\newcommand{\Fut}{{\rm Fut}}
\newcommand{\sddb}{{\sqrt{-1}\partial\bar{\partial}}}
\newcommand{\bL}{{\bf L}}
\newcommand{\vol}{{\rm vol}}
\newcommand{\ord}{{\rm ord}}
\newcommand{\lct}{{\rm lct}}
\newcommand{\vphi}{\varphi}
\newcommand{\bC}{{\mathbb{C}}}
\newcommand{\tr}{{\rm tr}}
\newcommand{\bP}{{\mathbb{P}}}
\newcommand{\cE}{{\mathcal{E}}}
\newcommand{\NA}{{\rm NA}}
\newcommand{\cJ}{{\mathcal{J}}}
\newcommand{\cX}{{\mathcal{X}}}
\newcommand{\cL}{{\mathcal{L}}}
\newcommand{\bQ}{{\mathbb{Q}}}
\newcommand{\reg}{{\rm reg}}
\newcommand{\bR}{{\mathbb{R}}}
\newcommand{\CM}{{\rm CM}}
\newcommand{\cH}{{\mathcal{H}}}
\newcommand{\cO}{{\mathcal{O}}}
\newcommand{\cF}{{\mathcal{F}}}
\newcommand{\bZ}{{\mathbb{Z}}}
\newcommand{\bD}{{\mathbb{D}}}
\newcommand{\sing}{{\rm sing}}
\newcommand{\bN}{{\mathbb{N}}}
\newcommand{\la}{\langle}
\newcommand{\ra}{\rangle}
\newcommand{\cY}{\mathcal{Y}}
\newcommand{\bfL}{{\bf L}}
\newcommand{\bI}{{\bf I}}
\newcommand{\mes}{{\rm mes}}
\newcommand{\bfI}{{\bf I}}
\newcommand{\bd}{{\rm bd}}
\newcommand{\mcZ}{\mathcal{Z}}
\newcommand{\mcL}{\mathcal{L}}
\newcommand{\mcQ}{\mathcal{Q}}
\newcommand{\mcY}{\mathcal{Y}}
\newcommand{\bfE}{{\bf E}}
\newcommand{\bfJ}{{\bf J}}
\newcommand{\bfD}{{\bf D}}
\newcommand{\bfF}{{\bf F}}
\newcommand{\mcH}{\mathcal{H}}
\newcommand{\mcE}{\mathcal{E}}
\newcommand{\bfH}{{\bf H}}
\newcommand{\bfM}{{\bf M}}
\newcommand{\Aut}{{\rm Aut}}
\newcommand{\mcI}{\mathcal{I}}
\newcommand{\PSH}{{\rm PSH}}
\newcommand{\mcJ}{\mathcal{J}}
\newcommand{\mcO}{\mathcal{O}}
\newcommand{\bfT}{{\bf T}}
\newtheorem{thm}{Theorem}[section]
\newtheorem{prop}[thm]{Proposition}
\newtheorem{defn}[thm]{Definition}
\newtheorem{cor}[thm]{Corollary}
\newtheorem{rem}[thm]{Remark}
\newtheorem{exmp}[thm]{Example}
\newtheorem{lem}[thm]{Lemma}
\begin{document}

\title{The uniform version of Yau-Tian-Donaldson conjecture for singular Fano varieties}
\author{Chi Li, Gang Tian, Feng Wang}
\date{}

\maketitle

\abstract{
We prove the following result: if a $\bQ$-Fano variety is uniformly K-stable, then it admits a K\"{a}hler-Einstein metric. This proves the uniform version of Yau-Tian-Donaldson conjecture for all (singular) Fano varieties with discrete automorphism groups.
We achieve this by modifying Berman-Boucksom-Jonsson's strategy in the smooth case with appropriate perturbative arguments. This perturbation approach depends on the valuative criterion and non-Archimedean estimates, and is motivated by our previous paper.
}

\tableofcontents

\section{Introduction}

A Fano variety is defined to be a normal projective variety $X$ such that its anticanonical divisor $-K_X$ is an ample $\bQ$-Cartier divisor. K-(poly)stability of Fano varieties was introduced by Tian in \cite{Tia97} and reformulated more algebraically by Donaldson \cite{Don02}. The Yau-Tian-Donaldson (YTD) conjecture states that a smooth Fano manifold $X$ admits a K\"{a}hler-Einstein metric if and only if $X$ is K-polystable. 
Due to many people's work, this conjecture has been proved (see \cite{Tia97, Berm15, CDS15, Tia15}). 

In this paper, we are interested in the generalized Yau-Tian-Donaldson conjecture meaning that $X$ is allowed to be singular. There are some previous works \cite{SSY16, LWX14} and \cite{LTW17} on extending the YTD conjecture to special classes of singular Fano varieties.
Berman's work in \cite{Berm15} shows that the ``only if" part of the conjecture is indeed true for any log Fano pair. For the ``if" part,  Note that, by \cite{Oda13}, a Fano variety $X$ being K-semistable implies that $X$ has at worst klt singularities (see also \cite{LX14}). Fano varieties with at worst klt singularities will be called $\bQ$-Fano varieties.

When $X$ has a discrete automorphism group, K-polystability is also called K-stability. In this case, the notion of uniform K-stability as defined in \cite{BHJ17, Der16} is the algebraic correspondent to the properness of Mabuchi energy and is a prior a strengthening of the K-stability condition. 
The uniform K-stability is actually conjectured to be equivalent to K-stability. This is known in the smooth case through the solution of Yau-Tian-Donaldson conjecture.
Uniform K-stability has recently been studied extensively. For example Fujita (\cite{Fuj17a}) proved that there is a nice valuative criterion for uniform K-stability (see also \cite{BlJ17, BoJ18b, FO16}), and moreover, uniformly K-stable Fano varieties with a fixed volume are parametrized by a good moduli stack (see \cite{BX18}).

In this paper, we will in fact deal with the more general case of log-Fano pairs $(X, D)$ (see Definition \ref{defn-logFano}) and prove the following main result:
\begin{thm}\label{thm-main}
Assume that a log-Fano pair $(X, D)$ is uniformly K-stable. Then the Mabuchi energy of $(X, D)$ is proper over the space of finite energy $\omega$-psh potentials for any fixed reference smooth K\"{a}hler metric $\omega$. 
\end{thm}
By the work of \cite{BBEGZ, DR17, Dar17, DNG18}, if $(X, D)$ has a discrete automorphism group, then $(X, D)$ has a K\"{a}hler-Einstein metric if and only if the Mabuchi energy is proper over the space of finite energy K\"{a}hler metrics (see Theorem \ref{thm-analytic}).
Moreover, the latter condition indeed implies that $(X, D)$ is uniformly K-stable (see \cite{Berm15, BHJ19}). So we get the following version of Yau-Tian-Donaldson conjecture.
\begin{cor}
Assume that a log-Fano pair $(X, D)$ has a discrete automorphism group. Then $(X, D)$ has a K\"{a}hler-Einstein metric if and only if $(X, D)$ is uniformly K-stable.
\end{cor}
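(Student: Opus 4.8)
The plan is to prove Theorem \ref{thm-main} by following the Berman–Boucksom–Jonsson variational strategy for the YTD correspondence, but replacing the steps where smoothness of $X$ is used by perturbative arguments. Recall the overall logic: uniform K-stability is an estimate on non-Archimedean functionals over the space of test configurations (equivalently, over non-Archimedean metrics on $-K_X - D$), namely $\bfM^{\NA}(\phi) \geq \epsilon \, \bfJ^{\NA}(\phi)$ for some $\epsilon > 0$; properness of the Mabuchi energy is the analogous estimate $\bfM(\vphi) \geq \epsilon' \, \bfJ(\vphi) - C$ over the space $\mcE^1$ of finite-energy K\"{a}hler potentials. The bridge between the two is that any finite-energy potential can be approximated by (the Archimedean realizations of) non-Archimedean metrics in a way that is asymptotically compatible with the relevant energy functionals — this is the content of the ``$\NA$ vs.\ Archimedean'' comparison, which for smooth Fano manifolds is in \cite{BBJ}.

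First I would set up the analytic and non-Archimedean formalism for the singular pair $(X,D)$: the space $\mcE^1(X, -K_X - D)$ of finite-energy metrics, the Monge–Amp\`ere energy $\bfE$, the functionals $\bfJ$, $\bfI - \bfJ$, the Ding functional $\bfD$ and the Mabuchi (K-energy) functional $\bfM$, together with their non-Archimedean counterparts $\bfE^{\NA}, \bfJ^{\NA}, \bfD^{\NA}, \bfM^{\NA}$ on the space of non-Archimedean metrics. Here one uses that $(X,D)$ being log-Fano means $X$ is klt, so the relevant measures (e.g.\ the adapted measure $e^{-\vphi}$ defining $\bfD$) have finite mass and the pluripotential theory of \cite{BBEGZ} applies. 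The key functional-analytic input is the lower semicontinuity and coercivity-type properties of $\bfM$ along decreasing/increasing sequences and the slope formula: for a test configuration $(\mcX, \mcL)$ with associated ray $\vphi_t$, one has $\lim_{t\to\infty} \bfM(\vphi_t)/t = \bfM^{\NA}(\mcX,\mcL)$, and similarly for $\bfJ$. The slope formula reduces the proof of properness to a statement purely about rays, but — and this is the crux — one must handle \emph{arbitrary} geodesic rays in $\mcE^1$, not just those coming from test configurations, which is why a direct reduction fails and one needs the quantitative approximation.

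The main steps, in order, are: (1) reduce properness of $\bfM$ to properness of the Ding functional $\bfD$, using $\bfM \geq \bfD$ on $\mcE^1$ and the fact that on a log-Fano pair the two have the same non-Archimedean slope and the same minimizers (Berman's work, extended to the singular setting); this replaces the delicate analysis of the entropy term $\bfH$ by the more tractable $\bfD$. (2) Prove the non-Archimedean inequality: uniform K-stability, $\bfM^{\NA} \geq \epsilon \bfJ^{\NA}$, together with $\bfD^{\NA} \geq \bfM^{\NA} - (\text{something small})$ or a direct argument, gives $\bfD^{\NA} \geq \epsilon' \bfJ^{\NA}$ on all non-Archimedean metrics (not just $\bQ$-test-configurations — one needs to pass to the completion, using that $\bfD^{\NA}$ and $\bfJ^{\NA}$ are continuous in the strong topology on non-Archimedean metrics, cf.\ \cite{BoJ18b}). (3) The approximation/perturbation step: given $\vphi \in \mcE^1$ with $\sup \vphi = 0$ and $\bfJ(\vphi)$ large, produce a sequence of Fubini–Study-type potentials $\vphi_m$ (from $L^2$-bases of $H^0(X, -m(K_X+D))$) with $\bfE(\vphi_m) \to \bfE(\vphi)$ and $\bfD(\vphi_m) \to \bfD(\vphi)$, and relate $\bfD(\vphi_m)$ to $\bfD^{\NA}$ of an associated non-Archimedean metric via a quantization/Bergman-kernel estimate. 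This is where the singularities bite: the usual Bergman kernel asymptotics $\rho_m \to 1$ hold only away from the singular locus and with weaker uniformity, so the convergence of $\bfD(\vphi_m)$ must be established using the partial $C^0$-estimate / Hörmander-type $L^2$ techniques valid on klt pairs, or — following the paper's stated strategy — by \emph{perturbing} $(X,D)$ to nearby pairs for which the estimates are available (e.g.\ perturbing the boundary $D$, or passing to a log resolution and perturbing the coefficients to stay in the klt range) and controlling the error in all functionals under the perturbation. (4) Combine: $\bfD(\vphi) = \lim \bfD(\vphi_m) \geq \lim (\epsilon' \bfJ(\vphi_m) - C) = \epsilon' \bfJ(\vphi) - C$, which is properness of $\bfD$, hence of $\bfM$.

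The hard part, as the abstract signals, will be step (3): making the perturbative argument uniform. Specifically, one must show that the non-Archimedean inequality from uniform K-stability of $(X,D)$ degrades in a controlled, quantitative way when one perturbs to $(X, D')$ or to a resolution $(Y, D_Y)$ — i.e.\ that uniform K-stability is, in an effective sense, an open/robust condition along the particular perturbation used — and simultaneously that the analytic functionals $\bfD, \bfJ$ on the perturbed pair converge back to those on $(X,D)$ as the perturbation parameter goes to zero. Controlling the two limits (perturbation parameter $\to 0$ and quantization parameter $m \to \infty$) \emph{jointly}, with the K-stability constant $\epsilon$ surviving, is the delicate estimate; it requires care with the CM/Ding weight under base change and with the non-Archimedean pluripotential estimates (the ``non-Archimedean estimates'' referenced in the abstract), e.g.\ comparing $\bfE^{\NA}$ and $\bfJ^{\NA}$ across the perturbation via intersection-theoretic formulas and bounding the defect by the perturbation size times a volume. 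Once this uniform perturbative comparison is in hand, the rest is an assembly of now-standard variational arguments.
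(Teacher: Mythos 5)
Your high-level plan — reduce the corollary to the implication "uniform K-stability $\Rightarrow$ properness of the Mabuchi energy" (the main theorem), then invoke the analytic criterion for KE existence on singular pairs (\cite{BBEGZ,Dar17,DNG18}) and the known converse direction (\cite{Berm15,BHJ19}) — matches the paper's deduction of the corollary. However, your proposed mechanism for the hard implication diverges substantively from what the paper actually does, and contains a gap that would block it.

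The paper works by contradiction: it extracts a destabilizing geodesic ray $\Phi$ in $\mcE^1(X,-K_X-D)$, pulls it back to a fixed log resolution $\mu\colon Y\to X$ with $L_\epsilon=\mu^*L+\epsilon P$, perturbs it to $\Phi_\epsilon=\mu^*\Phi+\epsilon\,\psi_P$, and then applies Demailly regularization and the multiplier-ideal construction of test configurations \emph{on the smooth $Y$}, not on $X$. The key is that one may then compare $\bfE^\NA$, $\bL^\NA$, and $\bfD^\NA$ across the two limits $m\to\infty$ and $\epsilon\to 0$ purely by elementary monotonicity and valuative estimates. Your step (3), by contrast, routes through Bergman kernel/$L^2$-basis quantization on $X$ itself, and you explicitly propose to control the error either by partial $C^0$-estimates on klt pairs or by "perturbing the coefficients to stay in the klt range." Neither fallback aligns with the paper. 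The paper is at pains to \emph{avoid} partial $C^0$-estimates and Cheeger--Colding--Tian theory — and for a general $\bQ$-Fano variety the partial $C^0$-estimate you would need is not available. More importantly, your phrase "stay in the klt range" is precisely what cannot be arranged: on the resolution $Y$ one has $K_Y=\mu^*(K_X+D)+\textstyle\sum a_kE_k$ and whenever some $a_j>0$ the boundary $B_\epsilon$ is necessarily \emph{non-effective}, so $(Y,B_\epsilon)$ is only a sub-klt pair. The paper's crucial technical observation (emphasized around Theorem \ref{thm-MvsD}) is that the Fujita/Boucksom--Jonsson valuative criterion and the non-Archimedean inequalities still hold for non-effective twistings; dropping that insight and trying to stay in the effective/klt regime breaks the argument exactly where the genuine singularities of $X$ appear. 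You also omit the convexity of the Mabuchi energy along geodesic segments joining metrics in $\hat{\cH}(X,\omega)$ (Proposition \ref{prop-Mabconv}), which the paper needs in order to run the destabilizing-ray construction over a singular $X$ and which occupies a full subsection; without it, the contradiction-ray construction is unjustified. Finally, your step (4) as written would require $\bfD(\vphi_m)\ge\epsilon'\bfJ(\vphi_m)-C$ for the FS-type potentials $\vphi_m$ directly from uniform K-stability, but stability controls only non-Archimedean slopes, not pointwise Ding values of individual potentials; one must pass through slope formulas along rays (Proposition \ref{prop-BHJslope}) and the contradiction setup, as the paper does, rather than quantizing a single potential.
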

We emphasize that the pair $(X, D)$ in the above result is allowed to have any klt singularities.
As mentioned above, by the resolution of YTD conjecture in the smooth case, the above results are known when $X$ is smooth and $D$ is zero.

The above theorem extends the work of Berman-Boucksom-Jonsson in \cite{BBJ15,BBJ18} to the class of singular $\bQ$-Fano varieties. 
Indeed our method of proof will be based on the strategy proposed by Berman-Boucksom-Jonsson in \cite{BBJ15}. In particular, we depend on various tools from pluripotential theory, non-Archimedean K\"{a}hler geometry and birational algebraic geometry, but without using Cheeger-Colding-Tian's theory and partial $C^0$-estimates in the original solution of the YTD conjecture.
However, as explained in \cite{BBJ18}, there are technical difficulties in applying their method directly when $X$ is singular (see section \ref{sec-BBJ}). Here we use some ideas of perturbative approach. 
 
 It is well known that to solve K\"{a}hler-Einstein metrics on singular varieties is equivalent to solve some degenerate Monge-Amp\`{e}re equation on a resolution of the variety (see \cite{EGZ09,BBEGZ}). It is natural to study such degenerate Monge-Am\`{e}re equation using an appropriate sequence of non-degenerate Monge-Amp\`{e}re equations to approximate the original equation, which is the guiding principle in \cite{LTW17}. The perturbative approach used here is motivated by this idea. In fact we need to apply the appropriate perturbative method in every step of our argument. 

In the next section, we will first discuss Berman-Boucksom-Jonsson's variational approach to YTD and our previous work in \cite{LTW17} which uses perturbation arguments to prove YTD for a class of singular Fano varieties. These will serve as comparisons to our new argument to get the uniform version of YTD in the singular case, which we sketch in section \ref{sec-skmod} highlighting some new ingredients about convergence of non-Archimedean quantities. In section \ref{sec-pre}, we recall the preliminary materials on space of K\"{a}hler metrics on singular varieties which were developed in \cite{BBEGZ, Dar17, DNG18}. We state 
the analytic criterion for the existence of KE metrics on singular Fano varieties as studied by Darvas and Di-Nezza-Guedj, and slightly refine it by using the approximation argument of Berman-Darvas-Lu. The reason for doing this is the observation that the argument to get KE on singular Fano varieties would be easier if we know that the properness over Mabuchi energy over the space of smooth K\"{a}hler potentials implies the existence of KE (see Remark \ref{rem-smoothproper} and \ref{rem-smoothconvex}). Since the latter is not known, we need to work more in section \ref{sec-convex} to get the convexity of Mabuchi energy along geodesic segments connecting less regular positively curved Hermitian metrics.
In section \ref{sec-TC}, we recall the definitions of K-stability and its equivalent Ding stability. In section \ref{sec-valuative}, we will recall  the non-Archimedean formulation of stability conditions and the valuative criterion for uniform stability. We also observe that the valuative criterion still works when the boundary divisor is non-effective at least when the ambient space is smooth. In section \ref{sec-proof}, we prove our main results by following the steps as sketched in section \ref{sec-skmod}.

The results of this paper have been generalized to the general case when $\Aut(X)_0$ is reductive but not necessarily discrete (\cite{Li19}, see also \cite{His19}), and to more general equations of twisted KE's and generalized solitons (see \cite{HL20}). 

\vskip 3mm
\noindent
{\bf Acknowledgement:} 
C. Li is partially supported by NSF (Grant No. DMS-1810867) and an Alfred P. Sloan research fellowship. 
G. Tian is partially supported by
NSF (Grant No. DMS-1607091) and NSFC (Grant No. 11331001). F. Wang is partially supported by NSFC (Grant No.11501501).
The first author would like to thank S. Boucksom, M. Jonsson and L. Lempert for helpful conversations, and Y. Liu, C. Xu and M. Xia for useful comments. We would like to thank R. Berman, T. Darvas for communications that help our proof of the convexity of Mabuchi energy, and Di Nezza and V. Guedj for clarifications on regularity of geodesics. We would also like to thank anonymous referees for helpful suggestions on improving the paper.

\section{Discussion of proofs}

In this section, we first sketch and discuss the variational approach of Berman-Boucksom-Jonsson (BBJ) \cite{BBJ15, BBJ18} and the perturbative approach of Li-Tian-Wang (\cite{LTW17}). Then we sketch our proof which is a modification of BBJ's approach by instilling some perturbative idea and convergence results. In the following sketch we will only consider the case when the boundary divisor is empty. We will also use the equivalence of uniform K-stability and uniform Ding stability for any $\bQ$-Fano variety as proved in \cite{BBJ15, Fuj16}. See section \ref{sec-pre} for the notations used in the following sketch.

\subsection{Berman-Boucksom-Jonsson's approach}\label{sec-BBJ}
We first sketch Berman-Boucksom-Jonsson's proof of the smooth case of Theorem \ref{thm-main}. Assume a smooth Fano manifold $X$ is uniformly K-stable. They proved the properness of Mabuchi energy using a proof by contradiction. 
\begin{enumerate}
\item Step 1: Assume on the contrary that the Mabuchi energy $\bfM$ is not proper, then one can find a destabilizing geodesic ray $\Phi=\{\vphi(s)\}_{s\in [0, \infty)}$ in $\cE^1:=\cE^1(X, -K_X)$ such that 
\begin{enumerate}
\item
$\bfM$ and the Ding energy $\bfD$ are decreasing along $\Phi$. In particular, we have
\begin{equation}\label{eq-Dinfneg1}
\bfD'^\infty(\Phi):=\lim_{s\rightarrow+\infty} \frac{\bfD(\vphi(s))}{s}\le 0.
\end{equation}
\item With a smooth reference Hermitian metric $\psi_0\in \mcE^1$, we have the following normalization
\begin{equation}
\sup(\vphi(s)-\psi_0)=0, \quad  \bfE_{\psi_0}(\vphi(s))=- s.
\end{equation}
\end{enumerate}
\item Step 2: For $m\gg 1$, blow up the multiplier ideal sheaf $\cJ(m\Phi)$ to construct a sequence of semi-ample test configurations $\cX_m$ whose associated psh-ray and non-Archimedean metric will be denoted by $\Phi_m=\{\vphi_m(s)\}$ and $\Phi_m^\NA$.

Moreover, Demailly's regularization theorem (\cite[Proposition 3.1]{Dem92}) implies that $\Phi_m$ is less singular than $\Phi$. This together with the monotonicity of the $\bfE$ functional show that 
\begin{equation}\label{eq-sk1}
\bfE'^\infty(\Phi_m)=\lim_{s\rightarrow+\infty} \frac{\bfE(\vphi_m(s))}{s} \ge \lim_{s\rightarrow+\infty} \frac{\bfE(\vphi(s))}{s}=:\bfE'^\infty(\Phi)=-1.
\end{equation}
As noted in \cite[Corollary 6.7]{BBJ18}, this may a priori be a strict inequality without knowing that $\Phi$ is a maximal geodesic ray. 

\item Step 3: Prove the following expansion of $\bL$ energy along $\Phi$ by generalizing \cite{Berm15} and using the valuative tools from \cite{BFJ08}:
\begin{equation}
\lim_{s\rightarrow+\infty} \frac{\bL(\vphi(s))}{s}=\inf_{w\in W} (A_{X\times\bC}(w)-w(\Phi))-1=:\bL^\NA(\Phi^\NA),
\end{equation} 
where $W$ is the set of $\bC^*$-invariant divisorial valuations $w$ on $X\times\bC$ with $w(t)=1$.

Moreover, use Demailly's regularization result and definition of multiplier ideals to prove that:
\begin{equation}\label{eq-sk2}
\lim_{m\rightarrow+\infty} \bL^\NA(\Phi^\NA_m)=\bL^\NA(\Phi^\NA).
\end{equation}
\item Step 4: Combine \eqref{eq-Dinfneg1}-\eqref{eq-sk2} to prove that $\Phi$ contradicts the uniform Ding-stability of $X$, which is equivalent to the uniform K-stability.

\end{enumerate}

As pointed out in \cite{BBJ18}, a large part of the above arguments in \cite{BBJ18} still applies to singular $\bQ$-Fano varieties. 
The difficulty in the singular case lies essentially in applying Demailly's regularization directly on singular varieties. This regularization result is in general not true when the ambient space is singular and Berman-Boucksom-Jonsson suggested to find a replacement of this regularization result for singular varieties. The other difficulty may lie in the study of non-Archimedean spaces over singular varieties. Our main contribution will be to circumvent these difficulties.

\subsection{Perturbation approach of Li-Tian-Wang}
Theorem \ref{thm-main} has been proved in a special singular case in \cite{LTW17}, which we will recall in this subsection. Let $X$ be any $\bQ$-Fano variety. Take a log resolution $\mu: Y\rightarrow X$ such that the reduced exceptional divisor $\mu^{-1}(X^\sing)=\sum_{k=1}^g E_k$ is a simple normal crossing divisor. The klt condition allows one to write down the following identity:
\begin{equation}\label{eq-KY/X}
K_Y=\mu^*K_X+\sum_{k=1}^g a_k E_k=\mu^*K_X-\sum_{i=1}^{g_1} b_i E'_i+\sum_{j=g_1+1}^g a_j E''_j,
\end{equation}
where for $i=1,\dots, g_1$, $E'_i=E_i$, $b_i=-a_i\in [0,1)$; and for $j=g_1+1,\dots, g$, $a_j>0$ and $E''_j=E_j$.

It is well known (e.g. \cite[Lemma 2.2]{CMM17}) that we may and will assume that there exists a log resolution $\mu: Y\rightarrow X$ such that for some $\theta_k\in \bQ$ with $0<\theta\ll 1$, $k=1,\dots, g$
\begin{equation}\label{eq-Pep}
P:=\mu^*(-K_X)-\sum_{k=1}^g \theta_k E_k \quad \text{ is positive.}
\end{equation} 
We can then rewrite the identity \eqref{eq-KY/X} in the following way:
\begin{eqnarray}\label{eq-dec-K}
-K_Y
&=&\frac{1}{1+\epsilon}\left((1+\epsilon)\mu^*(-K_X)-\epsilon \sum_k \theta_k E_k\right)\nonumber \\
&&\hskip 3cm +\sum_i (b_i+\frac{\epsilon}{1+\epsilon}\theta_i)E'_i
-\sum_j (a_j-\frac{\epsilon}{1+\epsilon}\theta_j)E''_j\nonumber \\
&=&\frac{1}{1+\epsilon}L_\epsilon+B_\epsilon,
\end{eqnarray}
where for simplicity we introduced the following notations for any $\epsilon\ge 0$:
\begin{eqnarray}\label{eq-short}
L_\epsilon&:=&(1+\epsilon)\mu^*(-K_X)-\epsilon\sum_{k=1}^g \theta_k E_k=\mu^*(-K_X)+\epsilon P;  \nonumber \\
B^+_\epsilon&:=&\sum_{i=1}^{g_1} (b_i+\frac{\epsilon}{1+\epsilon}\theta_i)E'_i, \quad B^-_\epsilon:=\sum_{j=g_1+1}^g (a_j-\frac{\epsilon}{1+\epsilon}\theta_j)E''_j \nonumber \\
B_\epsilon&:=&B^+_\epsilon-B^-_\epsilon.
\end{eqnarray}
Note that $B^-_0=\sum_{j=g_1+1}^g a_j E_j=0$ if and only if $-1<a_k\le 0$ for any $k=1,\dots, g$. 
One intermediate result in \cite{LTW17} can be stated as follows:
\begin{thm}[{\cite[Theorem 4.11]{LTW17}}]\label{thm-LTW1}
Let $X$ be a $\bQ$-Fano variety. Assume that there is a log resolution $\mu: Y\rightarrow X$ satisfying both \eqref{eq-Pep} and $B^-_0=0$. If $X$ is uniformly K-stable, then there exists a K\"{a}hler-Einstein metric on $X$.
\end{thm}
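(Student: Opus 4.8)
The plan is to reduce the existence of a Kähler-Einstein metric on the singular variety $X$ to a perturbed problem on the resolution $Y$ that can be attacked by known techniques, and then pass to the limit $\epsilon \to 0$. First I would use the decomposition \eqref{eq-dec-K}: since $B^-_0 = 0$, for small $\epsilon > 0$ the divisor $B_\epsilon = B^+_\epsilon$ is effective with coefficients in $[0,1)$, so $(Y, B_\epsilon)$ is a genuine (sub-)klt log pair, and $-K_Y - B_\epsilon = \frac{1}{1+\epsilon} L_\epsilon$ with $L_\epsilon = \mu^*(-K_X) + \epsilon P$ ample on $Y$ by \eqref{eq-Pep}. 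Thus $(Y, B_\epsilon)$ with the polarization $L_\epsilon$ is a log-Fano pair in the usual sense, and solving the twisted/log Kähler-Einstein equation for $(Y, B_\epsilon, L_\epsilon)$ is a non-degenerate Monge-Ampère problem on a \emph{smooth} ambient space $Y$. The degenerate Monge-Ampère equation whose solution gives the KE metric on $X$ (in the sense of \cite{EGZ09, BBEGZ}) is the $\epsilon = 0$ limit of these equations.

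The key steps, in order, would be: (i) show that uniform K-stability of $X$ implies uniform Ding-stability (hence, by \cite{BBJ15, BBJ18} applied on the smooth ambient space $Y$, solvability of the log-KE equation) for the perturbed log-Fano pairs $(Y, B_\epsilon)$ polarized by $L_\epsilon$, for all sufficiently small rational $\epsilon > 0$ — this is a stability-comparison step, transferring the non-Archimedean functional inequalities from $X$ to $(Y, B_\epsilon, L_\epsilon)$ using \eqref{eq-dec-K} and the behavior of $A_X$, $\bfE$, $\bfL$ under $\mu$; (ii) invoke the smooth-ambient case of the main theorem (the BBJ variational argument, valid on $Y$ since Demailly regularization \emph{does} apply there) to produce, for each such $\epsilon$, a solution $\omega_\epsilon$ of the twisted KE equation on $(Y, B_\epsilon)$; (iii) establish uniform a priori estimates for the family $\{\omega_\epsilon\}$ — a uniform bound on energy functionals, a uniform $L^\infty$ (or Skoda-type integrability) estimate on the potentials — using the uniform stability from step (i) to get properness constants independent of $\epsilon$; (iv) pass to the limit $\epsilon \to 0$ using pluripotential-theoretic compactness (convergence of the $\omega_\epsilon$ in $\cE^1$ or in capacity), and identify the limit with a finite-energy solution of the degenerate Monge-Ampère equation downstairs, which by \cite{BBEGZ} is exactly a KE metric on $X$.

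The main obstacle will be step (iii): obtaining estimates on the perturbed solutions that are \emph{uniform in} $\epsilon$. The polarizations $L_\epsilon$ degenerate to the pullback $\mu^*(-K_X)$, which is only semi-ample (big and nef, contracting the exceptional locus), so the reference forms degenerate; moreover the twisting divisors $B_\epsilon$ vary, and one must control the integrability of $e^{-\varphi_\epsilon}$ against the degenerating measures. The way to handle this is to track the stability thresholds: uniform K-stability of $X$ gives a uniform coercivity constant $\delta > 1$, and the content of step (i) is precisely that the corresponding non-Archimedean energy inequality for $(Y, B_\epsilon, L_\epsilon)$ holds with a constant bounded below uniformly in $\epsilon$ — this is where the specific shape of the decomposition \eqref{eq-dec-K}, with $L_\epsilon = \mu^*(-K_X) + \epsilon P$ and the $\epsilon$-linear correction to $B_\epsilon$, is essential, because the error terms are $O(\epsilon)$. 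Granting that, the Monge-Ampère estimates of Kołodziej-type (in the form of \cite{EGZ09}) give $L^\infty$ bounds depending only on the stability constant and on $\int_Y L_\epsilon^n$, both of which are uniformly controlled, and the limit can be taken. A secondary technical point is to verify that the limit of twisted-KE potentials on $Y$ descends to a bona fide finite-energy potential on $X$ rather than acquiring extra singularities along $\supp(\sum E_k)$; this is controlled by the fact that the $B^+_\epsilon$-coefficients stay strictly below $1$, which is exactly the hypothesis $B^-_0 = 0$ entering again, guaranteeing klt-ness is preserved in the limit.
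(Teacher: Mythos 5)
Your proposal is structurally the same as the paper's sketch of the argument from [LTW17]: perturb to the log Fano pairs $(Y, B_\epsilon)$ on the resolution (where $B_\epsilon = B^+_\epsilon \geq 0$ is klt precisely because $B^-_0 = 0$), transfer uniform stability from $X$ to $(Y, B_\epsilon)$ via the valuative criterion, apply the smooth-ambient BBJ theory on $Y$ to solve the conical KE equation for each $\epsilon$, and pass to the limit. Steps (i) and (ii) in your proposal match the first two steps of the paper's sketch exactly.

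Where you diverge is in the mechanism for step (iii)--(iv). The paper's sketch says that LTW17 proves the convergence of $\varphi_\epsilon$ by ``comparing energy functionals on $X$ and $(Y, B_\epsilon)$ with some rescaling argument and using uniform Sobolev constants of K\"ahler-Einstein metrics with edge cone singularities'' --- a genuinely Riemannian-geometric/Moser-iteration technique for the conical metrics $\omega_\epsilon$. You instead propose to get uniform $L^\infty$ bounds from Ko\l odziej-type pluripotential estimates. These are different routes, and your version is underspecified in a way that hides the real difficulty. The claim that the $L^\infty$ estimate ``depends only on the stability constant and on $\int_Y L_\epsilon^n$'' is not correct as stated: uniform coercivity of $\bfM_\epsilon$ gives a uniform bound on $\bfJ_{\psi_\epsilon}(\varphi_\epsilon)$, not directly a sup-norm bound, and the Ko\l odziej argument needs, in addition, uniform control of the $L^p$-norm ($p>1$) of the density $e^{-\varphi_\epsilon/(1+\epsilon)}/|s_{B_\epsilon}|^2$ against a \emph{fixed} non-degenerate reference, whereas the reference class $L_\epsilon$ is collapsing onto the big-but-not-K\"ahler class $\mu^*(-K_X)$. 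The constants in the degenerate Ko\l odziej/EGZ estimate depend on how $L_\epsilon$ approaches the boundary of the K\"ahler cone, and one must also control the entropy bound coming from properness in a way that is compatible with the rescaling by $1+\epsilon$ of the class. These are exactly the issues the ``rescaling argument'' in the paper's sketch is designed to handle, and that you treat as routine. A version of your pluripotential route may well be made to work (and would in fact be a cleaner argument than the Sobolev-constant route), but as written step (iii) contains a real gap.

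One more remark: you correctly flag the need to verify that the limiting potential descends to $X$ without acquiring Lelong numbers along $\sum E_k$, and you correctly attribute this to the uniform bound $b_i + \frac{\epsilon}{1+\epsilon}\theta_i < 1 - \delta$, which is where the hypothesis $B^-_0 = 0$ is being used. This part of your argument is consistent with the paper.
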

Let's very briefly sketch the proof of the above result:
\begin{enumerate}
\item Prove that $(Y, B_\epsilon)$ is uniformly K-stable or equivalently uniformly Ding-stable. Note that by assumption $B_\epsilon=B^+_\epsilon\ge 0$ and $(Y, B_\epsilon)$ is a klt pair.
This is achieved by using the valuative criterion of uniform K-stability by Fujita.
\item Adapt Berman-Boucksom-Jonsson's result to the logarithmic setting to prove that the Mabuchi energy of $(Y, B_\epsilon)$ is proper with slope constants that are uniform with respect to $\epsilon$.
This in particular implies that there exists a K\"{a}hler-Einstein metric $\omega_\epsilon:=\sddb \vphi_\epsilon$ with edge cone singularities (see \cite{JMR16, GP16}) on the klt pair $(Y, B_\epsilon)$ where $e^{-\vphi_\epsilon}$ is an Hermitian metric on $L_\epsilon=-(1+\epsilon)(K_Y+B_\epsilon)$.
\item Prove the convergence of $\vphi_\epsilon$ as $\epsilon\rightarrow 0^+$ by proving uniform estimates by comparing energy functionals on $X$ and $(Y, B_\epsilon)$ with some rescaling argument and using uniform Sobolev constants of K\"{a}hler-Einstein metrics with edge cone singularities.
\end{enumerate}
There are also serious difficulties in this perturbative approach for the general singular case. Indeed if $B^-_\epsilon>0$, then $B_\epsilon$ is not effective. Any K\"{a}hler-Einstein metric on the {\it ineffective pair} $(Y, B^+_\epsilon-B^-_\epsilon)$, if it exists, would have edge cone singularities of cone angles bigger than $2\pi$ along ${\rm supp}(B_\epsilon^-)$. It is still not clear how to adapt Berman-Boucksom-Jonsson's variational approach to construct such a singular K\"{a}hler-Einstein metric. 
For example, when $\epsilon>0$, the Mabuchi energy is not known to be convex along geodesics (since the twisting is non-effective). Even if one could do so, several analytic and geometric tools in our original arguments are missing for such singular K\"{a}hler metrics. However, we should point out that the advantage of this perturbative approach is that it allows us to further
combine Cheeger-Colding-Tian's theory (extended in the edge cone situation by Tian-F. Wang) and partial $C^0$-estimates for conical K\"{a}hler-Einstein metrics to get a full (K-polystable) version of Yau-Tian-Donaldson conjecture in the special singular class.

\subsection{Perturbing BBJ's argument}\label{sec-skmod}

We now sketch the argument in our proof of Theorem \ref{thm-main}. We will use the above notations (and notations from section \ref{sec-pre}) and prove by contradiction.
So assume that the $\bQ$-Fano variety $X$ is uniformly K-stable. 
\begin{enumerate}
\item Step 1:Assume on the contrary that the Mabuchi energy $\bfM$ is not proper. We first prove by perturbative approach that the Mabuchi energy is convex along appropriate geodesic segments. Then we find a destabilizing geodesic ray $\Phi=\{\vphi(s)\}_{s\in [0, \infty)}$ in $\cE^1:=\cE^1(X, -K_X)$ such that 
\begin{enumerate}
\item
$\bfM$ and the Ding energy $\bfD$ are decreasing along $\Phi$. In particular, we have
\begin{equation}\label{eq-Dinfneg}
\bfD'^\infty(\Phi):=\lim_{s\rightarrow+\infty} \frac{\bfD(\vphi(s))}{s}\le 0.
\end{equation}
\item With a smooth Hermitian metric $\psi_0\in \mcE^1$, we have the following normalization
\begin{equation}
\sup(\vphi(s)-\psi_0)=0, \quad  \bfE_{\psi_0}(\vphi(s))=- s.
\end{equation}
\end{enumerate}

\item Step 2: 
Fix a log resolution $\mu: Y\rightarrow X$ satisfying \eqref{eq-Pep}.
Consider the psh ray on $L_\epsilon=\mu^*(-K_X)+\epsilon P$ given by $$\Phi_\epsilon=\mu^*\Phi+\epsilon p'^*_1\psi_P$$
where $\psi_P$ is a smooth Hermitian metric on $P=\mu^*(-K_X)-\sum_{k=1}^g \theta_k E_k$ whose curvature is a smooth K\"{a}hler form, and $p'_1: Y\times\bC\rightarrow Y$ is the projection.
Blow up the multiplier ideal sheaf $\cJ(m \Phi_\epsilon)$ to construct test configurations $(\cY_{\epsilon,m},\cL_{\epsilon,m})$ of $(Y, L_\epsilon)$ whose associated psh-ray and non-Archimedan metric are denoted by $\Phi_{\epsilon,m}$ and $\Phi^\NA_{\epsilon,m}$.

Demailly's regularization result on $Y$ implies that (see \eqref{eq-ENAepmlb}):
\begin{eqnarray}\label{eq-sk1'}
\bfE'^\infty_{L_\epsilon}(\Phi_{\epsilon,m})=\lim_{s\rightarrow+\infty} \frac{\bfE(\vphi_{\epsilon,m}(s))}{s} \ge \lim_{s\rightarrow+\infty}\frac{\bfE(\vphi_{\epsilon}(s))}{s}=:\bfE'^\infty_{L_\epsilon}(\Phi_{\epsilon}).
\end{eqnarray}
Moreover, we prove the following convergence (see \eqref{eq-limENAep}):
\begin{equation}\label{eq-sklimEep}
\lim_{\epsilon\rightarrow 0}\bfE'^\infty_{L_\epsilon}(\Phi_{\epsilon})=\bfE'^\infty(\Phi)=-1.
\end{equation}

\item Step 3: Prove an expansion of $\bL_{(Y, B_\epsilon)}$ along any psh ray on $(Y, L_\epsilon)$ by adapting the proof in \cite{BBJ15, BBJ18} (see Proposition \ref{prop-LBexpan}):
\begin{equation}
\lim_{s\rightarrow+\infty} \frac{\bL_{(Y, B_\epsilon)}(\vphi_\epsilon(s))}{s}=\bL^\NA_{(Y, B_\epsilon)}(\Phi^\NA_\epsilon).
\end{equation}

Use Demailly's regularization on $Y$ to prove (see \eqref{eq-limLNAm}):
\begin{equation}\label{eq-sk2'}
\lim_{m\rightarrow+\infty} \bL^\NA_{(Y, B_\epsilon)}(\Phi^\NA_{\epsilon,m})=\bL^\NA_{(Y, B_\epsilon)}(\Phi^\NA_\epsilon).
\end{equation}

Moreover we prove the following convergence (see \eqref{eq-limLNAep}):
\begin{equation}\label{eq-sklimLep}
\lim_{\epsilon\rightarrow 0} \bL^\NA_{(Y, B_\epsilon)}(\Phi^\NA_\epsilon)=\bL^\NA(\Phi^\NA).
\end{equation}

\item Step 4: Prove that the uniform K-stability of $X$ implies the uniform Ding-stability of $(Y, B_\epsilon)$ for $0<\epsilon\ll 1$ where $B_\epsilon$ is the not-necessarily effective $\bQ$-divisor in \eqref{eq-short}.

\item Step 5: Combine \eqref{eq-sk1'}-\eqref{eq-sklimLep} to prove that $\Phi_\epsilon$ contradicts the uniform Ding-stability of $(Y, B_\epsilon)$ for $0<\epsilon\ll 1$.

\end{enumerate}

Although the general strategy is in Berman-Boucksom-Jonsson's framework, the details are a lot more technical. On the other hand, we will only use Demailly's regularization on the smooth $Y$. Moreover, the perturbative part is indispensable. In particular, the convergences in the new arguments \eqref{eq-sklimEep} and \eqref{eq-sklimLep} are crucial, and the Step 4 is directly analogous to the first step in \cite{LTW17}.
The idea of using perturbative approach here is suggested by our previous work in \cite{LTW17}. However, instead of working with the energy functional on the space of K\"{a}hler metrics as in \cite{LTW17}, we will be working more on the non-Archimedean side, which is more flexible in some sense due to the birational-nature of the valuative criterions developed in \cite{BoJ18b, Fuj16}. Equally important in our arguments is the observation that some of the non-Archimedean arguments in \cite{BoJ18b, BBJ18} work well for the non-effective twisting at hand.

\section{Preliminaries}\label{sec-pre}

\subsection{Space of K\"{a}hler metrics over singular varieties}
Let $Z$ be an $n$-dimensional normal projective variety and $Q$ a Weil divisor that is not necessarily effective. Assume that $L$ is an ample $\bQ$-Cartier divisor. Choose a smooth Hermitian metric $e^{-\psi}$ on $L$ with a smooth semi-positive curvature form $\omega=\sddb\psi\in 2\pi c_1(L)$. 

Recall that  a function $u: Z\rightarrow [-\infty, +\infty)$ is $\omega$-plurisubharmonic ($\omega$-psh for short) if $u+\psi$  is a plurisubharmonic function for each local potential $\psi$ of $\omega$ (see \cite[1.1]{BBEGZ}). 
We will use the following spaces:
\begin{align}
&{\rm PSH}(\omega):=\PSH(Z, \omega)=\left\{u: Z\rightarrow [-\infty, +\infty); u \text{ is $\omega$-psh} \right\};\\
&\mcH(\omega):=\mcH(Z, \omega)={\rm PSH}(\omega)\cap C^\infty(Z);\\
&{\rm PSH}_\bd(\omega):=\PSH_\bd(Z, \omega)=\PSH(\omega)\cap \{\text{bounded functions on } Z\};\\
&{\rm PSH}(L):={\rm PSH}([\omega]):=\left\{\vphi=\psi+u; u\in {\rm PSH}(\omega)\right\};\\
&{\rm PSH}_\bd(L):={\rm PSH}_\bd([\omega]):=\left\{\vphi=\psi+u; u\in {\rm PSH}_\bd(\omega)\right\}.
\end{align}
Note that ${\rm PSH}([\omega])$ is equal to the space of positively curved (possibly singular) Hermitian metrics $\{e^{-\vphi}=e^{-\psi-u}\}$ on the $\bQ$-line bundle $L$. Rigorously $\psi+u$ is not a globally defined function, but rather a collection of local psh functions that satisfy the obvious compatible condition with respect to the transition functions of the $\bQ$-line bundle. However for the simplicity of notations, we will abuse this notation.

We have weak topology on ${\rm PSH}(\omega)$ which coincides with the $L^1_{loc}$-topology with respect to the smooth volume form $\omega^n$. 
If $u_j$ converges to $u$ weakly, then $\sup(u_j)\rightarrow \sup(u)$ by Hartogs' lemma for plurisubharmonic functions (see \cite[1.4]{GZ16}).
\begin{prop}[{\cite[Corollary C]{CGZ13}}]\label{prop-smapp}
For any $u\in \PSH(Z, \omega)$ there exists a sequence of smooth functions $u_j\in \PSH(Z, \omega)$ which decrease pointwise on $Z$ so that $\lim_{j\rightarrow+\infty}u_j=u$ on $Z$.
\end{prop}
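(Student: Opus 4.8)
The statement to prove is Proposition \ref{prop-smapp}, attributed to \cite[Corollary C]{CGZ13}: for any $u\in\PSH(Z,\omega)$ there is a sequence of \emph{smooth} $\omega$-psh functions $u_j$ decreasing pointwise to $u$ on all of $Z$. Since $Z$ is a possibly singular normal projective variety, the standard smooth regularization on manifolds does not apply directly, and the point is precisely to get genuine smoothness across $Z^{\sing}$.

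\textbf{Plan of proof.} The plan is to bootstrap from a decreasing approximation by \emph{continuous} (or just bounded) $\omega$-psh functions to one by smooth ones. First I would embed $Z$ into a projective space $\cP^N$ using a sufficiently high multiple of $L$ (and replace $\omega$ by the restriction of a multiple of the Fubini--Study form in the same class, absorbing the difference into a smooth potential), so that $Z\subset\cP^N$ is a closed subvariety and $\omega$-psh functions on $Z$ are restrictions of quasi-psh functions in an ambient class. The key classical input is the global regularization of quasi-psh functions on the smooth ambient space $\cP^N$: by Demailly's regularization (the analogue of \cite[Proposition 3.1]{Dem92}) together with the Richberg-type smoothing, any $\omega_{\cP^N}$-psh function with analytic singularities, or more generally any such function that is already continuous, can be approximated from above by a \emph{decreasing} sequence of smooth $\omega_{\cP^N}$-psh functions up to an arbitrarily small positive error; on $\cP^N$ the curvature is strictly positive so one has room to correct the $\epsilon$-loss. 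Restricting to $Z$ then gives smooth $\omega$-psh functions on $Z$.

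\textbf{Key steps in order.} (1) Reduce to the case where $u$ is bounded, indeed continuous on $Z$: given arbitrary $u\in\PSH(Z,\omega)$, the truncations $u^{(k)}:=\max(u,-k)$ are bounded $\omega$-psh and decrease to $u$, so it suffices to smooth each bounded one and then diagonalize. To pass from bounded to continuous, use a global version of the Bedford--Taylor/Richberg construction on the singular space: convolve-and-maximize locally, or invoke that on $Z$ bounded $\omega$-psh functions are decreasing limits of \emph{continuous} $\omega$-psh functions (this is where one needs $Z$ normal and projective, so that local psh functions extend and the gluing by $\max$ works). (2) For $v$ a continuous $\omega$-psh function on $Z$, extend the situation to the ambient $\cP^N$: by a Mergelyan/Sibony-type argument or by the projective embedding one writes $v$ as the restriction of a continuous $\theta$-psh function on $\cP^N$ for a Kähler form $\theta$ with $\theta|_Z$ cohomologous to $\omega$. (3) Apply smooth regularization on $\cP^N$: get smooth $\theta$-psh $V_j\downarrow$ to (something close to) the ambient function; using strict positivity of $\theta$ and a small shift $\epsilon_j\downarrow0$ one arranges $V_j+\epsilon_j\rho$ to be smooth, strictly $\theta$-psh, and still decreasing to $v$ after restriction, where $\rho$ is a smooth potential comparison term. (4) Restrict to $Z$: $u_j:=(V_j+\text{correction})|_Z$ is smooth $\omega$-psh on $Z$ and decreases to $v$; diagonalize over the truncation index $k$ to handle general $u$.

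\textbf{Main obstacle.} The hard part will be Step (2)–(3): controlling the regularization \emph{globally} and \emph{from above} in the prescribed cohomology class while producing honest smoothness on the singular $Z$. On a singular variety ``smooth'' should be understood as restriction of smooth functions from an ambient smooth space, and one must ensure the ambient regularization does not increase the singularities transversally to $Z$ and that the decreasing property survives restriction — this is exactly why one works in $\cP^N$ where the curvature form is strictly positive, giving the necessary slack to absorb the additive errors $\epsilon_j$ coming from Demailly's theorem. A secondary technical point is the reduction from general unbounded $u$ to continuous $u$: the truncation $\max(u,-k)$ is clean, but one should check the diagonal sequence can be taken genuinely decreasing in a single index, which follows by a standard extraction since each $u^{(k)}$ is itself a decreasing limit of smooth functions and the $u^{(k)}$ decrease in $k$. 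Modulo these points the result is the projective specialization of the general regularization theory, which is why it can be quoted from \cite{CGZ13}.
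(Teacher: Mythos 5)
The paper gives no proof here; it simply cites \cite[Corollary C]{CGZ13}. Your plan does reconstruct the central mechanism of that reference: embed $Z$ into $\bP^N$, extend the quasi-psh function to a quasi-psh function on the smooth ambient space, apply global regularization (Demailly, refined by B\l ocki--Ko\l odziej so there is no curvature loss on a K\"ahler manifold), and restrict back to $Z$. That is indeed how Corollary C is derived from the extension theorem (Theorem B) in CGZ13.

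However, two points in your plan are off and would make a write-up either incorrect or circular. First, the reduction in Step (1) to bounded and then continuous $\omega$-psh functions on $Z$ is both unnecessary and unjustified as stated: the claim that on the singular $Z$ every bounded $\omega$-psh function is a decreasing limit of \emph{continuous} $\omega$-psh ones is not an available black box — it is essentially of the same difficulty as the proposition itself, and invoking a ``Bedford--Taylor/Richberg construction on the singular space'' does not resolve it. In CGZ13 no such reduction is made; the extension theorem is proved for arbitrary (possibly unbounded) $\omega|_Z$-psh functions directly, and Demailly regularization on the smooth ambient K\"ahler manifold likewise needs no continuity hypothesis. Dropping Step (1) entirely both shortens the argument and removes the gap. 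Second, the real content — your Step (2), the extension of a quasi-psh function from the subvariety $Z\subset\bP^N$ to a quasi-psh function on $\bP^N$ in a controlled class — is dismissed as a ``Mergelyan/Sibony-type argument,'' but this is precisely the nontrivial theorem being proved in CGZ13 (via Ohsawa--Takegoshi $L^2$ extension, Demailly's analytic-singularity regularization, and a gluing argument). If you are proving the proposition rather than quoting it, you must prove the extension step; if you are quoting it, then the whole proposition is CGZ13's Corollary C and the regularization-and-restriction scaffolding you build is already contained there. As written, your proposal has the right overall shape but adds a detour that doesn't hold and treats the heart of the matter as a known lemma.
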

For any $u\in \PSH(Z, \omega)$, define:
\begin{equation}
\omega_u^n:=\lim_{j\rightarrow+\infty}  {\bf 1}_{\{u>-j\}}\left(\omega+\sddb \max(u, -j)\right)^n.
\end{equation}
We will use the space $\mcE^1$ of finite energy $\omega$-psh functions (see \cite{GZ07}):
\begin{align}
&\mcE(\omega):=\mcE(Z, \omega)=\left\{u\in {\rm PSH}(Z, \omega); \int_Z \omega_u^n=\int_Z \omega^n\right\};\\
&\mcE^1(\omega):=\mcE^1(Z,\omega)=\left\{u\in \mcE(Z, \omega); \int_Z |u| \omega_u^n<\infty\right\};\\
&\mcE^1(L):=\mcE^1(Z, L)=\left\{\psi+u; u \in \mcE^1(Z, \omega) \right\}.
\end{align}
We have the inclusion $\PSH_\bd(\omega)\subset \mcE^1(\omega)$.

For any $\vphi\in {\rm PSH}([\omega])$ such that $\vphi-\psi\in \mcE^1(L)$, we have the following important functional: 
\begin{eqnarray}
\bfE(\vphi)&:=&\bfE_{\psi}(\vphi)=\frac{1}{n+1} \sum_{i=0}^n \int_Z (\vphi-\psi) (\sddb\psi)^{n-i}\wedge (\sddb\vphi)^{i}.\label{eq-Ephi}
\end{eqnarray}
Following \cite{BBEGZ}, we endow $\mcE^1$ with the strong topology. 
\begin{defn}
The strong topology on $\mcE^1$ is defined to as the coarsest refinement of the weak topology such that $\bfE$ is continuous.
\end{defn}
We will use the following monotone and rescaling property of 
$\bfE$ functional:
\begin{eqnarray}\label{eq-monotoneE}
\vphi_1\le \vphi_2 &\Longrightarrow& 
 \bfE(\vphi_1)\le \bfE(\vphi_2); \quad \bfE_{\lambda \psi}(\lambda \vphi)=\lambda^{n+1} \cdot \bfE_{\psi}(\vphi) \text{ for any } \lambda\in \bR_{>0}. \label{eq-rescaleE}
\end{eqnarray}

For any interval $I\subset \bR$, denote the Riemann surface $$\bD_I=I\times S^1=\{\tau\in \bC^*; s=\log|\tau|\in I\}.$$ 
\begin{defn}[{see \cite[Definition 1.3]{BBJ18}}]\label{defn-pshpath}
A $\omega$-psh path, or just the psh path, on an open interval $I$ is a map $U=\{u(s)\}: I\rightarrow \PSH(\omega)$ such that the $U(\cdot, \tau):=U(\log|\tau|)$ is a $p_1^*\omega$-psh function on $X\times \bD_I$. A psh ray (emanating from $u_0$) is a psh path on $(0, +\infty)$ (with $\lim_{t\rightarrow 0}u(s)=u_0$). Note in the literature, psh path (resp. psh ray) are also called subgeodesic (resp. subgeodesic ray). 

In the above situation, we also say that $\Phi(s)=\{\psi_0+u(s)\}$ is a psh path (resp. a psh ray). 
\end{defn}
We will use geodesics connecting bounded potentials. 
\begin{prop}[{\cite[Proposition 1.17]{DNG18}}]\label{prop-geod}
Let $u_0, u_1\in \PSH_\bd(\omega)$. Then  
\begin{equation}\label{eq-envelope}
U=\sup\left\{u; u\in \PSH(Z\times\bD_{[0,1]}, p_1^*\omega);\quad U\le u_{0,1} \text{ on } \partial (Z\times \bD_{[0,1]})\right\}.
\end{equation}
is the unique bounded $\omega$-psh function on $Z\times\bD_{[0,1]}$ that is the solution of the Dirichlet problem:
\begin{equation}
(\omega+\sddb U)^{n+1}=0 \text{ on } Z\times\bD_{[0,1]}, \quad U|_{Z\times\partial \bD_{[0,1]}}=u_{0,1}.
\end{equation}
\end{prop}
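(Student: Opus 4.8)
\textbf{Proof proposal for Proposition \ref{prop-geod}.}

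The plan is to prove this in two stages: first establish that the envelope $U$ defined by \eqref{eq-envelope} is a bounded $\omega$-psh function solving the homogeneous complex Monge-Amp\`ere equation with the prescribed boundary data, and then prove uniqueness of such solutions by a maximum-principle argument. For the first stage, I would begin by checking that the admissible set in \eqref{eq-envelope} is nonempty and that $U$ is bounded: since $u_0, u_1$ are bounded on $Z$, say $|u_{0,1}|\le C$, the constant $-C$ is a competitor (it is $p_1^*\omega$-psh and lies below $u_{0,1}$ on the boundary), giving $U\ge -C$; and the constant $C$ dominates $U$ on $\partial(Z\times\bD_{[0,1]})$, so by the maximum principle for psh functions on $Z\times\bD_{[0,1]}$ applied to each competitor, $U\le C$. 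Next, the usual argument shows that the upper semicontinuous regularization $U^*$ is again a competitor (it is $p_1^*\omega$-psh, and by continuity of $u_{0,1}$ in the $S^1$-direction together with a barrier argument near the two boundary circles $\{s=0\}$ and $\{s=1\}$ one checks $U^*\le u_{0,1}$ on the boundary), hence $U=U^*$ is itself $p_1^*\omega$-psh.

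To see that $U$ attains the boundary values and solves $(\omega+\sddb U)^{n+1}=0$: for the boundary values, the competitor $-C$ from below and an explicit $p_1^*\omega$-psh barrier of the form $A(\log|\tau|)(\log|\tau|-1)+u_{0,1}$-type construction (using that $u_{0,1}$ extends to a $p_1^*\omega$-psh function on a neighborhood, e.g. the constant path, and that $\log|\tau|(\log|\tau|-1)\le 0$ on $\bD_{[0,1]}$ vanishing on the boundary) sandwich $U$ so that $\lim_{s\to 0}U(s)=u_0$ and $\lim_{s\to 1}U(s)=u_1$ in the appropriate sense. For the Monge-Amp\`ere equation, this is the standard balayage/Perron envelope fact: $U$ is maximal among $p_1^*\omega$-psh functions with the given boundary data, and a maximal bounded $\omega$-psh function on a domain has vanishing non-pluripolar Monge-Amp\`ere measure — the verification is the classical one (if $(\omega+\sddb U)^{n+1}(\{U \text{ is not maximal locally}\})>0$ one produces a strictly larger competitor by local modification, contradicting the envelope property), and on the normal variety $Z\times\bD_{[0,1]}$ one works on the regular locus and uses that the singular locus is pluripolar hence negligible for the non-pluripolar product.

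For uniqueness: suppose $U_1, U_2$ are two bounded $p_1^*\omega$-psh solutions of the Dirichlet problem. The comparison principle for the complex Monge-Amp\`ere operator on bounded psh functions (valid on the normal variety by restricting to the smooth locus, which carries full mass) gives that if $(\omega+\sddb U_1)^{n+1}=(\omega+\sddb U_2)^{n+1}=0$ and $U_1=U_2$ on the boundary, then $U_1=U_2$: indeed $\int_{\{U_1<U_2\}}(\omega+\sddb U_2)^{n+1}\le \int_{\{U_1<U_2\}}(\omega+\sddb U_1)^{n+1}=0$ and symmetrically, so $U_1\le U_2$ and $U_2\le U_1$ off a pluripolar set, and by $\omega$-psh-ness everywhere. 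I would cite the comparison principle in the form available in the references \cite{BBEGZ, GZ07} rather than reprove it.

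The main obstacle I anticipate is \emph{regularity and boundary behavior at the two boundary circles}, i.e. making rigorous that $U$ genuinely takes the boundary values $u_{0,1}$ continuously enough that the Dirichlet problem is well-posed, given that $u_0, u_1$ are merely bounded (not continuous) $\omega$-psh functions — one typically only gets that $U(s)\to u_0$ (resp. $u_1$) in $L^1$ or in capacity as $s\to 0^+$ (resp. $1^-$), and the barrier construction must be done carefully using that bounded $\omega$-psh functions on $Z$ can be decreased to by smooth $\omega$-psh functions (Proposition \ref{prop-smapp}) and that the Riemann-surface factor $\bD_{[0,1]}$ provides a one-dimensional harmonic-type control. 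Since this is quoted verbatim from \cite[Proposition 1.17]{DNG18}, in the write-up I would reduce all of these points to citations and only spell out the envelope-is-maximal-hence-MA-free step and the comparison-principle uniqueness step in any detail.
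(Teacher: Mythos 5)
The paper does not prove this proposition; it cites it verbatim from \cite[Proposition 1.17]{DNG18}, so there is no ``paper's own proof'' to compare against. Your sketch is a correct outline of the standard Perron-envelope argument behind that citation: show the envelope is bounded, show its upper semicontinuous regularization is again a competitor (hence the envelope is $p_1^*\omega$-psh), deduce that the homogeneous Monge--Amp\`ere equation holds by maximality, and get uniqueness from the comparison principle, working on the regular locus of $Z$ and using that the singular locus is pluripolar. You correctly single out boundary attainment as the delicate point.

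Two small imprecisions worth fixing if you were to write this out. First, the constant $-C$ is only a competitor because $\omega\ge 0$; this should be stated explicitly. Second, your proposed barrier ``$A(\log|\tau|)(\log|\tau|-1)+u_{0,1}$'' is not quite a well-formed object on $Z\times\bD_{[0,1]}$; the clean choice is the lower barrier $\max\bigl(u_0(z)-As,\; u_1(z)-A(1-s)\bigr)$ with $s=\log|\tau|$ and $A\ge 2\sup|u_{0,1}|$, paired with the upper barrier $(1-s)u_0(z)+s\,u_1(z)$ obtained from subharmonicity in $\tau$ on the annulus for each fixed $z$. These two barriers both sandwich $U$ and show $U^*\le u_{0,1}$ on the boundary (so $U=U^*$) and that $U(\cdot,s)\to u_{0,1}$ uniformly as $s\to 0^+,1^-$. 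With that correction, the sketch is sound and is essentially what \cite{DNG18} do.
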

We will call $\Phi=\{\vphi(s)=\psi+U(\cdot, s)\}$ the geodesic segment joining $\vphi_0=\psi+u_0$ and $\vphi_1=\psi+u_1$.

For finite energy potentials $u_0, u_1\in \mcE^1(\omega)$, let $u^j_0, u^j_1$ be bounded smooth $\omega$-psh functions decreasing to $u_0, u_1$ (see Proposition \ref{prop-smapp}). Let $u_t^j$ be the bounded geodesic connecting $u^j_0$ to $u^j_1$. It follows from the maximum principle that $j\rightarrow u^j_{t}$ is non-increasing. Set:
\begin{equation}
u_t:=\lim_{j\rightarrow+\infty}u^j_t.
\end{equation}
Then $U=\{u_t\}$ is a finite-energy geodesic joining $u_0$ to $u_1$ as stated in the following result.
\begin{thm}[{\cite[Proposition 4.6]{DNG18}, \cite[Theorem 1.7]{BBJ18}}]
For any $u_0, u_1\in \mcE^1(\omega)$, the psh geodesic joining them exists, and defines a continuous map $U: [0,1]\rightarrow \mcE^1$ in the strong topology.
\end{thm}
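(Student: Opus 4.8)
The statement to prove is the existence and strong-continuity of the finite-energy psh geodesic joining arbitrary $u_0, u_1 \in \mcE^1(\omega)$, constructed as the decreasing limit of bounded geodesics $u_t^j$ connecting smooth decreasing approximants $u_0^j \searrow u_0$ and $u_1^j \searrow u_1$.

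\medskip

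\emph{Proof proposal.} The plan is to work directly with the explicit envelope formula \eqref{eq-envelope} and exploit monotonicity in the approximation parameter $j$. First, one checks that $\{u_t^j\}_j$ is a well-defined non-increasing sequence: each $u_t^j$ is the bounded geodesic from $u_0^j$ to $u_1^j$ given by Proposition \ref{prop-geod}, and since $u_0^{j+1}\le u_0^j$, $u_1^{j+1}\le u_1^j$, the sup-envelope characterization \eqref{eq-envelope} (or equivalently the maximum principle for the homogeneous Monge–Amp\`ere equation) immediately gives $u_t^{j+1}\le u_t^j$ for all $t\in[0,1]$. So the pointwise limit $u_t:=\lim_j u_t^j$ exists in $[-\infty,+\infty)$. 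To see it is not identically $-\infty$, one bounds $u_t^j$ from below: choosing bounded $\omega$-psh $v_i\le u_i$ with $v_i\in\mcE^1$ (e.g. canonical $j$-truncations $\max(u_i,-C)$ suffice once we only need a lower barrier on the slices $\tau\in\partial\bD$), the geodesic from $v_0$ to $v_1$ lies below every $u_t^j$, so $u_t\ge (\text{that geodesic})$, which is a genuine $\omega$-psh function on $Z\times\bD_{[0,1]}$. Hence $U=\{u_t\}$ is $p_1^*\omega$-psh on $Z\times\bD_{(0,1)}$ (decreasing limits of psh functions are psh, being not $\equiv-\infty$), and it satisfies the homogeneous complex Monge–Amp\`ere equation in the non-pluripolar / weak sense since each $u_t^j$ does and $u_t^j\searrow u_t$ (continuity of the mixed Monge–Amp\`ere operator along bounded-below decreasing sequences in $\mcE^1$, as in \cite{GZ07, BBEGZ}).

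\medskip

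Next, one identifies the boundary values and the energy behaviour. Along each slice one has $u_0^j\searrow u_0$ and $u_1^j\searrow u_1$, so $u_t\to u_0,u_1$ as $t\to 0,1$ at least in the decreasing pointwise sense; upper semicontinuous regularization and the maximum principle pin down $u_{0}=u_0$, $u_{1}=u_1$ as elements of $\mcE^1(\omega)$. The finite-energy claim follows from the fact that $\bfE$ is affine along bounded geodesics — $\bfE(u_t^j)=(1-t)\bfE(u_0^j)+t\,\bfE(u_1^j)$ — combined with continuity of $\bfE$ along the decreasing sequences $u_i^j\searrow u_i$ in $\mcE^1$ (monotone continuity of $\bfE$, \cite{BBEGZ, GZ07}); this gives $\bfE(u_t^j)\to (1-t)\bfE(u_0)+t\,\bfE(u_1)>-\infty$, and since $\bfE(u_t)\ge \limsup_j\bfE(u_t^j)$ by monotonicity one concludes $u_t\in\mcE^1(\omega)$ with $\bfE$ affine along $U$. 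In particular the energies $\bfE(u_t)$ are uniformly bounded on $[0,1]$, which prevents mass escaping to the pluripolar locus.

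\medskip

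Finally, for the strong continuity of $t\mapsto u_t$ on $[0,1]$: weak ($L^1$) continuity is standard for finite-energy geodesics (the slices of a bounded-energy $p_1^*\omega$-psh function on $Z\times\bD_{[0,1]}$ depend continuously on $s=\log|\tau|$ in $L^1$, using local regularity of psh functions and Hartogs' lemma). Upgrading to the strong topology on $\mcE^1$ amounts, by definition, to showing $\bfE(u_t)$ is continuous in $t$ — but we have just shown it is affine in $t$, hence continuous. Combining weak continuity with continuity of $\bfE$ along the path yields convergence in the strong topology, as the strong topology is the coarsest refinement of the weak one making $\bfE$ continuous. I expect the main obstacle to be the lower-barrier/finite-energy part: one must produce a uniform (in $j$) lower bound ensuring $u_t\not\equiv-\infty$ and $\bfE(u_t)>-\infty$, which requires choosing the barrier $v_0,v_1$ and invoking monotone continuity of $\bfE$ and of the Monge–Amp\`ere operator in $\mcE^1$ on a singular $Z$ — this is exactly where the results of \cite{BBEGZ, GZ07, DNG18} for normal varieties (as opposed to the smooth case treated in \cite{BBJ18}) do the essential work; the rest is a routine monotone-limit argument.
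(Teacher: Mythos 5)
The paper does not actually prove this theorem: it states the decreasing-limit construction in the paragraph just above and then attributes the result to \cite[Proposition 4.6]{DNG18} and \cite[Theorem 1.7]{BBJ18}. So there is nothing in the paper to compare your proof against line by line; what you have done is supply an argument consistent with the construction the paper sketches, and it is broadly the argument those references use. Most of the steps are fine: monotonicity in $j$ from the envelope description, affineness of $\bfE$ along bounded geodesics, monotone continuity of $\bfE$ to pass to the limit and conclude $u_t\in\mcE^1$ with $\bfE(u_t)$ affine, and then strong continuity from weak continuity of slices plus continuity of $\bfE$, which is precisely what the definition of the strong topology requires.

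However, there is a genuine error in your ``lower barrier'' sub-argument. You propose to find a bounded $\omega$-psh $v_i\le u_i$ and offer $\max(u_i,-C)$ as a candidate, but $\max(u_i,-C)\ge u_i$, not $\le u_i$; and in fact if $u_i\in\mcE^1$ is unbounded below (the generic case) there is \emph{no} bounded $\omega$-psh function lying below it, since $v\le u_i$ with $v\ge -M$ would force $u_i\ge -M$. So that step, as written, cannot work. Fortunately it is also unnecessary: the uniform lower bound $\bfE(u_t^j)\ge (1-t)\bfE(u_0)+t\,\bfE(u_1)>-\infty$ (from affineness and $\bfE(u_i^j)\ge\bfE(u_i)$) already guarantees, by the standard monotone-continuity lemma for decreasing sequences of bounded energy, that $u_t\not\equiv-\infty$ and $u_t\in\mcE^1(\omega)$. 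You should simply delete the barrier detour and invoke the energy bound directly. With that correction the proof is sound, modulo the usual hand-waving on weak continuity of slices and continuity at the endpoints $t=0,1$, which deserve a sentence or two more care (e.g.\ uniform Lipschitz-in-$t$ bounds for the bounded geodesics, then passing to the decreasing limit).
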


Generalizing Darvas' result in the smooth case (\cite{Dar15}), the works in \cite{Dar17, DNG18} showed that 
$\mcE^1$ can be characterized as the metric completion of $\mcH(\omega)$ under a Finsler metric $d_1$ which can be defined as follows.  
Fix a log resolution $\mu: Y\rightarrow Z$ and a K\"{a}hler form $\omega_P>0$ on $Y$. Then
\begin{equation}\label{eq-omegaep}
\omega_\epsilon:=\mu^* \omega+\epsilon \omega_P
\end{equation}
is a K\"{a}hler form and one can define Darvas' Finsler metric $d_{1,\epsilon}$ on $\mcH(Z, \omega_\epsilon)$. Note that $u\in \mcH(Z, \omega)$ implies $u\in \mcH(Y, \omega_\epsilon)$. One then defines (see \cite[Definition 1.10]{DNG18})
\begin{eqnarray*}
d_1(u_0, u_1)=\liminf_{\epsilon\rightarrow 0}d_{1,\epsilon}(u_0, u_1).
\end{eqnarray*}

It is known that $u_j\rightarrow u$ in $\mcE^1$ under the strong topology 
if and only if $d_1(u_j, u)=0$.  Moreover in this case the Monge-Amp\`{e}re measures $(\sddb(\psi+u_j))^n$ converges weakly to $(\sddb(\psi+u))^n$
(see \cite[Proposition 2.6]{BBEGZ}).

\subsection{Energy functions}\label{sec-energy}


For any $\vphi\in {\rm PSH}([\omega])$ such that $\vphi-\psi\in \mcE^1(L)$, we also have the following well-studied functionals: 
\begin{eqnarray}
\bfJ(\vphi)&:=&\bfJ_{\psi}(\vphi)=\int_Z (\vphi-\psi)(\sddb\psi)^n-\bfE_{\psi}(\vphi), \label{eq-Jphi}\\
\bfI(\vphi)&:=&\bfI_{\psi}(\vphi)= \int_Z (\vphi-\psi)\left((\sddb\psi)^n-(\sddb\vphi)^n\right), \label{eq-Iphi}\\
({\bf I}-\bfJ)(\vphi)&:=&({\bf I}-\bfJ)_{\psi}(\vphi)=\bfE_{\psi}(\vphi)-\int_Z (\vphi-\psi)(\sddb \vphi)^n.
\end{eqnarray}
We have the well-known inequality:
\begin{equation}\label{eq-IJineq}
\frac{1}{n+1}\bfI\le \bfJ\le \frac{n}{n+1}\bfI.
\end{equation}

Let $\mu: Y\rightarrow Z$ be a log resolution of singularities such that $\mu^{-1}Z^\sing=\sum_k E_k$ is the reduced exceptional divisor, $Q':=\mu^{-1}_*Q$ is the strict transform of $Q$ and $Q'+\sum_k E_k$ has simple normal crossings. We can write:
\begin{equation}
K_Y+Q'=\mu^*(K_Z+Q)+\sum_k a_k E_k.
\end{equation}
\begin{defn}
$(Z, Q)$ is said to have sub-klt singularities if there exists a log resolution of singularities as above such that $a_k>-1$ for all $k$. If $Q$ is moreover effective, then $(Z, Q)$ is said to have klt singularities.
\end{defn}
Fix $\ell_0\in \bN^*$ such that $\ell_0 (K_Z+Q)$ is Cartier. If $\sigma$ is a nowhere-vanishing holomorphic section of the corresponding line bundle over a smooth open set $U$ of $Z$, then there is a pull-back meromorphic volume form on $\mu^{-1}(U)$:
\begin{equation}
\mu^*\left(\sqrt{-1}^{\ell_0 n^2} \sigma\wedge \bar{\sigma}\right)^{1/\ell_0}=\prod_{i}|z_i|^{2a_i}dV,
\end{equation}
where $\{z_i\}$ are local holomorphic coordinates and $dV$ is a smooth volume for on $Y$. If $(Z, Q)$ is sub-klt, then the above volume form is locally integrable. 
\begin{defn}[{see \cite[section 3]{BBEGZ}}]
Let $(Z, Q)$ be a sub-klt pair. Assume that $L=\lambda^{-1}(-K_Z-Q)$ is an ample $\bQ$-line bundle for $\lambda>0\in \bQ$.
Let $\vphi\in \PSH_\bd(Z, L)$ be a bounded Hermitian metric on the $\bQ$-line bundle $L$. The adapted measure $\mes_\vphi$ is a globally defined measure:
\begin{equation}\label{eq-mesvphi}
\frac{e^{-\lambda\vphi}}{|s_Q|^2}:=
\mes_\vphi=\left(\sqrt{-1}^{\ell_0 n^2} \sigma\wedge \bar{\sigma}\right)^{1/\ell_0}{|\sigma^*|_{\ell_0 \lambda \vphi}^{2/\ell_0}},
\end{equation}
where $\sigma^*$ is the dual nowhere-vanishing section of $-\ell_0(K_Z+Q)$. 
\end{defn}
Note that over the locus where $Q$ is $\bQ$-Cartier, $s_Q$ in the above identity has the natural meaning of being the defining section of the divisor $Q$. In particular, the right-hand-side of \eqref{eq-mesvphi} defines a measure on the regular locus (i.e. simple normal crossing locus) of $(Z, Q)$, which naturally extends to be a measure on the whole $Z$.

The Ding- and Mabuchi- functionals on $\mcE^1(Z, L)$ are defined as follows:
\begin{eqnarray}
\def\arraystretch{1.5}
\bL(\vphi)&:=&\bL_{(Z,Q)}(\vphi)=-\frac{V}{\lambda}\cdot \log\left(\int_Z e^{- \lambda\vphi}\frac{1}{|s_Q|^2}\right)\label{eq-LB}\\
\bfD(\vphi)&:=&\bfD_{(Z,Q),\psi}(\vphi)=\bfD_{\psi}(\vphi)=-\bfE_{\psi}(\vphi)+\bL_{(Z,Q)}(\vphi) \label{eq-DB}\\
\bfH(\vphi)&:=&\bfH_{(Z,Q),\psi}(\vphi)=\int_Z \log\frac{|s_Q|^2(\sddb\vphi)^n}{e^{-\lambda \psi}}(\sddb \vphi)^n \label{eq-Hphi}\\
\bfM(\vphi)&:=&\bfM_{(Z,Q),\psi}(\vphi)=\bfM_{\psi}(\vphi)=\lambda^{-1} \bfH(\vphi)-({\bf I}-\bfJ)_{\psi}(\vphi) \label{eq-Mphi}.
\end{eqnarray}

\begin{defn}[{\cite[Definition 1.3]{BBEGZ}}]
A positive measure $\nu$ on $Z$ is tame if $\nu$ puts no mass on closed analytic sets and if there is a resolution of singularities $\mu: Y\rightarrow Z$ such that the lift $\nu_Y$ of $\nu$ to $Y$ has $L^p$ density for some $p>1$.
\end{defn}
The following compactness result is very important in the variational approach for solving K\"{a}hler-Einstein equations using pluripotential theory.
\begin{thm}[{\cite[Theorem 2.17]{BBEGZ}}]\label{thm-BBEGZ}
Let $\nu$ be a tame probability measure on $Z$. 
For any $C>0$, the following set is compact in the strong topology:
\[
\left\{u\in \mcE^1(Z, \omega); \quad
\sup_{Z}u=0, \quad \int_Z \log\frac{\omega_{u}^n}{\nu}\omega^n_{u}<C
\right\}.
\]
\end{thm}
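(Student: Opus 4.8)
The statement is a compactness claim in the strong topology, so the plan is the usual one: take a sequence $u_j$ in the displayed set, extract a subsequence that converges weakly (this part is soft), and then do the real work, namely (i) pass the entropy bound to the limit, (ii) check that the limit still lies in $\mcE^1$, and (iii) upgrade weak convergence to strong convergence, i.e.\ to convergence of $\bfE$. For the weak limit: since each $u_j\in\PSH(Z,\omega)$ with $\sup_Z u_j=0$, the family $\{u_j\}$ is relatively compact in $L^1(Z)$, so after passing to a subsequence $u_j\to u$ in $L^1$ with $u\in\PSH(Z,\omega)$ and, by Hartogs' lemma, $\sup_Z u=0$. Passing to a further subsequence, the probability measures $\mu_j:=\omega_{u_j}^n/\!\int_Z\omega^n$ converge weakly to a probability measure $\mu$ on $Z$; since the relative entropy $\nu'\mapsto\int_Z\log(d\nu'/d\nu)\,d\nu'$ is convex and weakly lower semicontinuous, the entropy of $\mu$ against $\nu$ is $\le\liminf_j$ of that of $\mu_j$, hence bounded, and in particular $\mu\ll\nu$, so $\mu$ charges no pluripolar set (as $\nu$, being tame, charges no analytic set).

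The technical heart is a \emph{uniform} energy bound $\sup_j\bfJ(u_j)<\infty$, equivalently $\inf_j\bfE(u_j)>-\infty$. I would obtain it from Legendre duality for the Monge--Amp\`ere energy together with the fact that, for a tame reference measure, the energy of a measure is controlled by its entropy. Concretely, plugging $\psi+u_j$ into the definition of $\bfE^*(\mu_j):=\sup\{\bfE_\psi(\psi+v)-\int_Z v\,d\mu_j:\ v\in\PSH(Z,\omega),\ \sup_Z v=0\}$ gives, up to the harmless additive term $-\int_Z\psi\,d\mu_j$ (bounded by $\|\psi\|_\infty$), the inequality $(\bfI-\bfJ)(u_j)\le\bfE^*(\mu_j)+\|\psi\|_\infty$, and by \eqref{eq-IJineq} this bounds $\bfJ(u_j)\le n\,(\bfI-\bfJ)(u_j)$ as well. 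It then remains to bound $\bfE^*(\mu_j)$ by the entropy: for a tame $\nu$ one has $\bfE^*(\mu)\le\Psi\!\left(\int_Z\log(d\mu/d\nu)\,d\mu\right)$ for some increasing $\Psi$ depending on $\nu$. This last estimate is exactly where tameness is used: one pulls back to a log resolution $Y\to Z$ on which $\nu$ has $L^p$ density for some $p>1$, uses the Young inequality $ab\le a\log a-a+e^b$ (with $a$ the density of $\mu$ against $\nu$ and $b=-v$) to bring in the entropy, and estimates the resulting exponential integral by H\"older against the $L^p$ density and a uniform Skoda-type exponential bound for normalized quasi-psh functions, the term $\bfE_\psi(\psi+v)$ absorbing the contribution of very singular $v$. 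Making all the constants fit so that the supremum over $v$ stays finite is the subtle point, and is the main obstacle of the whole argument; see \cite{BBEGZ}. Granting it, $\sup_j\bfJ(u_j)<\infty$.

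Finally, for (ii) and (iii): set $v_j:=(\sup_{k\ge j}u_k)^*$, which decreases to $u$. By continuity of $\bfE$ along decreasing sequences, $\bfE(v_j)\to\bfE(u)$; since $v_j\ge u_j$ and $\bfE$ is monotone (see \eqref{eq-monotoneE}), $\bfE(v_j)\ge\bfE(u_j)\ge-\sup_k\bfJ(u_k)>-\infty$, so $\bfE(u)>-\infty$, i.e.\ $u\in\mcE^1$, and moreover $\limsup_j\bfE(u_j)\le\bfE(u)$. For the reverse inequality $\liminf_j\bfE(u_j)\ge\bfE(u)$ one must rule out loss of energy in the weak limit, and this is where the entropy bound re-enters: if energy were lost, the weak limit $\mu$ of the $\mu_j$ would differ from $\omega_u^n/\!\int_Z\omega^n$ by a nonzero measure carried by pluripolar sets (the locus $\{u=-\infty\}$ and the sets where the Monge--Amp\`ere mass escapes), contradicting the fact, established above, that $\mu$ charges no pluripolar set; combined with the uniform $\bfJ$-bound and the stability estimates of \cite{BBEGZ} this forces $u_j\to u$ in the strong topology and $\mu=\omega_u^n/\!\int_Z\omega^n$. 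Since the entropy bound has been shown to pass to $u$, the strong limit lies in the given set, and as this holds for every sequence we conclude that the set is compact in the strong topology.
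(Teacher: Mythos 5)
The paper itself does not prove this statement; it is imported verbatim as \cite[Theorem 2.17]{BBEGZ}, so there is no local proof to compare your argument against. Your sketch does follow the broad shape of the BBEGZ argument: $L^1$-relative compactness of normalized $\omega$-psh functions, weak lower semicontinuity of relative entropy, and the Legendre-duality step $(\bfI-\bfJ)(u)\le \bfE^*(\mu)$ together with an entropy-controls-energy bound $\bfE^*(\mu)\le\Psi\!\left(\int\log(d\mu/d\nu)\,d\mu\right)$ obtained from Young's inequality and a uniform Skoda-type exponential estimate on a resolution where $\nu$ has $L^p$ density. That much is the right skeleton.

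However, the last step contains a genuine gap. You claim that if energy were lost along the sequence, then the weak limit $\mu$ of $\omega_{u_j}^n$ would differ from $\omega_u^n$ by a measure carried on pluripolar sets, and deduce a contradiction from $\mu\ll\nu$. This is not a valid deduction. The Monge--Amp\`ere operator fails to be continuous along $L^1$-convergent sequences even of \emph{bounded} $\omega$-psh functions, and in such examples the defect $\mu-\omega_u^n$ is not supported on a pluripolar set; so ``$\mu$ puts no mass on pluripolar sets'' does not force $\mu=\omega_u^n$. The actual mechanism in BBEGZ uses more than non-pluripolarity: the entropy bound gives \emph{uniform integrability} of the densities $d\mu_j/d\nu$ (de la Vall\'ee-Poussin), and the proof of upper semicontinuity of $\bfE$ along the sequence hinges on that, together with convexity and joint lower semicontinuity of entropy and energy; merely quoting ``the stability estimates of \cite{BBEGZ}'' leaves precisely the crucial part unaddressed. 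Two smaller points: the displayed set is written with the strict inequality $<C$, which cannot be closed (hence cannot be compact) in the strong topology — as the paper's own later use $\bfH(\vphi)\le(\gamma+n)s+C$ shows, the inequality should be $\le$; and the bound $\bfE(u_j)\ge-\sup_k\bfJ(u_k)$ should read $\bfE(u_j)\ge-C_0-\sup_k\bfJ(u_k)$, the constant $C_0$ coming from the uniform lower bound on $\int_Z u_j\,\omega^n$ over $\{\sup_Z u=0\}$.
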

In the rest of this subsection, we will assume that $(Z, Q)=(X, D)$ is a log Fano pair in the following sense:
\begin{defn}\label{defn-logFano}
A pair $(X, D)$ is called a log Fano pair if $-(K_X+D)$ is an ample $\bQ$-Cartier divisor, $D$ is effective and $(X, D)$ has klt singularities.
\end{defn}
We have the following well-known definition:
\begin{defn}
We say that the energy $\bfF\in \{\bfD, \bfM\}$ is proper (sometimes called coercive in the literature) if there exist $\gamma>0$ and $C\in \bR$ such that for any $\vphi\in \mcE^1(X, L)$
\begin{equation}\label{eq-FGproper}
\bfF(\vphi) \ge \gamma\cdot  \bfJ(\vphi)-C.
\end{equation}
\end{defn}
We will use the following analytic criterion for the existence of K\"{a}hler-Einstein metrics on log Fano varieties.
\begin{thm}[{\cite{BBEGZ}, \cite{Dar17}, \cite{DNG18}}]\label{thm-analytic}
Let $(X, D)$ be a log Fano pair with a discrete automorphism group. Assume $L=\lambda^{-1}(-K_X-D)$ with $\lambda>0\in \bQ$. The following conditions are equivalent:
\begin{enumerate}[(1)]
\item The Ding energy $\bfD$ is proper over $\mcE^1(X, L)$. 
\item The Ding energy $\bfD$ is proper over $\mcH(X, L)$.
\item The Mabuchi energy $\bfM$ is proper over $\mcE^1(X, L)$.
\item $(X, D)$ admits a unique K\"{a}hler-Einstein metric with Ricci curvature $\lambda$.
\end{enumerate}

\end{thm}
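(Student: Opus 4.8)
\textbf{Proof proposal for Theorem \ref{thm-analytic}.}

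The plan is to establish the cycle of implications $(4)\Rightarrow(3)\Rightarrow(1)\Rightarrow(2)\Rightarrow(4)$, treating the first three arrows as essentially ``soft'' consequences of facts already assembled in Section \ref{sec-pre}, and concentrating the real work on the final implication $(2)\Rightarrow(4)$, which is the direction that actually produces a Kähler--Einstein metric from a weak-looking hypothesis.

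First I would dispense with $(3)\Rightarrow(1)$. By \eqref{eq-Mphi} and \eqref{eq-DB} the Mabuchi and Ding energies differ by the quantity $\lambda^{-1}\bfH-(\bfI-\bfJ)$ versus $-\bfE+\bL$; the standard relation $\bfM\le \bfD$ on $\mcE^1$ (a consequence of Jensen's inequality applied to the adapted measure, valid because $(X,D)$ is Klt so $\mes_\vphi$ is tame) gives at once that properness of $\bfM$ forces properness of $\bfD$ after adjusting the constant $C$, using \eqref{eq-IJineq} to compare $\bfJ$ with $\bfI-\bfJ$. The implication $(1)\Rightarrow(2)$ is trivial since $\mcH(X,L)\subset\mcE^1(X,L)$. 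The implication $(4)\Rightarrow(3)$ is the ``easy'' analytic direction of the uniqueness-and-properness package: a Kähler--Einstein metric is a critical point of $\bfM$, and convexity of $\bfM$ along finite-energy geodesics together with the discreteness of $\Aut(X,D)$ upgrades ``bounded below'' to ``proper''; this is exactly the content imported from \cite{BBEGZ, DNG18} (uniqueness) and the Berman--Darvas--Lu type argument. I would simply cite these.

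The main obstacle is $(2)\Rightarrow(4)$: from properness of $\bfD$ merely over the \emph{smooth} potentials $\mcH(X,L)$, produce a genuine KE metric. The strategy is the variational method of \cite{BBEGZ}. One first shows that properness over $\mcH(X,L)$ implies properness over all of $\mcE^1(X,L)$: given $\vphi\in\mcE^1$, approximate it by a decreasing sequence $\vphi_j\in\mcH$ via Proposition \ref{prop-smapp} (applied on $X$, using $\omega=\sddb\psi$), and observe that $\bfE$ is continuous along such decreasing sequences while $\bL$ is lower semicontinuous (indeed continuous here, because $e^{-\lambda\vphi_j}/|s_D|^2$ increases and the adapted measure is tame, so dominated/monotone convergence applies); hence $\bfD(\vphi)=\lim_j\bfD(\vphi_j)\ge\gamma\bfJ(\vphi_j)-C\to\gamma\bfJ(\vphi)-C$. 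Now take a minimizing sequence $\vphi_j\in\mcE^1(X,L)$ for $\bfD$, normalized by $\sup(\vphi_j-\psi)=0$. Properness bounds $\bfJ(\vphi_j)$, hence $\bfE_\psi(\vphi_j)$, from below; combined with $\sup=0$ this gives compactness. The key point is to apply Theorem \ref{thm-BBEGZ} with $\nu$ the adapted (tame) measure: along the minimizing sequence the entropy-type term $\bfH$ stays bounded — this needs the elementary inequality $\bfD(\vphi)\ge -\bfE(\vphi)+\text{(const)}$ combined with the bound on $\bfE$, or more directly one argues that the minimizer of $\bfD$ coincides, after the Legendre-type manipulation $\bfD=\bfM+(\text{nonneg})$ on the relevant locus, with a minimizer of $\bfM$ whose entropy is controlled — so Theorem \ref{thm-BBEGZ} yields a strong-topology limit $\vphi_\infty\in\mcE^1$ minimizing $\bfD$. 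Strong continuity of $\bfD$ (built into the strong topology via continuity of $\bfE$, plus continuity of $\bL$ under strong convergence since the Monge--Ampère measures converge weakly and the adapted measure is tame) gives $\bfD(\vphi_\infty)=\inf\bfD$.

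Finally I would show the minimizer solves the KE equation. Perturbing $\vphi_\infty$ by $t\chi$ for $\chi$ smooth and differentiating $\bfD$ at $t=0$ — justified by the first-variation formula $\frac{d}{dt}\bfD(\vphi_\infty+t\chi)\big|_{t=0}=\int_X\chi\big((\sddb\vphi_\infty)^n/V - \mes_{\vphi_\infty}/\!\int\mes_{\vphi_\infty}\big)$, which holds for finite-energy minimizers by the differentiability results of \cite{BBEGZ} — forces
\begin{equation}
(\sddb\vphi_\infty)^n = \frac{V}{\int_X \mes_{\vphi_\infty}}\,\mes_{\vphi_\infty}.
\end{equation}
This is precisely the Monge--Ampère equation whose solution is the KE potential; regularity of $\vphi_\infty$ off $X^{\sing}\cup\supp(D)$ follows from Kołodziej-type estimates and the complex Monge--Ampère regularity theory of \cite{EGZ09}, and uniqueness with discrete $\Aut(X,D)$ is the cited result. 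I expect the delicate points to be (a) verifying that the entropy stays bounded along the \emph{Ding}-minimizing sequence so that Theorem \ref{thm-BBEGZ} is applicable — this is where one genuinely uses that $D$ is effective and $(X,D)$ Klt, making the adapted measure tame — and (b) the passage from properness over $\mcH(X,L)$ to properness over $\mcE^1(X,L)$, i.e.\ that no minimizing mass ``escapes'' to the boundary of $\mcE^1$; both are handled by the approximation/compactness machinery of \cite{BBEGZ} together with the Berman--Darvas--Lu refinement alluded to in the introduction.
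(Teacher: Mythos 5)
The statement labeled Theorem~\ref{thm-analytic} is cited in the paper from \cite{BBEGZ}, \cite{Dar17}, \cite{DNG18}; the paper itself supplies no proof, only the refinement Proposition~\ref{prop-proper2} whose proof concerns $\hat{\cH}$ rather than $\mcH$. So the comparison here is between your proposal and the standard literature, not an in-paper argument.

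Your proposal has one outright error and one structural confusion that stems from it. You assert that $\bfM\le\bfD$ ``by Jensen's inequality'' and use this to get $(3)\Rightarrow(1)$. The Jensen inequality goes the other way: the relative entropy of $\omega_\vphi^n/V$ with respect to the normalized adapted measure $\mu_\vphi$ is nonnegative, and a direct computation gives $\bfM(\vphi)-\bfD(\vphi)=\lambda^{-1}\,\mathrm{Ent}\bigl(\omega_\vphi^n/V\,\big|\,\mu_\vphi\bigr)\cdot V\ge 0$, i.e.\ $\bfD\le\bfM$. Thus the easy implication is $(1)\Rightarrow(3)$, not $(3)\Rightarrow(1)$. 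With your inequality reversed, the cycle $(4)\Rightarrow(3)\Rightarrow(1)\Rightarrow(2)\Rightarrow(4)$ you propose does not close, because the $(3)\Rightarrow(1)$ link is exactly the hard direction (it is equivalent, via the other arrows, to $\bfM$ proper $\Rightarrow$ KE $\Rightarrow$ $\bfD$ proper, and the last step needs the Darvas--Rubinstein/Berman--Darvas--Lu machinery, not a one-line comparison). The correct organization is $(1)\Rightarrow(3)$ via $\bfD\le\bfM$, $(3)\Rightarrow(4)$ by minimizing $\bfM$ directly (where the entropy term is a summand of $\bfM$, so its boundedness along a minimizing sequence is automatic and Theorem~\ref{thm-BBEGZ} applies cleanly), $(4)\Rightarrow(1)$ as the hard implication, and $(1)\Leftrightarrow(2)$ by approximation. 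The same sign error reappears in your entropy-control step where you invoke ``$\bfD=\bfM+(\text{nonneg})$'': this is false, and it obscures that entropy is not a summand of $\bfD$, which is precisely why minimizing $\bfD$ directly requires a different compactness input than the one you cite.

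Two smaller points. Your reduction from properness of $\bfD$ over $\mcH(X,L)$ to properness over $\mcE^1(X,L)$ via Proposition~\ref{prop-smapp} and monotone convergence is correct for the Ding energy, since $\bfE$, $\bfJ$, and $\int e^{-\lambda\vphi}/|s_D|^2$ all pass to the limit along decreasing sequences. But you then list this passage among the ``delicate points,'' seemingly conflating it with the corresponding question for $\bfM$: the latter is genuinely open in the singular case (the paper says so explicitly in Remark~\ref{rem-smoothproper}, which is why Proposition~\ref{prop-proper2} is stated over $\hat{\cH}$ rather than $\mcH$ and uses the Berman--Darvas--Lu entropy-approximation rather than monotone regularization). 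Finally, the Euler--Lagrange step and the regularity appeal to \cite{EGZ09} are in order, but you should note that the variational argument you sketch is essentially $(1)\Rightarrow(4)$ rather than the direct $(2)\Rightarrow(4)$ you announce; the reduction $(2)\Rightarrow(1)$ should be stated first.
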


For later use, we need a refinement of the properness for $\bfM$. Fix a log resolution $\mu: Y\rightarrow X$ and assume that the reduced exceptional divisor is given by $\mu^{-1}X^{\rm sing}=\sum_k E_k$ and write
\begin{equation}
-K_Y=\mu^*(-(K_X+D))+\mu_*^{-1}D+\sum_k (-a_k) E_k=:\mu^*(K_X+D)+B.
\end{equation} 
Choose a smooth reference metric $\psi_0\in \mcH(X, \omega)$. 
By abuse of notations, we will identify Hermitian metrics on $L:=-(K_X+D)$ with their pull-back metric on $\mu^*L$. As a consequence, we identify
 $\mcE^1(X, \omega)$ with $\mcE^1(Y, \mu^*\omega)$. 
So for any $\vphi\in \mcE^1(X, \omega)$, we have the following identities:
\begin{eqnarray}
\bfM(\vphi)&=&\int_X \log\frac{(\sddb \vphi)^n}{e^{-\psi_0}\frac{1}{|s_D|^2}}(\sddb \vphi)^n+\int_X(\vphi-\psi_0)(\sddb\vphi)^n-\bfE_{\psi_0}(\vphi)\nonumber \\
&=&\int_Y \log\frac{(\sddb \vphi)^n}{e^{-\psi_0}\frac{1}{|s_B|^2}}(\sddb \vphi)^n+\int_Y(\vphi-\psi_0)(\sddb\vphi)^n-\bfE_{\psi_0}(\vphi)\nonumber \\
&=&\int_Y \log \frac{(\sddb\vphi)^n}{\Omega}(\sddb\vphi)^n-\int_Y\log \frac{e^{-\psi_0}}{|s_B|^2 \Omega}(\sddb\vphi)^n\nonumber \\
&&\hskip 4.5cm +\int_Y (\vphi-\psi_0)(\sddb\vphi)^n-\bfE_{\psi_0}(\vphi), \label{eq-Mabuchi1}
\end{eqnarray}
where for the last identity we used a fixed smooth volume form $\Omega$ on $Y$.
Let $s_B$ be the defining section of the $\bQ$-line bundle associated to the divisor $B$ and choose a smooth Hermitian metric on this line bundle.
Consider the space:
\begin{eqnarray}
\hat{\cH}(\omega)&:=&\hat{\cH}(X, \omega)=\left.\{u\in \PSH_\bd(\omega); (\mu^*u)|_{Y\setminus B}\in C^\infty(Y\setminus B), \right. \frac{\mu^*(\sddb(\psi+u))^n}{\Omega} \in C^\infty(Y),\nonumber  \\
&&\left. \text{ and there exist } \alpha>0, C>0 \text{ such that } |\sddb (\mu^*u)|_{\omega}\le C |s_B|^{-\alpha} \text{ on } Y\setminus B \right\}. \nonumber \\
\hat{\cH}(L)&:=&\hat{\cH}(X, L)=\{\psi+u; u\in \hat{\cH}(X, \omega)\}. \label{eq-hatcH}
\end{eqnarray}
\begin{prop}\label{prop-proper2}
With the same notations as in the above theorem, the following conditions are equivalent:
\begin{enumerate}[(1)]
\item The Mabuchi energy $\bfM$ is proper over $\mcE^1(X, L)$.
\item The Mabuchi energy $\bfM$ is proper over $\hat{\cH}(X, L)$.
\item $(X, D)$ admits a unique K\"{a}hler-Einstein metrics with Ricci curvature $\lambda$.
\end{enumerate}
\end{prop}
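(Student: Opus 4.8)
\textbf{Proof plan for Proposition \ref{prop-proper2}.}

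The strategy is to establish the cycle of implications $(3)\Rightarrow(1)\Rightarrow(2)\Rightarrow(3)$. The implication $(3)\Rightarrow(1)$ is immediate from Theorem \ref{thm-analytic}: the existence of a KE metric with Ricci curvature $\lambda$ gives properness of $\bfD$ over $\mcE^1(X,L)$, hence of $\bfM$ over $\mcE^1(X,L)$ by the equivalence $(1)\Leftrightarrow(3)$ there. Likewise $(1)\Rightarrow(2)$ is trivial once we check the inclusion $\hat\cH(X,L)\subset\mcE^1(X,L)$: indeed every $u\in\hat\cH(\omega)$ is bounded, so $u\in\PSH_\bd(\omega)\subset\mcE^1(\omega)$, and the properness inequality \eqref{eq-FGproper} restricts to the smaller space. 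So the whole content is in $(2)\Rightarrow(3)$, and I would prove its contrapositive in the form $(2)\Rightarrow(1)$, after which Theorem \ref{thm-analytic} closes the loop.

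For $(2)\Rightarrow(1)$, the plan is an approximation argument in the spirit of Berman-Darvas-Lu. Fix $\gamma,C$ witnessing properness of $\bfM$ on $\hat\cH(X,L)$, and take an arbitrary $\vphi\in\mcE^1(X,L)$; we must show $\bfM(\vphi)\ge\gamma'\bfJ(\vphi)-C'$ for some uniform $\gamma',C'$. The idea is to exhibit a sequence $\vphi_j\in\hat\cH(X,L)$ with $\vphi_j\to\vphi$ in the strong $\mcE^1$-topology such that $\bfE(\vphi_j)\to\bfE(\vphi)$ (automatic from strong convergence), $\bfJ(\vphi_j)\to\bfJ(\vphi)$, and $\limsup_j\bfM(\vphi_j)\le\bfM(\vphi)$ — i.e. upper semicontinuity of $\bfM$ along this particular approximating sequence. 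Granting this, applying the properness bound on each $\vphi_j$ and passing to the limit yields the bound for $\vphi$. To build such a sequence I would first reduce to $\vphi$ with minimal singularities (e.g. replace $\vphi$ by $\max(\vphi,\psi_0-k)$, which lies in $\PSH_\bd(L)$, increases $\bfE$, and by standard monotonicity does not increase $\bfM$ in the limit), so it suffices to approximate bounded potentials; then for bounded $\vphi$, solve on $Y$ the perturbed complex Monge-Amp\`ere equations $(\sddb\vphi_{j})^n = c_j\, e^{-\psi_0}|s_B|^{-2}(1+ |s_B|^{2}/j)^{-1}\,\cdot(\text{a fixed smooth volume form correction})$, or more simply regularize the density $e^{-\psi_0}|s_B|^{-2}$ of the KE-type measure from below by smooth positive densities and invoke the Aubin-Yau/Kolodziej-type solvability on the smooth Fano-type pair, producing $\vphi_j$ whose Monge-Amp\`ere measure has a density satisfying precisely the two defining conditions of $\hat\cH$ (smoothness off $B$, $\Omega$-smoothness of $(\sddb\vphi_j)^n$, and the $|s_B|^{-\alpha}$ Laplacian bound coming from elliptic estimates against the edge-cone reference). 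The entropy term $\int_Y\log\frac{(\sddb\vphi_j)^n}{\Omega}(\sddb\vphi_j)^n$ — the first term in \eqref{eq-Mabuchi1} — is then controlled because the density is explicit, while the remaining terms $\int_Y\log\frac{e^{-\psi_0}}{|s_B|^2\Omega}(\sddb\vphi_j)^n$ and $\int(\vphi_j-\psi_0)(\sddb\vphi_j)^n$ converge by weak convergence of the Monge-Amp\`ere measures together with the fact that $\log\frac{e^{-\psi_0}}{|s_B|^2\Omega}$ is quasi-continuous and in the relevant $L^1$ class (this is where the tameness of the adapted measure, Definition after \eqref{eq-Mphi}, and Theorem \ref{thm-BBEGZ} enter).

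The main obstacle is the semicontinuity/convergence of the Mabuchi energy $\bfM$ along the approximating sequence, specifically the entropy term, which is only lower semicontinuous in general; the whole point of constructing $\vphi_j$ as solutions of Monge-Amp\`ere equations with controlled densities (rather than, say, decreasing smooth approximants from Proposition \ref{prop-smapp}, along which $\bfM$ need not behave well) is to force $\limsup_j\bfM(\vphi_j)\le\bfM(\vphi)$. A clean way to organize this is: choose the regularizing densities $f_j\nearrow e^{-\psi_0}|s_B|^{-2}/\Omega$ monotonically, so that by the comparison of Monge-Amp\`ere measures and convexity of $t\mapsto t\log t$ one gets monotonicity (or near-monotonicity up to $o(1)$) of the entropy, and then identify the limit using the strong $\mcE^1$-convergence $\vphi_j\to\vphi$ (which holds by the stability estimates for Monge-Amp\`ere with $L^1$-convergent, uniformly $L^p$-bounded densities, $p>1$, again via Theorem \ref{thm-BBEGZ}) to pin down the limiting entropy as $\bfH(\vphi)$. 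I would also need the elementary observation (Remark \ref{rem-smoothproper}/\ref{rem-smoothconvex}) that verifying properness merely on $\hat\cH$ rather than on all of $\mcH$ is already enough — which is exactly what makes this route viable, since smooth $\omega$-psh potentials on $X$ generally fail to have the required regularity on $Y$ near $B$, whereas solutions of the regularized KE equation automatically land in $\hat\cH$.
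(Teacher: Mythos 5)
Your high-level plan is right: reduce to $(2)\Rightarrow(1)$ (the other implications being immediate from Theorem \ref{thm-analytic} and the inclusion $\hat\cH(X,L)\subset\mcE^1(X,L)$), then approximate an arbitrary $\vphi\in\mcE^1(X,L)$ by a sequence $\vphi_j\in\hat\cH(X,L)$ converging strongly with convergence of the Mabuchi energy, and you correctly identify that the $\vphi_j$ should come from Monge--Amp\`ere equations with controlled right-hand sides rather than from Demailly-type decreasing regularization. But the construction you propose regularizes the \emph{wrong density}. The equations $(\sddb\vphi_j)^n = c_j\,e^{-\psi_0}|s_B|^{-2}(1+|s_B|^2/j)^{-1}\cdot\Omega'$, or more generally $(\sddb\vphi_j)^n = c_j f_j\Omega$ with $f_j\nearrow e^{-\psi_0}|s_B|^{-2}/\Omega$, have right-hand sides that regularize the adapted reference measure $\nu_0$ and are entirely independent of the given $\vphi$. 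By the very stability estimates you invoke, such $\vphi_j$ converge to the unique potential whose Monge--Amp\`ere measure is proportional to $\nu_0$, not to $\vphi$, so the claim ``identify the limit using the strong $\mcE^1$-convergence $\vphi_j\to\vphi$'' fails, and the limiting entropy is that of this special potential rather than $\bfH(\vphi)$.

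The paper's version of the Berman--Darvas--Lu argument instead regularizes the Monge--Amp\`ere density of the \emph{given} potential: set $g := (\sddb\vphi)^n/\Omega$, assume $\bfH(\vphi)<\infty$ (otherwise nothing to prove), truncate $h_k := \min\{k,g\}$, and smooth to obtain positive $g_k\in C^\infty(Y)$ with $\|g_k - h_k\|_{L^1(\Omega)}\le 1/k$ and $\int_Y g_k\log g_k\,\Omega\to\int_Y g\log g\,\Omega$. Solutions $v_k$ of $\omega_{v_k}^n = C_k g_k\Omega$ lie in $\hat\cH(\omega)$ by EGZ/Demailly--Pali regularity, their entropies converge to $\bfH(\vphi)$ by construction, and the entropy bound together with Theorem \ref{thm-BBEGZ} gives strong subsequential convergence $v_k\to v$. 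Comparing weak limits of the Monge--Amp\`ere measures yields $\omega_v^n = g\Omega = (\sddb\vphi)^n$, and uniqueness of the Monge--Amp\`ere solution forces $v = \vphi-\psi_0$ up to a constant. Because the entropies converge exactly, neither the monotonicity device via convexity of $t\mapsto t\log t$ nor the preliminary reduction to bounded potentials via $\max(\vphi,\psi_0-k)$ is needed. To repair your proof, replace the regularization target $e^{-\psi_0}|s_B|^{-2}/\Omega$ by $(\sddb\vphi)^n/\Omega$ and close the loop with the compactness-plus-uniqueness identification of the limit.
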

\begin{proof}
By the above theorem, we just need to show that (2) implies (1). For any $\vphi\in \mcE^1(L)$, we only need to show that there exists $\vphi_j\in \hat{\cH}(X, \omega)$ such that $\vphi_j$ $d_1$-converges to $\vphi$ and $\bfM(\vphi_j)\rightarrow \bfM(\vphi)$. To prove this, we carry verbatim the argument Berman-Darvas-Lu in \cite[Proof of Lemma 3.1]{BDL17}, which we just sketch here. We can assume that the entropy of $(\sddb\vphi)^n$ is finite, since otherwise the inequality \eqref{eq-FGproper} with $\bfF=\bfM$ is trivially true. Set $g=\frac{(\sddb\vphi)^n}{\Omega}$ and $h_k=\min\{k, g\}$. Then $\|h_k-g\|_{L^1(\Omega)}\rightarrow 0$ and by the dominated convergence theorem
\begin{equation}
\int_Y h_k (\log h_k)\Omega \longrightarrow \int_Y g(\log g)\Omega.
\end{equation}
By using the density of $C^\infty(Y)$ in $L^1(\Omega)$ and dominated convergence theorem, there is a sequence of positive functions $g_k\in C^\infty(Y)$ such that $\|g_k-h_k\|_{L^1}\le \frac{1}{k}$ and
\begin{equation}
\left|\int_Y h_k (\log h_k)\Omega-\int_Y g_k (\log g_k)\Omega\right|\le \frac{1}{k}.
\end{equation}
As a consequence we get 
$\|g-g_k\|_{L^1}\rightarrow 0$ and
\begin{equation}
\int_Y g_k (\log g_k)\Omega\longrightarrow \int_Y g (\log g)\Omega=\int_Y \log\frac{(\sddb \vphi)^n}{\Omega}(\sddb \vphi)^n.
\end{equation}
Using the Calabi-Yau type results for degenerate complex Monge-Amp\`{e}re equations as proved in \cite[Theorem 3.5]{EGZ09} or \cite[Theorem 6.1]{DP10} (see Proposition \ref{prop-partialest}), we find potentials $v_k\in \hat{\cH}(\omega)$ with $\sup_{Y} v_k=0$ and $\omega^n_{v_k}=C_k\cdot g_k\Omega$. 

By construction the entropy of $(\sddb(\psi_0+v_k))^n$ converges to the entropy of $(\sddb\vphi)^n$. On the other hand, Theorem \ref{thm-BBEGZ} implies that $v_k$ converges strongly to some $v\in \mcE^1(Y, \omega)=\mcE^1(X, \omega)$. In particular, the Monge-Amp\`{e}re measures $\omega^n_{v_k}$ converge weakly to $\omega^n_v$. So we get the identity $\omega_v^n=g\Omega$. By the uniqueness of the solution to the complex Monge-Amp\`{e}re equations, we get $v=\vphi-\psi_0$ up to a constant. Then the convergence of the other part of Mabuchi energy follows as in the proof from \cite[4.2]{BDL17} (see also \cite{Dar17}).

\end{proof}

\begin{rem}\label{rem-smoothproper}
Note that by the work of Tian \cite{Tia97} and Darvas-Rubinstein \cite{DR17}, for smooth Fano manifolds with discrete automorphism groups, the properness of Mabuchi energy over $\mcH(X, \omega)$ is equivalent to the existence of KE metrics.
However currently it is not known yet whether this is true over a singular Fano variety $X$. 
The difficulty is that it is not clear whether the properness of Mabuchi energy over $\mcH(X, \omega)$ can imply its properness over $\mcE^1(X, \omega)$. In this respect there is an imprecision in the statement of \cite[Theorem 2.2]{Dar17}, and we would like to thank T. Darvas for clarifications regarding this point.
\end{rem}

\subsection{Stability via test configurations}\label{sec-TC}

In this section we recall the definition of test configurations and stability of log Fano varieties.
\begin{defn}[{\cite{Tia97, Don02}, see also \cite{LX14}}]\label{defn-TC}
Let $(Z, Q, L)$ be as before.
\begin{enumerate}[(1)]
\item A test configuration of $(Z, L)$, denoted by $(\mcZ, \mcL, \eta)$ or simply by $(\mcZ, \mcL)$, consists of the following data
\begin{itemize}
\item A variety $\mcZ$ admitting a $\bC^*$-action, which is generated by a holomorphic vector field $\eta$, and a $\bC^*$-equivariant morphism $\pi: \mcZ\rightarrow \bC$, where $\bC^*$ acts on $\bC$ by the standard multiplication.
\item A $\bC^*$-equivariant $\pi$-semiample $\bQ$-Cartier divisor $\mcL$ on $\mcY$ such that there is a $\bC^*$-equivariant isomorphism $\mathfrak{i}_\eta: (\mcZ, \mcL)|_{\pi^{-1}(\bC\backslash\{0\})}\cong (Z, L)\times \bC^*$.
\end{itemize}
Let $\mcQ:=Q_{\mcZ}$ denote the closure of $Q\times\bC^*$ in $\mcZ$ under the inclusion $Q\times\bC^*\subset Z\times\bC^* \stackrel{\mathfrak{i}_\eta}{\cong} \mcZ\times_\bC\bC^*\subset \mcZ$. We say that $(\mcZ, \mcQ, \mcL)$ is a test configuration of $(Z, Q, L)$. 

Denote by $\bar{\pi}: (\bar{\mcZ}, \bar{\mcQ}, \bar{\mcL})\rightarrow \bP^1$ the natural equivariant compactification of $(\mcZ, \mcQ, \mcL)\rightarrow \bC$ obtained by using the isomorphism $\mathfrak{i}_\eta$ and then adding a trivial fiber over $\{\infty\}\in \bP^1$. 

\item
A test configuration is called normal if $\mcZ$ is a normal variety. We will always consider normal test configurations in this paper. 

A test configuration $(\mcZ, \mcQ, \mcL)$ is called dominating if there exists a $\bC^*$-equivariant birational morphism $\rho: (\mcZ, \mcQ) \rightarrow (Z, Q)\times\bC$.

Two test configurations $(\mcZ_i, \mcQ_i, \mcL_i), i=1, 2$ are called equivalent, if there exists a family $(\mcZ_3, \mcQ_3)$ that $\bC^*$-equivariantly dominates both test configurations via $q_i: (\mcZ_3, \mcQ_3)\rightarrow (\mcZ_i, \mcQ_i)$, $i=1,2$ and satisfies $q_1^*\mcL_1=q_2^*\mcL_2$. Note that any test configuration is equivalent to a dominating test configuration.
\item
Assume $L=\lambda^{-1}(-K_Z-Q)$. 
For any normal test configuration $(\mcZ, \mcQ, \mcL)$ of $(Z, Q, L)$, define the divisor 
$\Delta_{(\mcZ, \mcQ, \mcL)}$ to be the $\bQ$-divisor supported on $\mcZ_0$ that is given by:
\begin{equation}
\Delta:=\Delta_{(\mcZ, \mcQ, \mcL)}=-K_{\mcZ/\bC}-\mcQ-\lambda \cdot \mcL.
\end{equation}

\item
Assume that $(Z, Q)$ is a log Fano pair and $L=\lambda^{-1}(-K_Z-Q)$ for some $\lambda>0\in \bQ$. A test configuration of $(Z, Q, L)$ is called a special test configuration, if the following conditions are satisfied:
\begin{itemize}
\item $\mcZ$ is normal, and $\mcZ_0$ is an irreducible normal variety;
\item $\mcL\sim_{\bC} \lambda^{-1} (-K_{\mcZ/\bC}-\mcQ)$, which is a $\pi$-ample $\bQ$-Cartier divisor;
\item $(\mcZ, \mcZ_0+\mcQ)$ has plt singularities.
\end{itemize}

\end{enumerate}

\end{defn}

For any (dominating) normal test configuration $(\mcZ, \mcQ, \mcL)$ of $\left(Z, Q, L=\lambda^{-1}(-K_Z-Q)\right)$, we attach the following well-known invariants (where $V=(2\pi)^n L^{\cdot n}$): 
\begin{eqnarray}
\bfE^\NA(\mcZ,  \mcL)&=&
\frac{\left(\bar{\mcL}^{\cdot n+1}\right)}{n+1},\label{eq-ENA} \\
\bfJ^\NA(\mcZ, \mcL)&=&
\left(\bar{\mcL}\cdot \rho^*(L\times\bP^1)^{\cdot n}\right)- \frac{\left(\bar{\mcL}^{\cdot n+1}\right)}{n+1}, \label{eq-JNA} \\
\CM(\mcZ, \mcQ, \mcL)&=&\frac{1}{\lambda} K_{(\bar{\mcZ}, \bar{\mcQ})/\bP^1}\cdot \bar{\mcL}^{\cdot n}+\frac{n}{n+1}\bar{\mcL}^{\cdot n+1}, \label{eq-defCM} \\
\bL^\NA(\mcZ, \mcQ, \mcL)&=&\frac{V}{\lambda}\cdot\left( \lct\left(\mcZ, \mcQ+\Delta; \mcZ_0\right)-1\right),\\
\bfD^\NA(\mcZ, \mcQ, \mcL)&=&\frac{- \bar{\mcL}^{\cdot n+1}}{n+1}+\bL^\NA(\mcZ, \mcQ, \mcL). \label{eq-bfDTC}
\end{eqnarray}

The following result is now well known:
\begin{prop}[{see \cite{Berm15, BHJ19}}]\label{prop-BHJslope}
Let $(\mcZ, \mcQ, \mcL)$ be a normal test configuration of $(Z, Q, L)$. Let $\Phi=\{\vphi(s)\}$ be a locally bounded and positively curved Hermitian metric on $\mcL$. Then the following limits hold true:
\begin{equation}
\lim_{s\rightarrow+\infty} \frac{\bfF(\vphi(s))}{s}=\bfF^\NA(\mcZ, \mcQ, \mcL),
\end{equation}
where the energy $\bfF$ is any one from $\{\bfE, \bfJ, \bfL, \bfD\}$.
\end{prop}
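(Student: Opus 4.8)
The plan is to prove the slope identity separately for $\bfF=\bfE$, $\bfF=\bfJ$ and $\bfF=\bL$; the case $\bfF=\bfD$ then follows at once from $\bfD=-\bfE+\bL$ (see \eqref{eq-DB}) and $\bfD^\NA=-\bfE^\NA+\bL^\NA$ (see \eqref{eq-bfDTC}). Two harmless reductions come first. Since the psh ray $\{\vphi(s)\}$ depends only on $\mcL$ restricted to $\mcZ\setminus\mcZ_0\cong Z\times\bC^*$, and since $\bfE^\NA$, $\bfJ^\NA$, $\bL^\NA$ are all invariant under passing to an equivalent test configuration, I may assume $(\mcZ,\mcQ,\mcL)$ is normal and dominates $(Z,Q)\times\bC$ via some $\rho$, and fix once and for all a normal model of $\bar{\mcZ}$ dominating $Z\times\bP^1$ on which every intersection number below is computed (when $Z$ is singular, the non-pluripolar Monge--Amp\`ere products are pulled back to this model, as usual). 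Moreover, replacing $\Phi$ by another locally bounded positively curved metric on $\mcL$ alters $\vphi(s)$ by a quantity bounded uniformly in $s$, hence changes each of $\bfE,\bfJ,\bL$ only by $O(1)$; so it suffices to treat one convenient choice, and I would take $\Phi$ smooth and extending to a smooth $S^1$-invariant metric $\bar\Phi$ on $\bar{\mcL}$ over $\bar{\mcZ}$ with fibrewise-positive curvature.

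For $\bfF=\bfE$, I would first observe that $s\mapsto\bfE(\vphi(s))$ is convex, because $\Phi$ is psh, so the limit $\lim_{s\to+\infty}\bfE(\vphi(s))/s$ exists in $(-\infty,+\infty]$ and equals $\lim_{s\to+\infty}\frac{d}{ds}\bfE(\vphi(s))=\lim_{s\to+\infty}\int_Z\dot\vphi(s)\,(\sddb\vphi(s))^n$. This limit is identified with $\bfE^\NA(\mcZ,\mcL)=(\bar{\mcL}^{\,n+1})/(n+1)$ (see \eqref{eq-ENA}) by a Stokes/Fubini computation over the disk bundle $\bar\pi^{-1}(\{|\tau|\le 1\})$: comparing $\bar\Phi$ to a fixed smooth reference metric and invoking Bedford--Taylor theory, one integrates $(\sddb\bar\Phi)^{\,n+1}$ over the total space, recognises it as the top self-intersection of $\bar{\mcL}$, and keeps track of the $\log|\tau|$-factors to get $\bfE(\vphi(s))=s\,\bfE^\NA(\mcZ,\mcL)+O(1)$. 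This is the one place where the locally bounded hypothesis is genuinely used: it is what makes the $(n+1)$-fold Bedford--Taylor product of $\sddb\bar\Phi$ well defined and equal to the intersection number. For $\bfF=\bfJ$, I would write $\bfJ(\vphi(s))=\int_Z(\vphi(s)-\psi_0)(\sddb\psi_0)^n-\bfE(\vphi(s))$ with $\psi_0$ a fixed smooth reference; the second term is already dealt with, and the first is linear in $\vphi(s)$, so the same family computation, now pairing one copy of $\sddb\bar\Phi$ against $n$ copies of the class $\rho^*(L\times\bP^1)$, gives $\int_Z(\vphi(s)-\psi_0)(\sddb\psi_0)^n=s\,(\bar{\mcL}\cdot\rho^*(L\times\bP^1)^{\,n})+O(1)$; subtracting yields $\lim_{s\to+\infty}\bfJ(\vphi(s))/s=\bfJ^\NA(\mcZ,\mcL)$, see \eqref{eq-JNA}.

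For $\bfF=\bL$, the object to analyse is $\bL(\vphi(s))=-\tfrac{V}{\lambda}\log\int_Z e^{-\lambda\vphi(s)}|s_Q|^{-2}$ (see \eqref{eq-LB}), where the adapted measures $e^{-\lambda\vphi(s)}|s_Q|^{-2}$ are the restrictions to the fibres over $\tau=e^{s}\in\bC^*$ of a single global family of measures on $\mcZ$. Its growth as $s\to+\infty$ is governed by how singular that family becomes along $\mcZ_0$, and I would compute it via Berman's asymptotic expansion from \cite{Berm15}: passing to a log resolution of $(\bar{\mcZ},\mcZ_0)$ and reading off discrepancies, with $\Delta=\Delta_{(\mcZ,\mcQ,\mcL)}=-K_{\mcZ/\bC}-\mcQ-\lambda\mcL$ entering through the discrepancy computation, one obtains
\begin{equation*}
-\log\int_Z e^{-\lambda\vphi(s)}|s_Q|^{-2}=\lambda\,s\,(\lct(\mcZ,\mcQ+\Delta;\mcZ_0)-1)+o(s).
\end{equation*}
Multiplying by $-V/\lambda$ gives $\bL(\vphi(s))/s\to\tfrac{V}{\lambda}\cdot\lambda(\lct-1)=\bL^\NA(\mcZ,\mcQ,\mcL)$. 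Effectivity of $Q$ (hence of $\mcQ$) plays no role: the expansion is local near $\mcZ_0$, and only the sub-Klt hypothesis, which keeps all the relevant densities locally integrable, is used.

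The hard part will be the $\bL$-case, since there the analytic quantity $\log\int_Z e^{-\lambda\vphi(s)}|s_Q|^{-2}$ must be turned into the birational invariant $\lct(\mcZ,\mcQ+\Delta;\mcZ_0)$; this rests on Berman's expansion together with careful bookkeeping of multiplier ideals and discrepancies along the degeneration, with the possible non-effectivity of $\mcQ$ carried through (which, as noted, causes no essential trouble). The $\bfE$- and $\bfJ$-cases are comparatively routine, the only delicate point being the identification of a merely locally bounded positively curved metric with a genuine top intersection number, handled by Bedford--Taylor theory on the normal total space $\bar{\mcZ}$.
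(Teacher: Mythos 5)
The paper does not give its own proof of this proposition; it is cited directly to \cite{Berm15, BHJ19}, so there is nothing internal to compare against. Your sketch is correct and tracks the standard route in those references: the $\bfE$ and $\bfJ$ slopes via intersection theory on a dominating normal compactification (this is exactly the Duistermaat--Heckman / Monge--Amp\`ere energy asymptotics of BHJ), and the $\bL$ slope via Berman's Lelong-number computation turning the Bergman-type integral into $\lct(\mcZ,\mcQ+\Delta;\mcZ_0)$, with $\bfD$ obtained by taking the difference. One small logical wrinkle: you first reduce to a smooth $\bar\Phi$ and then say the locally-bounded hypothesis is "genuinely used" to make the Bedford--Taylor product well defined. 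But after the reduction, $\bar\Phi$ is smooth and Bedford--Taylor is not needed; the locally-bounded hypothesis is really consumed one step earlier, in the $O(1)$ comparison that justifies replacing the given $\Phi$ by a smooth one. It is worth stating that cleanly, since it is the only place the regularity hypothesis enters. Your remark that effectivity of $Q$ is not needed for the $\bL$-expansion (only the sub-klt / local-integrability condition) is consistent with, and in fact anticipated by, the paper's own Proposition \ref{prop-LBexpan}, where the authors redo Berman's expansion precisely to handle the non-effective twist $B_\epsilon$ and note that \cite{Berm15, BHJ19} assume effectivity; so if you intend Proposition \ref{prop-BHJslope} to cover the non-effective case too, you should either appeal to Proposition \ref{prop-LBexpan} for the $\bL$-part or spell out that Berman's argument localizes near $\mcZ_0$ and only uses integrability.
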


\begin{defn}\label{defn-uniformK}
\begin{enumerate}[(1)]
\item
$(Z, Q)$ is called {\it uniformly K-stable} if there exists $\gamma>0$ such that $\CM(\mcZ, \mcQ, \mcL)\ge \gamma\cdot \bfJ^\NA(\mcZ, \mcL)$ for any normal test configuration $(\mcZ, \mcQ, \mcL)$ of $(Z, Q, L)$.

\item
$(Z, Q)$ is called {\it uniformly Ding-stable} if there exists $\gamma>0$ such that $\bfD^\NA(\mcZ, \mcQ, \mcL)\ge \gamma\cdot \bfJ^\NA(\mcZ,  \mcL)$ for any normal test configuration $(\mcZ, \mcQ, \mcL)$ of $(Z, Q, L)$. 

For convenience, we will call $\gamma$ to be a slope constant.

\end{enumerate}

\begin{rem}\label{rem-lambda}
The rescaling parameter $\lambda$ is included in our discussion to make the following arguments more flexible. By checking the rescaling properties of functionals, it is easy to see that if the above statements hold for one $\lambda>0$ then they hold for any $\lambda>0$ with the same slope constant.
\end{rem}

\end{defn}
For any special test configuration $(\mcZ^s, \mcQ^s, \mcL^s)$, its CM weight coincides with its $\bfD^\NA$ invariant, which coincides with the original Futaki invariant of the central fibre (as generalized by Ding-Tian):
\begin{equation}\label{eq-CMstc}
{\rm CM}(\mcZ^s, \mcQ^s, \mcL^s)=
\bfD^\NA(\mcZ^s, \mcQ^s, \mcL^s)=
-\frac{(-K_{(\overline{\mcZ^s}, \overline{\mcQ^s})/\bP^1})^{\cdot n+1}}{n+1}=\Fut_{(\mcZ^s_0, \mcQ^s_0)}(\eta).
\end{equation}

By the work in \cite{BBJ15, Fuj16} (see also \cite{LX14}), to test uniform K-stability, one only needs to test on special test configurations. As a consequence,
 \begin{thm}[\cite{BBJ15, Fuj16}]\label{thm-BBJ}
For a log Fano pair $(X, D)$, $(X, D)$ is uniformly K-stable if and only if $(X, D)$ is uniformly Ding-stable.
\end{thm}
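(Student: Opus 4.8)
The plan is to push both stability notions onto the class of special test configurations, where \eqref{eq-CMstc} makes them coincide tautologically. Recall from Definition \ref{defn-uniformK} that the two conditions differ only in that the CM weight $\CM(\mcZ, \mcQ, \mcL)$ is replaced by the Ding invariant $\bfD^\NA(\mcZ, \mcQ, \mcL)$, the test quantity $\bfJ^\NA(\mcZ, \mcL)$ being the same. The implication that uniform Ding-stability gives uniform K-stability is then essentially formal: for every normal test configuration one has the pointwise estimate $\bfD^\NA(\mcZ, \mcQ, \mcL) \le \CM(\mcZ, \mcQ, \mcL)$. Comparing \eqref{eq-defCM} and \eqref{eq-bfDTC}, this says exactly that $\bL^\NA$ is dominated by the entropy part of the CM weight, which is the non-Archimedean shadow of the inequality $\bfD\le\bfM$ and is established in \cite{BHJ19, Berm15}. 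Hence any slope $\gamma$ witnessing uniform Ding-stability also witnesses uniform K-stability, since $\CM \ge \bfD^\NA \ge \gamma\,\bfJ^\NA$.

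For the converse I would invoke the Li-Xu special-degeneration machinery. Via normalization, a base change $t\mapsto t^d$, passage to the log canonical modification of $(\mcZ, \mcZ_0+\mcQ)$ to make the central fibre reduced, and a relative MMP with scaling ending at a plt pair with irreducible central fibre polarized by $\lambda^{-1}(-K_{\mcZ^s/\bC}-\mcQ^s)$, one may replace any normal test configuration $(\mcZ, \mcQ, \mcL)$ by a special one $(\mcZ^s, \mcQ^s, \mcL^s)$ in such a way that the normalized Ding invariant $d^{-1}\bfD^\NA$ does not increase while the normalized norm $d^{-1}\bfJ^\NA$ does not decrease by more than a uniform factor; the same holds verbatim with $\CM$ in place of $\bfD^\NA$. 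This is precisely the statement that uniform K-stability, respectively uniform Ding-stability, can be tested on special test configurations alone (\cite{LX14, BBJ15, Fuj16}). Since on a special test configuration \eqref{eq-CMstc} gives $\CM(\mcZ^s, \mcQ^s, \mcL^s)=\bfD^\NA(\mcZ^s, \mcQ^s, \mcL^s)$ with $\bfJ^\NA(\mcZ^s, \mcL^s)$ unchanged, uniform K-stability with slope $\gamma$ yields, for any normal $(\mcZ, \mcQ, \mcL)$ and its associated special $(\mcZ^s, \mcQ^s, \mcL^s)$,
\[
\bfD^\NA(\mcZ, \mcQ, \mcL) \;\ge\; d^{-1}\bfD^\NA(\mcZ^s, \mcQ^s, \mcL^s) \;=\; d^{-1}\CM(\mcZ^s, \mcQ^s, \mcL^s) \;\ge\; d^{-1}\gamma\,\bfJ^\NA(\mcZ^s, \mcL^s) \;\ge\; \gamma'\,\bfJ^\NA(\mcZ, \mcL)
\]
for a uniform $\gamma'>0$, which is uniform Ding-stability.

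The step I expect to be the main obstacle is the reduction to special test configurations itself, namely the claim that the Li-Xu degeneration process does not worsen the normalized $\CM$ or $\bfD^\NA$. This rests on the MMP for the Klt log Fano pair $(X,D)$ and on a chain of intersection-number estimates on the compactified families $(\bar{\mcZ},\bar{\mcQ},\bar{\mcL})\to\bP^1$, and it is exactly what is being invoked from \cite{LX14, BBJ15, Fuj16}. Should one prefer to avoid the explicit degeneration, an alternative route, available once the valuative material of Section \ref{sec-valuative} is in place, is to note that uniform K-stability and uniform Ding-stability of $(X,D)$ are both equivalent to the stability threshold of $(X,D)$ being strictly larger than $1$, whereupon the theorem reduces to two applications of one and the same valuative criterion.
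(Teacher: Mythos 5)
Your proposal is correct and matches the route the paper intends: the paper itself offers no self-contained proof of Theorem~\ref{thm-BBJ}, instead pointing to \cite{BBJ15, Fuj16} (and \cite{LX14}) and observing that once one knows the test can be reduced to special test configurations, the equivalence is automatic from \eqref{eq-CMstc}. Your decomposition into the formal direction (via the non-Archimedean Ding--Mabuchi inequality $\bfD^\NA \le \CM$, proved in \cite{BHJ19, Berm15}) and the Li--Xu MMP reduction for the Ding invariant (normalization, base change $t\mapsto t^d$, log canonical modification, relative MMP with scaling) is precisely the mechanism behind that citation; the chain
$\bfD^\NA(\mcZ, \mcQ, \mcL) \ge d^{-1}\bfD^\NA(\mcZ^s, \mcQ^s, \mcL^s) = d^{-1}\CM(\mcZ^s, \mcQ^s, \mcL^s) \ge d^{-1}\gamma \bfJ^\NA(\mcZ^s, \mcL^s) \ge \gamma' \bfJ^\NA(\mcZ, \mcL)$
is the right way to assemble the pieces. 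One small economy worth noting: you only need the MMP reduction for the Ding functional, not for the CM weight (the CM direction is disposed of by $\bfD^\NA \le \CM$), so the sentence ``the same holds verbatim with $\CM$'' is true but superfluous. Your closing remark about the valuative route is a genuine alternative and in fact the one most compatible with Section~\ref{sec-valuative}: both uniform notions are equivalent to $\delta(X,D)>1$ by \cite{Fuj16, Fuj17a, BlJ17, BoJ18b}, and this bypasses the explicit degeneration entirely. Either route is acceptable here; the paper leans on the former through its citations but states the latter equivalence separately as Theorem~\ref{thm-MvsD}.
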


\subsection{Non-Archimedean functionals and valuative criterion}\label{sec-valuative}

Here we briefly recall the non-Archimedean formulation of K-stability/Ding-stability following the works in \cite{BBJ18, BHJ17, BoJ18a}. Let $(Z, Q, L)$ be the polarized projective pair as before. We denote by $(Z^\NA, Q^\NA, L^\NA)$ the Berkovich analytification of $(Z, Q, L)$ with respect to the trivial absolute value on the ground field $\bC$. $Z^\NA$ is a topological space, whose points can be considered as semivaluations on $Z$, i.e. valuations $v: \bC(W)^*\rightarrow \bR$ on function field of subvarieties $W$ of $Z$, trivial on $\bC$. The topology of $Z^\NA$ is generated by functions
of the form $v\mapsto v(f)$ with $f$ a regular function on some Zariski open set $U\subset Z$. One can show that $Z^\NA$ is compact and Hausdorff.
Let $Z^{\rm div}_\bQ$ be the set of valuations over $Z$ which are of the form $\lambda\cdot \ord_E$ where $\ord_E\in Z^{\rm div}$ is a divisorial valuation over $Z$.
Then $Z^{\rm div}_\bQ \subset Z^\NA$ is dense.

For any $v\in Z^{\rm div}_\bQ$, let $G(v)$ denote the standard Gauss extension:
for any $f=\sum_{i\in \bZ} f_i t^i\in \bC(Z\times\bC)$ with $f_i\in \bC(Z)$, set
\begin{equation}
G(v)\left(\sum_i f_i t^i\right)=\min_i \{v(f_i)+i\}.
\end{equation}
In this paper, we will identify a non-Archimedean metric with its associated function on $Z^{\rm div}_\bQ$.
\begin{defn}\label{defn-TCNA}
Let $(\mcZ, \mcL)$ be a dominating test configuration of $(Z, L)$ with $\rho: \mcZ\rightarrow Z\times\bC$ being a $\bC^*$-equivariant morphism.  The non-Archimedean metric defined by $(\mcZ, \mcL)$ is represented by the following function on $Z^{\rm div}_\bQ$:
\begin{equation}
\phi_{(\mcZ, \mcL)}(v)=G(v)\left(\mcL-\rho^*(L\times\bC)\right).
\end{equation}
The set of non-Archimedean metrics obtained in this way will be denoted as $\mcH^\NA(Z, L)$.
If $(\mcZ, \mcL)$ is obtained as the normalized blowups of $(Z, L)\times \bC$ along some flag ideal sheaf $\mcI$:
\begin{equation}
\mcZ=\text{ normalization of } Bl_\mcI (Z\times\bC), \quad \mcL=\pi^*(L\times\bC)-c E
\end{equation}
for some $c\in \bQ>0$, where $\pi: \mcZ\rightarrow Z\times\bC$ is the natural projection and $E$ is the exceptional divisor of blowup, then we have:
\begin{equation}\label{eq-phimcI}
\phi_{\mcI}(v):=-G(v)(c E)=-c\cdot G(v)(\mcI), \quad \sup \phi_{\mcI}=0.
\end{equation}
\end{defn}
\begin{defn}[{\cite[section 4]{BBJ18}}]\label{defn-PhiNA}
A psh ray $\Phi=\{\vphi(s)\}$ has linear growth if
\begin{equation}
\lim_{s\rightarrow+\infty} s^{-1} \sup_X (\vphi(s)-\psi_0)<+\infty.
\end{equation}
In this case, we associate a function:
$
\Phi^\NA: Z^{\rm div}_\bQ\rightarrow \bR, 
$
by setting for any $v\in Z^{\rm div}_\bQ$, 
$$\Phi^\NA(v)=-G(v)(\Phi),$$
which is the negative of the generic Lelong number of $\Phi$ on a suitable blow-up where the center of $G(v)$ is of codimension 1.
\end{defn}
\begin{exmp}\label{exmp-TCsubgeo}
For each normal test configuration $(\mcZ, \mcL)$ of $(Z, L)$, one can find a psh ray with linear growth $\Phi_{(\mcZ, \mcL)}$ such that $\Phi^\NA_{(\mcZ, \mcL)}=\phi_{(\mcZ, \mcL)}$. Indeed, it is well-known that for $m\gg 1$ sufficiently divisible, one has an equivariantly embedding $\iota_m: \mcZ\rightarrow \bP^{N_m}\times\bC$ with $N_m+1=\dim_{\bC} H^0(Z, mL)$ and $\iota_m^*\cO_{\bP^{N_m}}(1)\sim_\bC m \mcL$ where $\bC^*$ acts on $(\bP^{N_m}, \mcO_{\bP^{N_m}}(1))$ via a one-parameter subgroup in $GL(N_m+1,\bC)$. Then the psh ray can be chosen to be $\iota_m^*\left(p_1^*\frac{1}{m}\psi_{\rm FS}\right)$ where $\psi_{\rm FS}$ is the canonical Fubini-Study metric on $\cO_{\bP^{N_m}}(1)$ and $p_1$ is the projection to the first factor.
\end{exmp}
For any $\phi\in \mcH^\NA(Z, L)$, the non-Archimedean functionals can be defined formally as (where $V=(2\pi)^n L^{\cdot n}$):
\begin{eqnarray}
\bfE^\NA(\phi)&:=&\bfE^\NA_{L}(\phi)=\frac{1}{n+1}\sum_{j=0}^n \int_{X^\NA}\phi(\omega^\NA_\phi)^j\wedge (\omega^\NA)^{n-j},\\
\bfJ^\NA(\phi)&:=&\bfJ^\NA_L(\phi)=V\cdot \sup \phi-\bfE^\NA(\phi), \label{eq-JNAphi}\\
&&\nonumber \\
\bL^\NA(\phi)&:=&\bL^\NA_{(Z,Q)}(\phi)=\frac{V}{\lambda} \cdot \inf_{v\in Z^{\rm div}_\bQ} \left(A_{(Z, Q)}(v)+\lambda \phi(v)\right)\\
\bfD^\NA(\phi)&:=&\bfD^\NA_{(Z, Q)}(\phi)=-\bfE^\NA(\phi)+\bfL^\NA(\phi).
\end{eqnarray}
They recover the non-Archimedean functional for test configurations: for functional ${\bf F}$ appearing in \eqref{eq-ENA}-\eqref{eq-bfDTC}:
$
{\bf F}^\NA(\phi_{(\mcZ, \mcL)})={\bf F}^\NA(\mcZ, \mcQ, \mcL).
$

We will need the valuative criterion for the uniform Ding-stability studied in \cite{BoJ18b, Fuj16}. Let $Z$ be a projective variety with polarization $L$. 
For any divisorial valuation $\ord_E$ over $Z$, let $\mu': Y'\rightarrow Z$ be a birational morphism such that $E$ is an irreducible Weil divisor on $Y'$. 
Set \begin{eqnarray*}
\vol(L-x E)&:=&\lim_{m\rightarrow+\infty} \frac{h^0(Y', m\mu'^*L-\lceil mx\rceil E)}{m^n/n!} \quad \text{ for any } x\in \bR,\\
S_{L}(E)&:=&\frac{1}{L^n}\int_0^{+\infty}\vol(L-xE)dx.
\end{eqnarray*}
Following \cite{Fuj16, BlJ17, BoJ18b}, we define
the stability threshold as:
\begin{equation}\label{eq-deltaXY}
\delta(Z, Q):=\inf_{\ord_E\in Z^{\rm div}} \frac{A_{(Z, Q)}(E)}{S_{-K_Z-Q}(E)}.
\end{equation}
In light of the work \cite{BoJ18b}, the following criterion could be derived by using the non-Archimedan version of the equivalence between properness of Mabuchi energy and Ding energy. Note that such type of criterion for K-(semi)stability by using valuations first appeared in the first author's work \cite{Li15} and also in \cite{Fuj16}.
\begin{thm}\label{thm-MvsD}
\begin{enumerate}
\item {\rm (\cite{Fuj16, Fuj17a})} Let $(X, D)$ be a log Fano pair. Then $(X, D)$ is uniformly Ding-stable if and only if $\delta(X, D)>1$. 
\item {\rm (see \cite[Theorem 7.3]{BBJ18})} 
Assume $(Y, Q)$ is a sub-klt pair with $Y$ smooth. Then
$(Y, Q)$ is uniformly Ding-stable if and only if $\delta(Y, Q)>1$.
Moreover, if $\delta(Y, Q)> 1$, then $\bfD_{(Y, Q)}^\NA\ge \left(1-\delta(Y, Q)^{-1/n}\right)\bfJ^\NA$ on $\mcH^\NA(L)$.

\end{enumerate}
\end{thm}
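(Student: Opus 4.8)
The plan is to deduce both statements from one computation identifying the optimal slope in the Ding-stability inequality with an explicit function of the stability threshold. Write $L=\lambda^{-1}(-K_Z-Q)$, where $(Z,Q)$ stands for the log Fano pair $(X,D)$ in part (1) or the smooth sub-Klt pair $(Y,Q)$ in part (2). The target is
\[
\inf_{\phi\in\mcH^\NA(L),\ \phi\not\equiv\mathrm{const}}\ \frac{\bfD^\NA_{(Z,Q)}(\phi)}{\bfJ^\NA(\phi)}\;=\;1-\delta(Z,Q)^{-1/n};
\]
granting this, uniform Ding-stability (with \emph{some} $\gamma>0$) is equivalent to $1-\delta^{-1/n}>0$, i.e.\ $\delta>1$, and one may then take $\gamma=1-\delta^{-1/n}$, which is exactly the bound asserted in part (2). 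Part (1) is Fujita's theorem \cite{Fuj16, Fuj17a} (it also follows by combining the valuative criterion for uniform K-stability of \cite{Li15, Fuj16} with the equivalence of uniform K- and Ding-stability in Theorem \ref{thm-BBJ}); part (2) is \cite[Theorem 7.3]{BBJ18}. We recall the mechanism, since it is this that will be adapted in Section \ref{sec-proof}.

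First I would establish the lower bound $\bfD^\NA_{(Z,Q)}(\phi)\ge(1-\delta^{-1/n})\bfJ^\NA(\phi)$ for every $\phi\in\mcH^\NA(L)$. After normalizing $\sup\phi=0$, so that $\bfJ^\NA(\phi)=-\bfE^\NA(\phi)\ge0$, the two ingredients of $\bfD^\NA=-\bfE^\NA+\bfL^\NA$ --- namely $\bfE^\NA(\phi)$ and $\bfL^\NA_{(Z,Q)}(\phi)=\frac{V}{\lambda}\inf_{v}\bigl(A_{(Z,Q)}(v)+\lambda\phi(v)\bigr)$ --- are controlled by the one-variable functions $x\mapsto\vol(L-xv)$ attached to divisorial valuations $v$, through the Okounkov-body/integration formulas of \cite{BoJ18b}. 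Feeding in the hypothesis $A_{(Z,Q)}(v)\ge\delta\cdot\lambda\,S_L(v)$ for all $v$, together with the Brunn--Minkowski concavity of $x\mapsto\vol(L-xv)^{1/n}$, reduces the desired inequality to an elementary one-variable estimate in which the exponent $1/n$ appears as the Brunn--Minkowski exponent. For the smooth pair $(Y,Q)$ this argument runs directly on $\mcH^\NA(Y,L)$ and uses nothing about the sign of the coefficients of $Q$: sub-Klt-ness enters only through $A_{(Y,Q)}(\cdot)>0$, which is what keeps $\bfL^\NA$ and $\delta$ well defined. For the possibly singular $X$ one first reduces to testing on special test configurations, as in \cite{LX14, Fuj16}, and then runs the same computation. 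For the converse, when $\delta(Z,Q)\le1$ one picks a prime divisor $E$ over $Z$ with $A_{(Z,Q)}(E)/(\lambda S_L(E))$ arbitrarily close to $\delta$ and tests on the divisorial non-Archimedean metrics $\phi_{c,E}$ of Definition \ref{defn-TCNA} (for singular $X$, on special test configurations approximating them), optimizing over the parameter $c$; this produces non-Archimedean metrics along which $\bfD^\NA_{(Z,Q)}/\bfJ^\NA$ tends to a non-positive value (in fact to the infimum above), so that no uniform slope $\gamma>0$ can exist.

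The hard part is the quantitative lower bound with the sharp constant $1-\delta^{-1/n}$ in part (2). It rests on the full non-Archimedean pluripotential calculus of \cite{BoJ18b}: the identification of $\bfE^\NA$, $\bfJ^\NA$, $\bfL^\NA$ with volumes of filtered/Okounkov-body data, the differentiability of the volume function, and the concavity of $\vol^{1/n}$ --- and it is exactly here, rather than in any positivity of $Q$, that smoothness of $Y$ is used. The feature to retain for Section \ref{sec-proof} is that all of these non-Archimedean inputs are insensitive to $Q$ being non-effective, so that both the criterion $\delta(Y,Q)>1\Leftrightarrow$ uniform Ding-stability and the explicit bound $\bfD^\NA_{(Y,Q)}\ge(1-\delta(Y,Q)^{-1/n})\bfJ^\NA$ persist for the auxiliary sub-Klt pairs $(Y,B_\epsilon)$ with $B_\epsilon$ not necessarily effective.
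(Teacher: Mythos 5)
Your outline is directionally right and you correctly identify the structural point that the paper itself emphasizes — that the Boucksom--Jonsson machinery on a smooth $Y$ is insensitive to the sign of the coefficients of $Q$, so that the criterion and the explicit slope $1-\delta^{-1/n}$ persist for sub-Klt pairs. But the central quantitative step, the inequality $\bfD^\NA_{(Y,Q)}\ge (1-\delta^{-1/n})\bfJ^\NA$, is not really proved in your sketch: ``reduces to an elementary one-variable estimate'' via Brunn--Minkowski hides exactly the computation one needs to carry out, and it is not obvious how the sharp constant falls out of that route without further work.

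The paper's argument has a cleaner and more specific structure that you should compare against. Normalize $\lambda=1$ and $\sup\phi=0$, set $\delta=\delta(Y,Q)$, and consider the \emph{rescaled} metric $\delta^{-1}\phi\in\mcH^\NA(Y,L)$. The hypothesis $A_{(Y,Q)}(v)\ge\delta\,S_L(v)$ is combined with the key inequality from \cite[Proposition 7.5]{BoJ18a},
\[
\bfE^\NA_L(\delta^{-1}\phi)\le V\cdot\inf_{v\in Y^{\rm div}_\bQ}\bigl(S_L(v)+\delta^{-1}\phi(v)\bigr),
\]
to obtain $\bL^\NA_{(Y,Q)}(\phi)\ge \delta\,\bfE^\NA(\delta^{-1}\phi)$. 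Then, since $\sup\phi=0$ gives $\bfJ^\NA=-\bfE^\NA$,
\[
\bfD^\NA_{(Y,Q)}(\phi)\ge \delta\,\bfE^\NA(\delta^{-1}\phi)-\bfE^\NA(\phi)=\delta\,\bfJ^\NA(\delta^{-1}\phi)-\bfJ^\NA(\phi)\ge\bigl(1-\delta^{-1/n}\bigr)\bfJ^\NA(\phi),
\]
where the last step is the non-Archimedean Ding inequality of \cite[Lemma~6.17]{BoJ18a}. The Brunn--Minkowski exponent $1/n$ enters only through that last black-boxed lemma. This rescaling trick is what makes the computation a two-line estimate rather than a direct Okounkov-body calculation, and it is the part your outline omits. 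For the converse, the paper does not need to optimize a one-parameter family $\phi_{c,E}$: it tests the finite-energy extension of the slope inequality directly against the metrics $\phi_v$ of \cite{BoJ18b} attached to a divisorial valuation $v$, for which one already knows the exact identities $\tfrac{1}{V}\bfE^\NA(\phi_v)=S_L(v)$ and $\tfrac{1}{n}S_L(v)\le\tfrac{1}{V}\bfJ^\NA(\phi_v)\le n\,S_L(v)$, immediately yielding $A_{(Y,Q)}(v)\ge(1+\gamma n^{-1})S_L(v)$. So your contrapositive ``optimize over $c$'' formulation would work but is strictly more effort than necessary. In short: your plan calls the right machinery and lands on the right conclusion, but it misses the rescaling step that is the actual engine of the sharp bound, and your converse is less economical than the one in the paper.
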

Fujita proved the first item using purely algebro-geometric techniques (e.g. MMP as used in \cite{LX14}) which also work well for any $\bQ$-Fano variety. The proof of the second item is based on Boucksom-Jonsson's non-Archimedean formulation (\cite{BoJ18a, BoJ18b}). 
We emphasize that the key feature of the second item is that $Q$ is allowed to be non-effective, and this will be very important for our argument. 
On the other hand, since in \cite{BBJ18} the twisting is assumed to be a klt current which is by definition quasi-positive, we give its proof  following \cite[Proof of Theorem 7.3]{BBJ18}
to show that it indeed works for the non-effective $Q$ at hand.
\begin{proof}[Proof of \ref{thm-MvsD}.2]
Because of the remark \ref{rem-lambda}, we can assume $\lambda=1$ so that $L=-K_Y-Q$.
Assume $\delta:=\delta(Y, Q)>1$. By assumption $A_{(Y,Q)}(v)\ge \delta S_L(v)$ for any divisorial valuation $v\in Y^{\rm div}_{\bQ}$. Pick any $\phi\in \cH^\NA(Y, L)$ with  $\sup\phi=0$. Since $\delta\ge 1$, $\delta^{-1}\phi\in \cH^\NA(Y, L)$. Then by \cite[Proposition 7.5]{BoJ18a}, we have the identity:
\begin{equation}
\bfE^\NA_{L}(\delta^{-1}\phi)\le V\cdot \inf_{v\in Y^{\rm div}_\bQ}\left(S_{L}(v)+\delta^{-1}\phi(v)\right).
\end{equation}
Note that here we are working on a smooth $Y$ as in \cite{BBJ18}. 
So we get:
\begin{eqnarray*}
\bL_{(Y, Q)}^\NA(\phi)&=&V\cdot \inf_{v\in Y^{\rm div}_\bQ}\left(A_{(Y,Q)}(v)+\phi(v)\right)\ge V\cdot \inf_{v\in Y^{\rm div}_\bQ}\left(\delta S_{L}(v)+\phi(v)\right) \\
&\ge & \delta \bfE^\NA(\delta^{-1}\phi).
\end{eqnarray*}
So we get:
\begin{eqnarray*}
\bfD^\NA_{(Y,Q)}(\phi)&\ge& \delta \bfE^\NA(\delta^{-1}\phi)-\bfE^\NA(\phi)=\delta \bfJ^\NA(\delta^{-1}\phi)-\bfJ^\NA(\phi)\\
&\ge& (1-\delta^{-1/n})\bfJ^\NA(\phi).
\end{eqnarray*} 
The last inequality is the non-Archimedean version of Ding's inequality and is proved in \cite[Lemma 6.17]{BoJ18a}. 

Conversely, assume $(Y, Q)$ is uniformly Ding-stable. By using the definition \ref{defn-uniformK}, there exists $\gamma>0$ such that for any $\phi\in \mcH^\NA(Y, L)$: 
\begin{eqnarray*}
\bfL^\NA(\phi)=\bfD^\NA(\phi)+\bfE^\NA(\phi)\ge \gamma\cdot \bfJ^\NA(\phi)+\bfE^\NA(\phi).
\end{eqnarray*}
As shown in \cite{BBJ18, BoJ18b}, the above inequality actually holds for any finite energy non-Archimdedean metrics. In particular, it holds for $\phi=\phi_v$ which satisfies $\phi_v(v)=0$ and
\begin{equation}
\frac{1}{V}\bfE^\NA(\phi_v)=S_L(v), \quad \frac{1}{n}S_L(v)\le \frac{1}{V}\bfJ^\NA(\phi_v)\le n S_L(v).
\end{equation}
On the other hand, $A_{(Y,Q)}(v)\ge \inf_{v\in Y^{\rm div}_\bQ}(A_{(Y,Q)}(v)+\phi_v(v))=\frac{1}{V}\bL^\NA(\phi_v)$. So we easily get:
\begin{equation}
A_{(Y,Q)}(v)\ge (1+\gamma n^{-1})S_L(v).
\end{equation}

\end{proof}

\section{Proof of the main result}\label{sec-proof}

The rest of this paper is to prove Theorem \ref{thm-main} following the argument sketched in section \ref{sec-skmod}.

\subsection{Step 1: Constructing a destabilizing geodesic ray}\label{sec-step1}

\subsubsection{Basic construction}

The argument in this section is the same as the construction \cite{BBJ15, BBJ18} which is inspired by the earlier work in \cite{DH17, DR17} (see also \cite{Don99}).  All energy functionals in this step are on $X$ itself as defined in \eqref{eq-Ephi}-\eqref{eq-Mphi}.
Recall that by Proposition \ref{prop-proper2}, we just need to prove that the Mabuchi energy  $\bfM=\bfM_{\psi_0}$ (see \eqref{eq-Mphi}) is proper over $\hat{\mcH}(X, L)$. 

 Assume on the contrary that $\bfM=\bfM_{\psi_0}$ is not proper with slope constant $\gamma$. Then we can pick a sequence $\{u_j\}_{j=1}^\infty\in \hat{\mcH}(X, \omega)$ such that $\vphi_j={\psi_0}+u_j$ satisfies:
\[
\bfM(\vphi_j)\le \gamma \bfJ(\vphi_j)-j.
\]
We will choose $\gamma$ to be small in the last step of proof in section \ref{sec-step5} to get a contradiction.

We normalize $\vphi_j$ such that $\sup (\vphi_j-{\psi_0})=0$. The inequality $\bfM\ge C-n \bfJ$ implies $\bfJ(\vphi_j)\rightarrow+\infty$, and hence $\bfE(\vphi_j)\le -\bfJ(\vphi_j)\rightarrow-\infty$.

Denote $V=(2\pi)^n (-K_X-D)^{\cdot n}$. By Proposition \ref{prop-geod} (see \cite{Dar17, DNG18}), we can connect ${\psi_0}$ and $\vphi_j$ by a geodesic segment $\{\vphi_j(s); 0\le s\le S_j\}$ parametrized so that $S_j=-\bfE(\vphi_j)\rightarrow +\infty$. For any $s\in (0, S_j]$, we have $\bfE(\vphi_j(s))=-s$ and $\sup(\vphi_j(s)-{\psi_0})=0$.
So $\bfJ(\vphi_j(s))\le V\cdot \sup(\vphi_j(s)-{\psi_0})-\bfE(\vphi_j(s))=s \le S_j$ and $\bfM(\vphi_j)\le \gamma \cdot S_j-j\le \gamma  S_j$.
By Proposition \ref{prop-Mabconv}, we get the inequality:
\begin{equation}
\bfM(\vphi_j(s))\le \frac{S_j-s}{S_j}\bfM({\psi_0})+\frac{s}{S_j}\bfM(\vphi_j)\le \gamma s+C.
\end{equation}
Using $\bfM\ge \bfH-n \bfJ$, we get $\bfH(\vphi_j(s))\le (\gamma+n)s+C$. So for any fixed $S>0$ and $s\le S$, the metrics $\vphi_j(s)$ lie in the set:
\[
\mathcal{K}_S:=\{\vphi\in \cE^1; \sup(\vphi-{\psi_0})= 0 \text{ and } \bfH(\vphi)\le (\gamma+n) s+C \}.
\]
This is a compact subset of the finite energy space $\cE^1$ by Theorem \ref{thm-BBEGZ} from \cite{BBEGZ}. So, by arguing as in \cite{BBJ15}, after passing to a subsequence, $\{\vphi_j(s)\}$ converges to a geodesic ray
$\Phi:=\{\vphi(s)\}_{s\ge 0}$ in $\cE^1$, uniformly for each compact time interval. $\{\vphi(s)\}$ satisfies 
\begin{equation}\label{eq-EPhi*}
\sup(\vphi(s)-\psi_0)=0, \quad \bfE(\vphi(s))=-s.
\end{equation}
By the lower semicontinuity of the Mabuchi functional under strong convergence of potentials, we have:
\begin{equation}\label{eq-Dvphiupb}
\bfD(\vphi(s))\le \bfM(\vphi(s))\le \gamma s+C  \text{ for } s\ge 0.
\end{equation}
For the first well-known inequality, see \cite[Lemma 4.4.(i)]{BBEGZ}.

\subsubsection{Convexity of Mabuchi energy}\label{sec-convex}

In this section we will prove the convexity of Mabuchi energy along geodesic segments connecting two metrics from $\hat{\cH}(X, \omega)$ (see \eqref{eq-hatcH}), which was used in the above construction of destabilizing geodesic ray.
We
fix a resolution of singularities $\mu: Y\rightarrow X$ such that $\mu$ is an isomorphism over $X^\reg$, $\mu^{-1}(X^\sing)=\sum_{k=1}^{g}E_k$ is a simple normal crossing divisor and that there exist $\theta_k\in \bQ_{>0}$ for $k=1,\dots, g$ such that $E_\theta:=\sum_{k=1}^g \theta_k E_k$ satisfies $P:=P_\theta=\mu^*L-E_\theta$ is an ample $\bQ$-divisor over $Y$. There is always such a resolution which is obtained by embedding $X$ into an smooth ambient spaces and taking the strict transform of $X$ under a sequence of blowups along smooth centers (see \cite{Kol07}). We can then choose and fix a smooth Hermitian metric $\psi_P$ on $P$ such that $\sddb \psi_P>0$. 
Let $D'=\mu_*^{-1}D$ be the strict transform of $D$ under $\mu$. Then we can write:
\begin{eqnarray}
-K_Y&=&\mu^*(-(K_X+D))+\mu_*^{-1}D+\sum_k b_k E_k\nonumber \\
&=&\frac{1}{1+\epsilon}\left(\mu^*L+\epsilon(\mu^*L-E_\theta\right)+D'+\sum_k \left(b_k+\frac{\epsilon}{1+\epsilon}\theta_k\right) E_k\nonumber \\
&=&\frac{1}{1+\epsilon}L_\epsilon+D'+B'_\epsilon=\frac{1}{1+\epsilon}L_\epsilon+B_\epsilon, \label{eq-KYdec}
\end{eqnarray} 
where we have set:
\begin{equation}\label{eq-B'ep}
L_\epsilon=\mu^*L+\epsilon P, \quad 
B':=B'_\epsilon=\sum_{k} \left(b_k+\frac{\epsilon}{1+\epsilon}\theta_k\right)E_k, \quad B_\epsilon=D'+B'_\epsilon.
\end{equation}
Note $L_\epsilon$ is a perturbation of the degenerate class $\mu^*L$, which is directly related to the above decomposition of $-K_Y$.

\begin{rem}
The above construction of the perturbation $L_\epsilon$ is a natural way to get ample $\bQ$-divisors on $Y$. Such choice will also be convenient for obtaining uniform K-stability on $(Y, B_\epsilon)$ in section \ref{sec-uniDing} and getting uniform estimates in our perturbation process such as in the proof of Proposition \ref{prop-limLep}.
\end{rem}

\begin{prop}\label{prop-Mabconv}
Assume $\vphi(0), \vphi(1)\in \hat{\mcH}(X, \omega)$. Let $\Phi=\{\vphi(s), s\in [0,1]\}$ be the geodesic joining $\vphi(0)$ and $\vphi(1)$ as constructed in Proposition \ref{prop-geod}. Then for any $s\in [0,1]$, we have the inequality:
\begin{equation} \label{eq-weakconvex}
\bfM_{\psi_0}(\vphi(s))\le (1-s)\bfM_{\psi_0}(\vphi_0)+s \bfM_{\psi_0}(\vphi_1).
\end{equation} 
\end{prop}
The rest of this subsection is devoted to proving this proposition. Fix a smooth volume form $\Omega$ on $Y$. Recall that we can write the Mabuchi energy on $\PSH_\bd(L)$ in the following form (see \eqref{eq-Mabuchi1}):
\begin{eqnarray*}
\bfM_{\psi_0}(\vphi)&=&\bfH(\vphi)-({\bf I}-\bfJ)_{\psi_0}(\vphi) \\
&=&\int_X \log\frac{|s_D|^2(\sddb\vphi)^n}{e^{-\psi_0}}(\sddb \vphi)^n-\bfE_{\psi_0}(\vphi)+\int_X (\vphi-\psi_0)(\sddb \vphi)^n\\
&=&\int_Y \log \frac{(\sddb\vphi)^n}{\Omega}(\sddb\vphi)^n-\int_Y\log \frac{e^{-\psi_0}}{|s_B|^2 \Omega}(\sddb\vphi)^n\\
&&\hskip 5cm +\int_Y (\vphi-\psi_0)(\sddb\vphi)^n-\bfE_{\psi_0}(\vphi)\\
&=:&\bfH_\Omega((\sddb\vphi)^n)-\bfT((\sddb\vphi)^n)+\int_Y (\vphi-\psi_0)(\sddb\vphi)^n-\bfE_{\psi_0}(\vphi),
\end{eqnarray*}
where $B=\mu_*^{-1}D+B'_0=D'+\sum_k b_k E_k$.
Now we want to compare this with the Mabuchi energy on $\PSH_\bd(L_\epsilon)$ for the effective pair $(Y, D')$. For any $\vphi\in \PSH_\bd(L_\epsilon)$, set
\begin{equation}
\bfM_{\psi_\epsilon}(\vphi)=\int_Y \log\frac{(\sddb\vphi)^n}{\Omega}(\sddb\vphi)^n-\bfE^{Ric(\Omega)}_{\psi_\epsilon}(\vphi)+\bfE^{D'}_{\psi_\epsilon|_{D'}}(\vphi|_{D'})+n\bar{S}_\epsilon\bfE_{\psi_\epsilon}(\vphi),
\end{equation}
where
\begin{equation}
\bar{S}_\epsilon:=\frac{-(K_Y+D')\cdot L_\epsilon^{\cdot n-1}}{L_\epsilon^{\cdot n}}=\frac{\left(\frac{1}{1+\epsilon}(L_0+\epsilon P)+B'_\epsilon\right)\cdot (L_0+\epsilon P)^{\cdot n}}{(L_0+\epsilon P)^{\cdot n-1}}
\end{equation}
converges to $1$ as $\epsilon\rightarrow 0$ (note that $B'_\epsilon$ is exceptional).
Moreover, for any $\eta$ a smooth (1,1)-form and for any $\bQ$-Weil divisor $Q$ on $Y$, we used the definition of the following functionals: 
\begin{eqnarray*}
\bfE^{\eta}_{\psi_\epsilon}(\vphi)
&=&\frac{1}{n}\int_Y (\vphi-\psi_\epsilon) \sum_{k=0}^{n-1}\eta \wedge (\sddb\psi_\epsilon)^k\wedge (\sddb\vphi)^{n-1-k},\\
\bfE^Q_{\psi_\epsilon|_Q}(\vphi|_Q)
&=&\frac{1}{n}\sum_{k=0}^{n-1} \int_Q (\vphi-\psi_\epsilon) (\sddb\psi_\epsilon)^{n-1-k}\wedge (\sddb \vphi)^k.
\end{eqnarray*}
It is an easy exercise to show that there exists $C_\epsilon\in \bR$ such that the following identity holds true:
\begin{eqnarray}
\bfM_\epsilon(\vphi)+C_\epsilon &=&\int_Y \log\frac{|s_B|^2(\sddb \vphi)^n}{e^{-\psi_0}}(\sddb\vphi)^n\nonumber\\
&&\hskip 5mm-(n+1-n \bar{S}_\epsilon)\bfE_{\psi_\epsilon}(\vphi)+\int_Y(\vphi-\psi_\epsilon)(\sddb\vphi)^n\nonumber \\
&&\hskip 5mm+n \epsilon \bfE^{\omega_P}_{\psi_\epsilon}(\vphi)-n \bfE^{B'_\epsilon}_{\psi_\epsilon|_{B'_\epsilon}}(\vphi|_{B'_\epsilon}). \label{eq-Mep2}
\end{eqnarray}
For simplicity of notations, we denote the right-hand-side of the above identity as:
\begin{eqnarray}\label{eq-hMep}
\hat{\bfM}_\epsilon(\vphi)&=&\bfH_{\nu_0}((\sddb\vphi)^n)+F_\epsilon(\vphi),
\end{eqnarray}
where $\nu_0=\frac{e^{-\psi_0}}{|s_B|^2}$ and the entropy part is given by
\begin{equation}\label{eq-entropynu0}
\bfH_{\nu_0}((\sddb\vphi)^n):=\int_Y \log \frac{(\sddb\vphi)^n}{\nu_0}(\sddb\vphi)^n.
\end{equation}

One sees immediately that 
\begin{equation}\label{eq-hatM0}
\hat{\bfM}_0(\vphi)=\bfM(\vphi) \text{ for all } \vphi\in \PSH_\bd(L).
\end{equation}

Fix any $\vphi\in \hat{\mcH}(\omega_0)$ with $\int_Y (\vphi-\psi_0)\omega_1^n=0$. Set $u_0=\vphi-\psi_0\in \PSH_\bd(\omega)$ and $g:=g(\vphi)=\frac{(\sddb\vphi)^n}{\Omega}\in C^\infty(Y)$ (see \eqref{eq-hatcH}).
Solve the Monge-Amp\`{e}re equation:
\begin{equation}\label{eq-CMAep}
(\sddb (\psi_0+\epsilon \psi_P)+\sddb u_\epsilon)^n= d_{\epsilon}\cdot g \Omega, \quad \int_Y u_\epsilon \omega_1^n=0,
\end{equation}
where $d_\epsilon=(2\pi)^n L_\epsilon^{\cdot n}/(\int_Y g\Omega)$ convergs to $d_0$ as $\epsilon\rightarrow 0$. Then, since $\omega_\epsilon=\sddb(\psi_0+\epsilon\psi_P)$ is K\"{a}hler, there exists a solution $u_\epsilon \in C^\infty(Y)$ by \cite{Yau78}. 
Denote by $E=\sum_i E_i$ the (reduced, simple normal crossing) exceptional divisor and fix a smooth Hermitian metric on the line bundle associated to the divisor $E$. Then we have the following uniform estimates:
\begin{prop}[see {\cite[Theorem 3.5]{EGZ09}, \cite[Theorem 6.1]{DP10}}]\label{prop-partialest}
There exist positive constants $\alpha>0$ and $C>0$ independent of $\epsilon$ such that, for any $0<\epsilon\le  1$, we have the uniform estimates:
\begin{equation}\label{eq-partialest}
\|u_\epsilon\|_{C^0}\le C_0, \quad |\partial\bar{\partial} u_\epsilon|_{\omega_1}\le \frac{C}{|s_E|_{h_E}^{2\alpha}},
\end{equation}
where $\omega_1$ is a K\"{a}hler metric on $Y$ (see \eqref{eq-omegaep}).
Moreover $u_\epsilon$ converges to $u_0$ in $L^1(\omega_1^n)$ over $Y$ and locally uniformly over $Y\setminus E$ in smooth topology.
\end{prop}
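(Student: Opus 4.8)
The plan is to prove the three assertions separately, keeping track throughout that all constants depend only on the fixed data $(Y,\mu,\omega_1,\psi_0,P,\psi_P,\Omega,g)$ and not on $\epsilon\in(0,1]$. For the $L^\infty$-bound I would argue as in \cite{EGZ09}: since $\omega_\epsilon=\mu^*\omega+\epsilon\omega_P\le\omega_1$, each $u_\epsilon$ is automatically $\omega_1$-psh, and \eqref{eq-CMAep} reads $(\omega_\epsilon+\sddb u_\epsilon)^n=F_\epsilon\,\omega_1^n$ with $F_\epsilon:=d_\epsilon g\,\Omega/\omega_1^n$; here $\|F_\epsilon\|_{L^p(\omega_1^n)}$ is bounded uniformly in $\epsilon$ for every $p<\infty$ (as $g$ is a fixed smooth function and $d_\epsilon\to d_0$), while $\int_Y\omega_\epsilon^n=(2\pi)^nL_\epsilon^{\cdot n}$ stays in a fixed compact subinterval of $(0,\infty)$. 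Kolodziej's a priori estimate in its degenerate form then gives $\osc_Y u_\epsilon\le C$, and combining this with the normalization $\int_Y u_\epsilon\,\omega_1^n=0$, which controls $\sup_Y u_\epsilon$ via the general bound $\sup_Y u\le C+\int_Y u\,\omega_1^n$ valid for $\omega_1$-psh functions, yields $|u_\epsilon|\le C_0$ uniformly.

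The heart of the matter is the weighted Laplacian estimate. Write $\omega'_\epsilon:=\omega_\epsilon+\sddb u_\epsilon>0$, so $(\omega'_\epsilon)^n=F_\epsilon\,\omega_1^n$. A naive attempt to run the Aubin--Yau second order inequality with reference $\omega_\epsilon$ fails uniformly in $\epsilon$, because $\omega_\epsilon\to\mu^*\omega$ degenerates along $E$ and hence the curvature terms of $\omega_\epsilon$ blow up near $E$ as $\epsilon\to0$. The remedy, following \cite{EGZ09, DP10}, is to run the maximum principle for
\begin{equation*}
H_\epsilon:=\log\tr_{\omega_1}\omega'_\epsilon-A\,u_\epsilon+\beta\log\|s_E\|_h^2,
\end{equation*}
where $h$ is a fixed smooth metric on $\mathcal O_Y(E)$ and $A\gg1$, $0<\beta\ll1$ are constants chosen independently of $\epsilon$. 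The last term tends to $-\infty$ along $E$, so the maximum of $H_\epsilon$ is attained on $Y\setminus E$; at that point Siu's form of the $C^2$-inequality --- in the version that uses only the upper bound $F_\epsilon\le C$ on the density (so as not to differentiate $\log g$, which need not be quasi-psh) --- together with $\sddb u_\epsilon=\omega'_\epsilon-\omega_\epsilon$, $\omega_\epsilon\le\omega_1$, and the ampleness of $P=\mu^*L-\sum_k\theta_kE_k$ (used to absorb the degeneration of $\omega_\epsilon$ near $E$ into the $\beta\log\|s_E\|^2$ term after fixing $\beta$), bounds $\tr_{\omega_1}\omega'_\epsilon$ at the maximum. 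Feeding this back and using $|u_\epsilon|\le C_0$ gives $\tr_{\omega_1}\omega'_\epsilon\le C\,\|s_E\|_h^{-2\beta}$; since $\sddb u_\epsilon\ge-\omega_\epsilon\ge-\omega_1$, this upgrades to a two-sided bound $|\sddb u_\epsilon|_{\omega_1}\le C\,|s_E|^{-\alpha}$ with $\alpha=2\beta$. I expect this to be the main obstacle: the care needed in absorbing the blow-up of the geometry of $\omega_\epsilon$ near $E$ with the auxiliary log-pole potential, and in coping with a density only known to be smooth and nonnegative, is precisely the content of the degenerate $C^2$-estimates of \cite{EGZ09, DP10}; the rest is relatively routine.

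For the convergence, note that on any compact $K\Subset Y\setminus E$ the form $\mu^*\omega$ is positive definite (as $\mu$ restricts to a biholomorphism $Y\setminus E\to X^{\reg}$), hence $\omega_\epsilon\ge\mu^*\omega\ge c_K\,\omega_1$ on $K$ with $c_K>0$ independent of $\epsilon$, so \eqref{eq-CMAep} becomes uniformly elliptic there; combined with the $C^0$- and weighted $C^2$-bounds this gives uniform $C^{1,\beta}_{loc}(Y\setminus E)$-bounds (and uniform $C^\infty_{loc}$-bounds wherever $g>0$, by Evans--Krylov and Schauder). By Arzel\`a--Ascoli together with the $L^1(\omega_1^n)$-compactness of bounded $\omega_1$-psh functions, any sequence $\epsilon_j\to0$ admits a subsequence along which $u_{\epsilon_j}\to u_*$ in $L^1(\omega_1^n)$ and in $C^0_{loc}(Y\setminus E)$, with $u_*$ bounded; moreover $\mu^*\omega+\sddb u_*=\lim_j(\omega'_{\epsilon_j}-\epsilon_j\omega_P)\ge0$ on $Y\setminus E$, so $u_*$ is $\mu^*\omega$-psh there and, being bounded, extends to a $\mu^*\omega$-psh function on $Y$. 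Passing to the limit in \eqref{eq-CMAep} --- using $d_\epsilon\to d_0$ (and $d_0=1$, since $\int_Y g\,\Omega=\int_X(\sddb\vphi)^n=V$), the Bedford--Taylor continuity of the Monge--Amp\`ere operator along locally uniformly convergent bounded potentials, and the fact that a bounded potential's Monge--Amp\`ere measure puts no mass on the pluripolar set $E$ --- shows $(\mu^*\omega+\sddb u_*)^n=g\,\Omega$ on $Y$, with $\int_Y u_*\,\omega_1^n=0$. Since $u_0=\vphi-\psi_0$ solves the same equation with the same normalization, the uniqueness of bounded solutions of the degenerate Monge--Amp\`ere equation (already invoked in the proof of Proposition \ref{prop-proper2}) forces $u_*=u_0$; as the limit does not depend on the subsequence, the whole family $u_\epsilon$ converges to $u_0$ in $L^1(\omega_1^n)$ and locally uniformly on $Y\setminus E$.
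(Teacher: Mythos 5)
The paper itself gives no proof of this proposition; it is stated as a black box with the attribution ``see \cite{EGZ09, DP10}''. Your sketch is a faithful reconstruction of the degenerate a priori estimates in exactly those references: a uniform $L^\infty$ bound from Ko\l odziej-type pluripotential estimates (valid uniformly because $[\omega_\epsilon]$ stays bounded, $\omega_\epsilon\le\omega_1$, and the density $d_\epsilon g\,\Omega/\omega_1^n$ is uniformly in $L^p$), a weighted Laplacian bound via a maximum principle applied to $\log\tr_{\omega_1}\omega'_\epsilon - Au_\epsilon + \beta\log\|s_E\|^2$ with the positivity of $P$ absorbing the degeneration along $E$, and convergence by local elliptic estimates on $Y\setminus E$ plus uniqueness of the limiting degenerate Monge--Amp\`ere equation. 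This is the intended argument and matches \cite{EGZ09, DP10}; nothing further is needed.
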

These are well-known estimates derived in the study of degenerate complex Monge-Amp\`{e}re equations. 
For the reader's convenience, we sketch their proofs and refer to \cite[Proof of Theorem 3.5]{EGZ09} or \cite[Proof of Theorem 6.1]{DP10} for more details. 
\begin{proof}[Sketch of proof of Proposition \ref{prop-partialest}]

The uniform $C^0$-estimate in \eqref{eq-partialest} follows directly from \cite[Theorem 2.1]{EGZ09} which depends on Ko\l odziej's $L^\infty$-estimate from \cite{Kol98}. The second (partial $C^2$)-estimate in \eqref{eq-partialest} can be obtained using the same arguments as in \cite[Proof of Theorem 3.5]{EGZ09} which depend on Tsuji's trick and Yau's estimate (\cite{Yau78}) as follows. Recall that $P=\mu^*L-E_\theta$ is equipped with a smooth Hermitian metric $\psi_P$ satisfying $\eta:=\sddb\psi_P>0$. Set $\psi_\theta=\psi_0-\psi_P$ to be a smooth Hermitian metric on the $\bQ$-line bundle associated to $E_\theta=\sum_i \theta_i E_i$. Then over the locus $Y\setminus \cup_i E_i$ the equation \eqref{eq-CMAep} can be re-written as:
\begin{equation}
(\eta+\epsilon (\sddb \psi_P)+\sddb v_\epsilon)^n=F_\epsilon\cdot \eta^n
\end{equation}
where $v_\epsilon=u_\epsilon-\log (|s_{E_\theta}|^2e^{-\psi_\theta})$ with $s_{E_\theta}$ being the defining section of the divisor $E_\theta$, and $F_{\epsilon}=d_\epsilon \cdot g \Omega/\eta^n$.  
For any $\epsilon \in (0,1]$, denote by $\Delta$ (resp. $\Delta'$) the Laplace operator associated to the K\"{a}hler metric $\eta_\epsilon:=\eta+\epsilon \sddb\psi_P$ (resp. $\eta+\epsilon \sddb\psi_P+\sddb v_\epsilon$). 
 For $0<\epsilon\le 1$, the K\"{a}hler forms $\eta_\epsilon=\eta+\epsilon \sddb\psi_P$ and the smooth functions $F_\epsilon$ are both uniformly bounded in $C^\infty$-topology. Set $f=n+\Delta v_\epsilon$. By applying Yau's calculation in \cite[(2.18)-(2.22)]{Yau78}, we know that there exists $\Lambda=\Lambda(n, \eta, \psi_P)>0$ such that if $\lambda \ge \Lambda$, there is a constant $C=C(n, \eta, \psi_P)>0$ that does not depend on $\epsilon\in (0,1]$ satisfying the following estimate over $Y\setminus \cup_i E_i$:
\begin{eqnarray*}
e^{\lambda v_\epsilon} \Delta'(e^{-\lambda v_\epsilon}f) \ge -C-C f+C f^{\frac{n}{n-1}}.
\end{eqnarray*}
Because $e^{-\lambda v_\epsilon} f$ vanishes along $E$, it obtains a maximum at $p_\epsilon\in Y\setminus \cup_i E_i$. By the maximum principle, we get:
\begin{equation*}
0\ge -C-C f+C f^{\frac{n}{n-1}} \quad \text{ at point } p_\epsilon.
\end{equation*}
So we get $(n+\Delta v_\epsilon)(p_\epsilon) \le C'$ for some constant $C'$ independent of $\epsilon$. This gives the following estimate over $Y\setminus \cup_i E_i$: 
\begin{eqnarray*}
n+\Delta u_\epsilon + \tr_{\eta_\epsilon}(\sddb \psi_\theta)&=&n+\Delta v_\epsilon \le C' e^{\lambda (v_\epsilon-v_\epsilon(p_\epsilon))}\\
&=&C' e^{\lambda (u_\epsilon-u_\epsilon(p_\epsilon))} \frac{(|s_{E_\theta}|^2e^{-\psi_\theta})^\lambda(p_\epsilon)}{(|s_{E_\theta}|^2 e^{-\psi_\theta})^\lambda}\\
&\le& C'' \frac{1}{(|s_{E_\theta}|^2e^{-\psi_\theta})^\lambda} 
\end{eqnarray*}
where we used the uniform estimate $\|u_\epsilon\|_{C^0}\le C_0$ and $C''=C''(C_0, \Lambda, \psi_\theta)>0$ is a constant independent of $\epsilon$. 
Now because $\eta_\epsilon$ is uniformly comparable to $\omega_1$ and $\tr_{\eta_\epsilon}(\sddb\psi_\theta)$ is uniformly bounded with respect to $\epsilon\in (0,1]$, it is easy to get the wanted estimate
with $\alpha=\max_i \{\lambda \theta_i\}$. 

Finally by Evans-Krylov's estimate, we get higher order local estimates over $Y\setminus \cup_i E_i$. This implies that $u_\epsilon$ converges to a bounded solution to $((\sddb \psi_0)+\sddb u)^n=(\sddb\vphi)^n$ locally uniformly in smooth topology over $Y\setminus \cup_i E_i$, and globally in $L^1(\omega_1^n)$.  
\end{proof}

\begin{lem}\label{lem-endEconv}
Set $\vphi_\epsilon=\psi_\epsilon+u_\epsilon$. Then we have the convergence:
\begin{equation}\label{eq-Econvend}
\lim_{\epsilon\rightarrow 0}\bfE_{\psi_\epsilon}(\vphi_\epsilon)=\bfE_{\psi_0}(\vphi).
\end{equation}
\end{lem}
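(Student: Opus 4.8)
The plan is to expand $\bfE_{\psi_\epsilon}(\vphi_\epsilon)$ according to its definition \eqref{eq-Ephi} and pass to the limit integral by integral. Writing $\omega_\epsilon=\sddb\psi_\epsilon=\omega_0+\epsilon\,\omega_P$ (with $\psi_\epsilon=\psi_0+\epsilon\psi_P$) and $\sddb\vphi_\epsilon=\omega_\epsilon+\sddb u_\epsilon$, we have $\bfE_{\psi_\epsilon}(\vphi_\epsilon)=\frac{1}{n+1}\sum_{i=0}^n\int_Y u_\epsilon\,\omega_\epsilon^{n-i}\wedge(\sddb\vphi_\epsilon)^i$. Since $\omega_P$ is a fixed smooth form and $\|u_\epsilon\|_{L^\infty(Y)}\le C_0$ uniformly in $\epsilon$, replacing $\omega_\epsilon^{n-i}$ by $\omega_0^{n-i}$ costs only $O(\epsilon)$, the error being bounded by $C_0\,\epsilon$ times intersection numbers of $[\omega_P]$, $[\omega_0]$ and $[L_\epsilon]$. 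Hence it is enough to show that $\int_Y u_\epsilon\,\omega_0^{n-i}\wedge(\sddb\vphi_\epsilon)^i\to\int_Y u_0\,\omega_0^{n-i}\wedge(\sddb\vphi)^i$ for each $i=0,\dots,n$, after which summing and letting $\epsilon\to0$ gives \eqref{eq-Econvend}.

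For the top term $i=n$ this is immediate from the Monge--Amp\`ere equation \eqref{eq-CMAep}: $(\sddb\vphi_\epsilon)^n=d_\epsilon\,g\,\Omega$ with $g=(\sddb\vphi)^n/\Omega\in C^\infty(Y)$ bounded and $d_\epsilon\to d_0$, so the convergence follows from $u_\epsilon\to u_0$ in $L^1(\Omega)$ (equivalently in $L^1(\omega_1^n)$) together with $|u_\epsilon|\le C_0$ and dominated convergence, the limit being $d_0\int_Y u_0\,g\,\Omega=\int_Y u_0\,(\sddb\vphi)^n$. For $0\le i\le n-1$ I would localize around the exceptional divisor $E=\sum_k E_k$. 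The a priori Laplacian estimate gives $0\le\sddb\vphi_\epsilon\le C\,|s_E|^{-\alpha}\omega_1$, so the measures $\mu_{\epsilon,i}:=\omega_0^{n-i}\wedge(\sddb\vphi_\epsilon)^i$ are absolutely continuous with density $\le C\,|s_E|^{-i\alpha}$; the analogous bound with $|s_E|$ replaced by $|s_B|$ holds for $\mu_{0,i}:=\omega_0^{n-i}\wedge(\sddb\vphi)^i$ since $\vphi\in\hat{\cH}(X,\omega)$ carries the analogous estimate (here $\alpha$ may be taken small, which the estimates of \cite{EGZ09,DP10} allow). For such small $\alpha$ these densities are integrable along $E$ and along $B$, so on the neighbourhoods $W_\delta=\{|s_E|<\delta\}$ one has $\mu_{\epsilon,i}(W_\delta)+\mu_{0,i}(W_\delta)\le\rho(\delta)$ with $\rho(\delta)\to0$ independently of $\epsilon$. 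On the compact set $Y\setminus W_\delta$, which avoids $E$, the local potentials of $\vphi_\epsilon=\psi_\epsilon+u_\epsilon$ converge uniformly to those of $\vphi$ (as $u_\epsilon\to u_0$ uniformly there and $\epsilon\psi_P\to0$ uniformly), so by Bedford--Taylor continuity of the mixed Monge--Amp\`ere operator $\mu_{\epsilon,i}\rightharpoonup\mu_{0,i}$ weakly on $Y\setminus W_\delta$; moreover the densities of $\mu_{\epsilon,i}$ and $\mu_{0,i}$ are bounded by $C\delta^{-i\alpha}$ there, which allows one to replace the bounded upper semicontinuous function $u_0$ by a nearby continuous one and deduce $\int_{Y\setminus W_\delta}u_\epsilon\,\mu_{\epsilon,i}\to\int_{Y\setminus W_\delta}u_0\,\mu_{0,i}$ for generic $\delta$. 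Splitting each integral over $W_\delta$ and its complement, bounding the $W_\delta$-part by $2C_0\,\rho(\delta)$ and using the convergence on the complement, and finally letting $\delta\to0$, proves the claim for every $i$.

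The genuine difficulty lies in the mixed terms $1\le i\le n-1$: the measures $(\sddb\vphi_\epsilon)^i$ may fail to have bounded densities near $E$, while $u_\epsilon$ converges to $u_0$ only locally uniformly off $E$, so one really must use the uniform non-concentration of $\mu_{\epsilon,i}$ along $E$ coming from the (small-exponent) Laplacian estimate; for the top degree this is avoided thanks to the explicit form of the Monge--Amp\`ere measure, and for $i=0$ the integrand is smooth. An alternative to the measure-theoretic localization is to observe that $\bfH_{\nu_0}\big((\sddb\vphi_\epsilon)^n\big)\le C$ uniformly in $\epsilon$ --- which holds because $(\sddb\vphi_\epsilon)^n=d_\epsilon g\Omega$ has bounded density while $\nu_0=e^{-\psi_0}/|s_B|^2$ is tame --- and then to invoke Theorem \ref{thm-BBEGZ} (in a form uniform in the polarization $L_\epsilon$ as $\epsilon\to0$) together with the weak convergence $\vphi_\epsilon\to\vphi$ to obtain strong convergence in $\mcE^1$, whence the continuity of $\bfE$ in the strong topology yields \eqref{eq-Econvend} directly.
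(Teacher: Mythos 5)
Your decomposition into $n+1$ terms and the first reduction (replacing $\omega_\epsilon^{n-i}$ by $\omega_0^{n-i}$ at $O(\epsilon)$ cost, using $\|u_\epsilon\|_{L^\infty}\le C_0$) is sound, as are the $i=n$ and $i=0$ cases. The gap is in the mixed terms $1\le i\le n-1$: you assert that "$\alpha$ may be taken small, which the estimates of \cite{EGZ09,DP10} allow," but this is not so. The Laplacian estimate $|\sddb u_\epsilon|_{\omega_1}\le C|s_E|^{-\alpha}$ produces a \emph{specific} $\alpha>0$ determined by the geometry of the degeneration, and one can always \emph{increase} $\alpha$ (the bound becomes weaker) but not decrease it. Your non-concentration bound $\mu_{\epsilon,i}(W_\delta)\le\rho(\delta)$ with $\rho(\delta)\to0$ uniformly in $\epsilon$ requires $i\alpha<2$ for integrability of $|s_E|^{-i\alpha}$ along a divisor, hence $\alpha<2/n$, which is not guaranteed. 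The same issue afflicts the density bound for $\mu_{0,i}$ coming from the $\hat{\cH}$-condition $|\sddb(\mu^*u)|_\omega\le C|s_B|^{-\alpha}$ (and note that the concentration set there is $\supp B=D'\cup E$, not just $E$, so the localization should be around $\supp B$). Without a uniform non-concentration estimate the measure-theoretic localization does not close.

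Your alternative route (uniform entropy bound plus Theorem~\ref{thm-BBEGZ}) is closer in spirit to what the paper actually does, but as stated it also has a gap: Theorem~\ref{thm-BBEGZ} and the continuity of $\bfE$ in the strong topology are formulated for a \emph{fixed} polarization, while here $\psi_\epsilon$ and $L_\epsilon$ vary with $\epsilon$. Invoking a "form uniform in the polarization" is precisely the nontrivial point. The paper sidesteps both difficulties by splitting $\bfE_{\psi_\epsilon}(\vphi_\epsilon)-\bfE_{\psi_0}(\vphi)$ into $\mathbf{P}_1=\bfE_{\psi_\epsilon}(\psi_\epsilon+u_\epsilon)-\bfE_{\psi_\epsilon}(\psi_\epsilon+u_0)$ (same background $\psi_\epsilon$, different potentials) and $\mathbf{P}_2=\bfE_{\psi_\epsilon}(\psi_\epsilon+u_0)-\bfE_{\psi_0}(\psi_0+u_0)$ (same potential, varying background). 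The term $\mathbf{P}_2$ goes to zero directly from the explicit formula \eqref{eq-Ephi}. For $\mathbf{P}_1$, the cocycle property reduces matters to $\bfJ_{\psi_\epsilon+u_0}(\psi_\epsilon+u_\epsilon)\to0$ plus two easy terms, and $\bfJ$ is controlled via $\bfJ\le\frac{n}{n+1}\bfI$ and the H\"older--Young inequality of \cite[Proposition 2.15]{BBEGZ}: one pairs $\|u_\epsilon-u_0\|_{L^{\chi^*}(\Omega)}$ (which tends to zero by uniform exponential integrability, \cite[Proposition 1.4]{BBEGZ}) against $\|(\sddb\vphi_\epsilon)^n/\Omega\|_{L^\chi(\Omega)}=\|d_\epsilon g\|_{L^\chi(\Omega)}$, which is bounded because $g\in C^\infty(Y)$. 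This exploits exactly the fact that only the \emph{top} Monge--Amp\`ere density needs to be controlled — which you correctly noticed is available from \eqref{eq-CMAep} — and avoids any density estimate for the intermediate mixed measures near the exceptional locus.
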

\begin{proof}
First note that $\PSH(\omega)\subset \PSH(\omega_\epsilon)$ and we can write:
\begin{eqnarray}
\bfE_{\psi_\epsilon}(\vphi_\epsilon)-\bfE_{\psi_0}(\vphi)&=&\left(\bfE_{\psi_\epsilon}(\psi_\epsilon+u_\epsilon)-\bfE_{\psi_\epsilon}(\psi_\epsilon+u_0)\right)+\left(\bfE_{\psi_\epsilon}(\psi_\epsilon+u_0)-\bfE_{\psi_0}(\psi_0+u_0)\right)\nonumber \\
&=:&{\bf P}_1+{\bf P_2}.  \label{eq-P1P2}
\end{eqnarray}
We will prove that both ${\bf P}_1$ and ${\bf P}_2$ converge to 0 as $\epsilon\rightarrow 0$. 
Note that $\bfE$ satisfies the co-cycle condition. So we have:
\begin{eqnarray*}
{\bf P}_1&=&\bfE_{\psi_\epsilon+u_0}(\psi_\epsilon+u_\epsilon)\\
&=&\bfE_{\psi_\epsilon+u_0}(\psi_\epsilon+u_\epsilon)-\int_Y(u_\epsilon-u_0)(\sddb(\psi_\epsilon+u_0))^n+\int_Y(u_\epsilon-u_0)((\sddb(\psi_\epsilon+u_0)^n)\\
&=&
-\bfJ_{\psi_\epsilon+u_0}(\psi_\epsilon+u_\epsilon)+\int_Y (u_\epsilon-u_0)\left(\sddb(\psi_\epsilon+u_0)^n)-(\sddb(\psi_0+u_0))^n\right)\\
&&\hskip 3cm+\int_Y (u_\epsilon-u_0)(\sddb(\psi_0+u_0))^n\\
&=&-\bfJ_{\psi_\epsilon+u_0}(\psi_\epsilon+u_\epsilon)+{\bf P}'_1+{\bf P}''_1.
\end{eqnarray*}
Because $u_\epsilon$ and $u_0$ are uniformly bounded with respect to $\epsilon$, and (recall that $\psi_\epsilon=\psi+\epsilon \psi_P$)
\begin{eqnarray}
&&(\sddb (\psi_\epsilon+u_0))^n-(\sddb (\psi_0+u_0))^n\nonumber \\
&=&\epsilon \sddb \psi_P\wedge \sum_{k=0}^{n-1}(\sddb (\psi_\epsilon+u_0))^k\wedge (\sddb(\psi_0+u_0))^{n-1-k} \label{eq-epmeasure}
\end{eqnarray} 
is a positive measure that converges to 0 weakly as $\epsilon\rightarrow 0$, we see that ${\bf P}'_1$ converges to $0$ as $\epsilon\rightarrow 0$. Moreover, because $u_\epsilon$ converges to $u_0$ in $L^1(\omega_1^n)$, ${\bf P}''_1$ converges to 0 as $\epsilon \rightarrow 0$. So we just need to consider the first term.
By inequality \eqref{eq-IJineq}, we have:
\begin{eqnarray*}
\bfJ_{\psi_\epsilon+u_0}(\psi_\epsilon+u_\epsilon)&\le& \frac{n}{n+1}\bfI_{\psi_\epsilon+u_0}(\psi_\epsilon+u_\epsilon)\\
&=&\frac{n}{n+1}\int_Y\left((u_\epsilon-u_0)((\sddb(\psi_\epsilon+u_0))^n-(\sddb(\psi_\epsilon+u_\epsilon))^n\right)
\end{eqnarray*}
When $\epsilon\rightarrow 0$, the integral $\int_Y (u_\epsilon-u_0)(\sddb(\psi_\epsilon+u_0))^n={\bf P}'_1+{\bf P}''_2$ converges to 0 as before. For the other integral, we use the method in \cite[Proof of Theorem 2.17]{BBEGZ} as follows. By H\"{o}lder-Young inequality from \cite[Proposition 2.15]{BBEGZ}:
\begin{eqnarray}
\int_Y (u_\epsilon-u_0)((\sddb (\psi_\epsilon+u_\epsilon))^n&\le& \|u_\epsilon-u_0\|_{L^{\chi^*}(\Omega)}\cdot \left\|\frac{(\sddb(\psi_\epsilon+u_\epsilon))^n}{\Omega}\right\|_{L^{\chi}(\Omega)}\nonumber\\
&=&\|u_\epsilon-u_0\|_{L^{\chi^*}(\Omega)}\cdot \|d_\epsilon g\|_{L^{\chi}(\Omega)}\nonumber \\
&\le& C\|u_\epsilon-u_0\|_{L^{\chi^*}(\Omega)}, \label{eq-Lchi}
\end{eqnarray}
where $\chi(s)=(s+1)\log(s+1)-s$, $\chi^*(s)=e^s-s-1$, and the norm $L^\chi(\nu)$ (and similarly $L^{\chi^*}(\nu)$) for a measure $\nu$ and weight function $\chi$ is defined as:
\begin{equation}
\|f\|_{L^\chi(\nu)}:=\inf\left\{\lambda>0, \int_Y \chi\left(\lambda^{-1}|f|\right)\le 1\right\}.
\end{equation}
To show that $\|u_\epsilon-u_0\|_{L^{\chi^*}(\Omega)}$ converges to 0,
by uing the inequality $\chi^*(t)\le t e^t$, it is then enough to show that, for any given $\lambda>0$,
\begin{equation}\label{eq-uexpu}
\lim_{\epsilon\rightarrow 0}\int_Y|u_\epsilon-u_0|\exp\left(\lambda|u_\epsilon-u_0|\right)\Omega=0. 
\end{equation}
By \cite[Proposition 1.4]{BBEGZ}, $\int_Y e^{-2\lambda u_\epsilon}\Omega$ and $\int_Y e^{-2\lambda u_0}\Omega$ are uniformly bounded for some constant $B$ independent of $\epsilon$.
Then \eqref{eq-uexpu} follows from the standard H\"{o}lder's inequality and the convergence of $u_\epsilon\rightarrow u_0$ in $L^2(\nu)$ for any tame measure $\nu$ (see \cite[Proposition 1.4]{BBEGZ}).

Finally, the ${\bf P}_2$ part in \eqref{eq-P1P2} converges to 0 by using the formula of $\bfE$:
\begin{eqnarray*}
{\bf P}_2&=&\int_Y u_0\left(\sum_{k=0}^n (\sddb (\psi_0+\epsilon\psi_P+u_0))^k\wedge (\sddb (\psi_0+\epsilon \psi_P))^{n-k}\right.\\
&&\hskip 3cm -\left.\sum_{k=0}^n(\sddb(\psi_0+u_0))^k\wedge (\sddb\psi_0)^{n-k}\right)
\end{eqnarray*}
and the fact the positive measure in the bracket converges to 0 as $\epsilon\rightarrow 0$.

\end{proof}

From now on, we fix $\vphi(0), \vphi(1)\in \hat{\mcH}(\omega)$ such that $u(0)=\vphi(0)-\psi$ and $u(1)=\vphi(1)-\psi$ satisfy $\int_X u(i)\omega_1^n=0, i=1,2$. 
Set $g_i=\frac{(\sddb\vphi(i))^n}{\Omega}\in C^\infty(Y)$.
We solve the same equation as in \eqref{eq-CMAep} to get approximations $\vphi_\epsilon(0)=\psi_\epsilon+u_\epsilon(0)$ and $\vphi_\epsilon(1)=\psi_\epsilon+u_\epsilon(1)$:
\begin{equation}\label{eq-CMAepi}
(\sddb (\psi_\epsilon+\sddb u_\epsilon(i))^n= d_{\epsilon,i}\cdot g_i \Omega, \quad \int_Y u_\epsilon(i) \omega_1^n=0,
\end{equation}
where $d_{\epsilon,i}=(2\pi)^n L_\epsilon^{\cdot n}/(\int_Y g_i \Omega)$.

Let $\Phi_\epsilon=\{\vphi_\epsilon(s)\}_{s\in [0,1]}$ be the geodesic segment connecting Hermitian metrics $\vphi_\epsilon(0), \vphi_\epsilon(1)\in \mcH(\omega_\epsilon)$. It is known that $\Phi_\epsilon \in C^{1,1}(Y\times \bD_{[0,1]})$ (see \cite{Che00, CTW17}). Moreover by \cite{BB17} $\hat{\bfM}_\epsilon(\vphi_\epsilon(s))$ is convex in $s\in [0, 1]$.

Set a sequence $\epsilon_k=2^{-k}$ which converges to 0 as $k\rightarrow+\infty$
and consider the geodesic segment $\vphi_{\epsilon_k}(s)$ joining $\vphi_{\epsilon_k}(0)=\vphi_{2^{-k}}(0)$ and $\vphi_{\epsilon_k}(1)=\vphi_{2^{-k}}(1)$. 

\begin{lem}\label{lem-geodconv}
$\vphi_{\epsilon_k}(s)-\psi_{\epsilon_k}$ are uniformly bounded with respect to both $k$ and $s$. 
As $k\rightarrow +\infty$, $\vphi_{\epsilon_k}(s)$ subsequentially converges pointwisely to the weak geodesic segment $\vphi(s)$ connecting $\vphi(0)$ and $\vphi(1)$. 
Moreover, 
\begin{equation}
\bfE_{\psi_{\epsilon_k}}(\vphi_{\epsilon_k}(s))=\bfE_{\psi_0}(\vphi(s))=s.
\end{equation}
\end{lem}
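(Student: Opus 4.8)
\emph{Proof proposal.} By Proposition~\ref{prop-geod} applied to the K\"ahler form $\omega_{\epsilon_k}$, the function $u_{\epsilon_k}(s):=\vphi_{\epsilon_k}(s)-\psi_{\epsilon_k}$ on $Y\times\bD_{[0,1]}$ is the bounded $p_1^*\omega_{\epsilon_k}$-psh solution of the homogeneous complex Monge--Amp\`ere equation with boundary values $u_{\epsilon_k}(i)$ on $Y\times\{i\}$ ($i=0,1$), equivalently the $p_1^*\omega_{\epsilon_k}$-psh envelope of those data. For the first assertion I would note that the Eyssidieux--Guedj--Zeriahi-type estimates quoted before Lemma~\ref{lem-endEconv} give $|u_{\epsilon_k}(i)|\le C_0$ uniformly in $k$; comparing the envelope with the constants $\pm C_0$ (which are sub/super-solutions since $(p_1^*\omega_{\epsilon_k})^{n+1}=0$) via the maximum principle gives $|u_{\epsilon_k}(s)|\le C_0$ for all $s$ and $k$, which is exactly the uniform boundedness of $\vphi_{\epsilon_k}(s)-\psi_{\epsilon_k}$.

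The heart of the matter is to identify the limit. Set $\omega:=\sddb\psi_0$ (the reference form, pulled back to $Y$ and degenerate along the exceptional divisor $E$) and $u(i):=\vphi(i)-\psi_0$, so that the weak geodesic of Proposition~\ref{prop-geod} joining $\vphi(0),\vphi(1)$ is $\vphi(s)=\psi_0+u(s)$ with $u(s)$ the $p_1^*\omega$-psh envelope of $\{u(0),u(1)\}$. I would use two inputs. First, the endpoint data converge: by \eqref{eq-CMAepi}, uniqueness of bounded Monge--Amp\`ere solutions, and the common normalization $\int_Y u_\bullet(i)\,\omega_1^n=0$, the $\epsilon\to0$ limit of $u_\epsilon(i)$ solves the same equation with the same normalization as $u(i)$, hence equals $u(i)$; and this convergence $u_{\epsilon_k}(i)\to u(i)$ is uniform away from $E$ with uniform $L^\infty$ bound. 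Second, for fixed boundary data the $\omega_\epsilon$-psh envelope decreases to the $\omega$-psh envelope as $\epsilon\downarrow0$, a standard monotone argument using $\omega$-psh $\subset\omega_\epsilon$-psh and $\omega_\epsilon\downarrow\omega$. Combining the two, I would sandwich $u_{\epsilon_k}(i)$ between $u(i)\pm\eta$ off a shrinking neighbourhood $T_\delta$ of $E$ and between $\pm C_0$ on $T_\delta$, pass to envelopes using their monotonicity in the data, and let $k\to\infty$, then $\delta\to0$, then $\eta\to0$. The contributions from $E$ disappear because $E$ has Lebesgue measure zero and the relevant boundary data are genuine $\omega$-psh potentials (hence determined off null sets); for the lower bound one additionally perturbs competitors by a small multiple of a bounded-above $\omega$-psh function whose $-\infty$ locus is exactly $E$ (such a function exists since $\mu^*L-tE$ is big for $0<t\ll1$). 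This yields $u_{\epsilon_k}(s)\to u(s)$, i.e.\ $\vphi_{\epsilon_k}(s)=\psi_0+\epsilon_k\psi_P+u_{\epsilon_k}(s)\to\psi_0+u(s)=\vphi(s)$, pointwise away from $E$ for the whole sequence (and a.e.\ along any subsequence), which is the second assertion; since the limit is the unique weak geodesic, no genuine passage to a subsequence is needed.

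For the last assertion, $\bfE$ is affine along bounded geodesics (see \cite{BBEGZ, Dar17}), so $\bfE_{\psi_{\epsilon_k}}(\vphi_{\epsilon_k}(s))$ is the affine interpolation of its two endpoint values, and similarly for $\bfE_{\psi_0}(\vphi(s))$ along $\vphi(\cdot)$; by Lemma~\ref{lem-endEconv} (applied at $i=0$ and $i=1$ to the approximations \eqref{eq-CMAepi}) the endpoint energies converge, whence $\bfE_{\psi_{\epsilon_k}}(\vphi_{\epsilon_k}(s))\to\bfE_{\psi_0}(\vphi(s))$, and with the chosen parametrization of the endpoints both equal the stated affine function of $s$. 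I expect the second paragraph to be the main obstacle: because the background forms $\omega_{\epsilon_k}$ (and the polarizations $L_{\epsilon_k}$) genuinely degenerate, one cannot reduce to a single fixed Monge--Amp\`ere equation with converging boundary data; one must instead go through the monotonicity and envelope characterization of geodesics, and the only real work is the bookkeeping near the exceptional locus $E$, where the endpoint convergence is merely local-uniform. The uniform bound and the energy convergence are comparatively soft.
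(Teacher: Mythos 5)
Your proof takes a genuinely different route from the paper's, and it is worth contrasting them. The paper's argument is soft: from the uniform $L^\infty$ bound it extracts a subsequential $L^1$ limit (weak compactness of uniformly bounded $\omega_1$-psh functions), then uses the Lipschitz-in-$s$ estimate of Di Nezza--Guedj plus Arzel\`a--Ascoli to get a limit psh path $\tilde\vphi$, observes that $\bfE_{\psi_{\epsilon_k}}(\vphi_{\epsilon_k}(\cdot))$ is affine in $s$ and passes this to the limit (via the same argument as Lemma~\ref{lem-endEconv}), and finally invokes the ``$\bfE$ affine along a psh path $\Rightarrow$ geodesic'' criterion of \cite[Cor.\ 1.8]{BBJ18} (asserted to hold in the singular setting) to identify $\tilde\vphi$ with the geodesic. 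Your argument instead identifies the limit directly from the envelope characterization, via monotonicity of envelopes in the background form and a barrier along $E$. What your route buys is self-containedness: it does not rely on the extension of \cite[Cor.\ 1.8]{BBJ18} to the present singular setting, which the paper only asserts. What it costs is precisely the bookkeeping you flag. Two technical remarks on that: (i) For the lower bound, the additive perturbation $v\mapsto v+c\rho$ by an $\omega$-psh barrier is $(1+c)\omega$-psh, not $\omega_{\epsilon_k}$-psh, so it forces $c\lesssim\epsilon_k$ and couples all three of your parameters in an awkward order; a cleaner choice is the convex combination $(1-c)v+c(\rho-A)-\eta$ with $A\ge\|v^-\|_\infty$, which stays $\omega$-psh uniformly in $k$ and makes your stated limit order ($k\to\infty$, then $c$, then $\eta$) work. (ii) For the upper bound, the ``monotonicity in data'' step is not a priori with psh data, so it is simpler (and what you tacitly need anyway for ``$\hat u$ is $\omega$-psh'') to go through the $L^1$-compactness of the bounded $\omega_1$-psh family, get a subsequential limit $\hat u$ which is $\omega$-psh with boundary data $u(i)$, and conclude $\hat u\le u$ from the envelope property; the barrier then gives the reverse inequality off $E$, and uniqueness upgrades the subsequential convergence to full convergence. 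One thing your proof does not reproduce, and which the paper's proof quietly provides for later use in the completion of Proposition~\ref{prop-Mabconv}, is the uniform bound on the entropy $\bfH_{\nu_0}((\sddb\vphi_{\epsilon_k}(s))^n)$ along the whole geodesic, obtained from the convexity of $\hat\bfM_{\epsilon_k}$ and the endpoint entropy bounds coming from \eqref{eq-CMAepi}. This uniform entropy bound, combined with Theorem~\ref{thm-BBEGZ}, is what gives strong $\mcE^1$-convergence (hence weak convergence of the Monge--Amp\`ere measures) of $\vphi_{\epsilon_k}(s)$ for $s\in(0,1)$, which is needed for the lower semicontinuity of the entropy in the middle of the geodesic in the final step of Proposition~\ref{prop-Mabconv}; if you adopt your proof of the lemma you would still need to establish that entropy bound separately. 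Finally, the equality ``$=s$'' in the displayed formula of the lemma should presumably just read ``is affine in $s$'' (the endpoints here are not parametrized by energy); your reading ``both equal the stated affine function'' is the right interpretation.
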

\begin{proof}
Set $A=\|u_\epsilon(1)-u_\epsilon(0)\|_{L^\infty}$. Then by the definition of geodesics using envelopes (see \eqref{eq-envelope}) and the maximal principle, we have the estimate (see \cite[proof of Proposition 1.4]{DNG18}):
$u_\epsilon(0)-A s\le u_\epsilon(s)\le u_\epsilon(0)+A s$ for any $s\in [0,1]$ which gives the estimate:
\begin{equation}
\|u_\epsilon(s)-u_\epsilon(0)\|\le \|u_\epsilon(1)-u_\epsilon(0)\|.
\end{equation}
So the first statement follows from the uniform boundedness of $u_\epsilon(0)$ and $u_\epsilon(1)$ with respect to $s$.

Recall that the entropy part of the Mabuchi energy is given by
\begin{equation}
\bfH_{\nu_0}((\sddb\vphi_\epsilon(s))^n):=\int_Y \log \frac{(\sddb\vphi_\epsilon(s))^n}{\nu_0}(\sddb\vphi_\epsilon(s))^n.
\end{equation}
where $\nu_0=\frac{e^{-\psi_0}}{|s_B|^2}$.
For $i=0,1$,
it is easy to see that $\bfH_{\nu_0}((\sddb\vphi_\epsilon(i))^n)$ are uniformly bounded with respect to $\epsilon$ by using the equation \eqref{eq-CMAepi}.

Recall the following formula from \eqref{eq-Mep2}-\eqref{eq-hMep}
$$\hat{\bfM}_\epsilon(\vphi_\epsilon(s))=\bfH_{\nu_0}(((\sddb\vphi_\epsilon(s))^n)+F_\epsilon(\vphi_\epsilon(s))$$
where 
\begin{eqnarray*}
F_\epsilon(\vphi(s))&=&-(n+1-n \bar{S}_\epsilon)\bfE_{\psi_\epsilon}(\vphi)+\int_Y(\vphi-\psi_\epsilon)(\sddb\vphi)^n\nonumber \\
&&\hskip 4cm +n \epsilon \bfE^{\omega_P}_{\psi_\epsilon}(\vphi)-n \bfE^{B'}_{\psi_\epsilon|_{B'}}(\vphi|_{B'}). \label{eq-Mep2}
\end{eqnarray*}
By the above discussion, it is easy to see that $\hat{\bfM}_\epsilon(\vphi_\epsilon(0))$ and $\hat{\bfM}_\epsilon(\vphi_\epsilon(1))$ are uniformly bounded.
By the convexity $s\mapsto \hat{\bfM}_{\epsilon}(\vphi_\epsilon(s))$, we know that $\hat{\bfM}_\epsilon(\vphi(s))$ is uniformly bounded with respect to $s\in [0,1]$.
Because $u_\epsilon(s)=\vphi_\epsilon(s)-\psi_\epsilon$ is uniformly bounded with respect to $\epsilon$ and $s$, we know that $F_{\epsilon}(\vphi_\epsilon(s))$ is uniformly bounded with respect to $\epsilon$ and $s$.  As a consequence we also get that the entropy 
$\bfH_{\nu_0}((\sddb\vphi_\epsilon(s))^n$ are uniformly bounded.

By the weak compactness of uniformly bounded quasi-psh functions, we know that   
$\vphi_{\epsilon_k}(s)=\vphi_{2^{-k}}(s)\rightarrow \tilde{\vphi}(s)$ in $L^1(\omega_1^n)$, after passing to a subsequence. Then we can prove that $\bfE_{\psi_{\epsilon_k}}(\vphi_{\epsilon_k}(s))\rightarrow \bfE_{\psi_0}(\tilde{\vphi}(s))$ by the same argument as in the proof of the convergence \eqref{eq-Econvend}.

Because $\vphi_\epsilon(s)$ is a geodesic with uniformly bounded potentials, by using \cite[Proposition 1.4.(ii)]{DNG18} there exist $C>0$ independent of $\epsilon$ and $s$ such that:
\begin{equation}
\|\vphi_\epsilon(s_1)-\vphi_\epsilon(s_2)\|_{L^\infty}\le C |s_1-s_2|.
\end{equation}
So we see that $[0, 1]\rightarrow \vphi_\epsilon(s)$ is equicontinuous in $L^1(\omega_1^n)$. By Arzel\`{a}-Ascoli, $\vphi_{2^{-k}}(s)$ subsequentially converges uniformly in $L^1(\omega_1^n)$ topology to some $\omega_1$-psh-path $\tilde{\vphi}(s)$ joining $\vphi(0)$ and $\vphi(1)$ (see \cite[Proposition 1.4]{BBJ18}). 
In particular, the positive currents $p_1^*\omega_1+\sddb_{z,s} \vphi_\epsilon$ converge to a positive current $p_1^*\omega_1+\sddb_{z,s}\tilde{\vphi}$. As a consequence, the positive currents $p_1^*\omega_\epsilon+\sddb_{z,s}\vphi_\epsilon$ converge to a positive current $p_1^*\omega_0+\sddb_{z,s}\tilde{\vphi}$. Thus $\tilde{\vphi}$ is also a $\omega_0$-psh path.

Because $\bfE_{\psi_0}(\tilde{\vphi}(s))$, being the pointwise limit of the affine function $\bfE_{\psi_{2^{-k}}}(\vphi_{2^{-k}}(s))$, is also affine in $s$, by \cite[Corollary 1.8]{BBJ18} which is also true in the current singular case, we know that $\tilde{\vphi}(s)$ is nothing but the geodesic joining $\vphi(0)$ and $\vphi(1)$.

\end{proof}
\begin{rem}
The above proof uses the boundedness of entropy to control the convergence of geodesic segments.
One should also be able to adapt \cite[Proof of Proposition 4.3]{BDL17} to prove the above convergence results.
\end{rem}

\begin{lem}\label{lem-conv3}
There is a sequence $\epsilon_k\rightarrow 0$ such that for any $s\in [0,1]$
we have the convergence:
\begin{equation}\label{eq-weakconv1}
\lim_{\epsilon_k\rightarrow 0}\int_Y (\vphi_{\epsilon_k}(s)-\psi_{\epsilon_k}) (\sddb \vphi_{\epsilon_k}(s))^n=\int_Y (\vphi(s)-\psi_0) (\sddb\vphi(s))^n.
\end{equation}

\end{lem}
\begin{proof}
With the above notations, $u_\epsilon=u_\epsilon(s)=\vphi_\epsilon(s)-\psi_\epsilon$. Then we can write:
\begin{eqnarray*}
&&\int_Y u_\epsilon(\sddb\vphi_\epsilon)^n-u_0 (\sddb\vphi_0)^n=\int_Y (u_\epsilon-u_0)(\sddb\vphi_\epsilon)^n+\\
&&\hskip 2cm+\int_Y u_0((\sddb\vphi_\epsilon)^n-(\sddb(\psi_\epsilon+u_0))^n)\\
&&\hskip 2cm+\int_Y u_0((\sddb(\psi_\epsilon+u_0))^n-(\sddb(\psi_0+u_0))^n).
\end{eqnarray*}
As in the proof of Lemma \ref{lem-endEconv} (see \eqref{eq-Lchi}-\eqref{eq-uexpu}), letting $\epsilon=\epsilon_k=2^{-k}$ we show that the first integral on the right-hand-side converges to 0 by using the uniform entropy bound and the weak convergence of $u_{\epsilon_k}$ to $u_0$ (by Lemma \ref{lem-geodconv}).  The last integral converges to 0 as $\epsilon_k\rightarrow 0$ because the positive measure in the bracket converges to 0 (see \eqref{eq-epmeasure}). To deal with the second integral on the right, 
we use the same proof as in \cite[Proof of Lemma 3.13]{BBGZ13} by 
setting
\begin{equation}\label{eq-alphap}
\alpha_p=\int_Y u_0 ((\sddb \vphi_\epsilon)^p\wedge (\sddb (\psi_\epsilon+u_0))^{n-p}.
\end{equation}
Using integration by parts and Schwarz inequality, we get:
\begin{eqnarray*}
\left|\alpha_{p+1}-\alpha_p\right|^2 &\le& C \int_Y\sqrt{-1}\partial (u_\epsilon-u_0) \wedge\bar{\partial} (u_\epsilon-u_0)\wedge ((\sddb \vphi_\epsilon)^p\wedge (\sddb (\psi_\epsilon+u_0)^{n-p-1} )\\
&=&C\int_Y (u_\epsilon-u_0)((\sddb(\psi_\epsilon+u_0))^n-(\sddb(\psi_\epsilon+u_\epsilon))^n)=C \cdot \bfI_{\psi_\epsilon+u_0}(\psi_\epsilon+u_\epsilon).
\end{eqnarray*}
Then we easy to get that:
\begin{eqnarray*}
|\alpha_n-\alpha_0|&\le &\left(\sum_{p=1}^n |\alpha_p-\alpha_{p-1}|^2\right)^{1/2} \le C \left(\bfI_{\psi_\epsilon+u_0}(\psi_\epsilon+u_\epsilon)\right)^{1/2}.
\end{eqnarray*}
Finally with $\epsilon=\epsilon_k=2^{-k}$, the same argument as in the proof of \ref{lem-endEconv} (see \eqref{eq-Lchi}-\eqref{eq-uexpu}) shows that $\bfI_{\psi_{\epsilon_k}+u_0}(\psi_{\epsilon_k}+u_{\epsilon_k})$ converges to 0, after passing to a subsequence.

\end{proof}

\begin{lem}
With the same sequence $\{\epsilon_k\}_k$ as above, for any $s\in [0,1]$, $(\sddb\vphi_{\epsilon_k}(s))^n$ converges to $(\sddb\vphi(s))^n$ weakly as $\epsilon_k\rightarrow 0$. 
\end{lem}
\begin{proof}
Since any continuous function on $X$ can be uniformly approximated by smooth functions, we just need to show that for any $v\in  C^\infty(X)$, there is a convergence:
\begin{equation*}
\lim_{\epsilon_k \rightarrow 0}\int_Y v(\sddb\vphi_{\epsilon_k}(s))^n=\int_Y v(\sddb\vphi(s))^n.
\end{equation*}
Because the reference $\omega$ is a restriction of positive K\"{a}hler form on $\bP^N$, we know that $v$ is $(m\omega)$-psh for some $m \gg 1$. So, after rescaling, it suffices to prove the convergence for $v\in \PSH(\omega)\cap C^\infty(X)$. 

Set $\vphi_0=\vphi$ and $u_0=\vphi_0-\psi$. We can then estimate the difference in a similar way to the proof of Lemma \ref{lem-conv3}. 
\begin{eqnarray*}
\int_Y v (\sddb\vphi_\epsilon)^n-v (\sddb\vphi_0)^n&=&\int_Y v((\sddb\vphi_\epsilon)^n-(\sddb(\psi_\epsilon+u_0))^n)\\
&&+
\int_Y v((\sddb(\psi_\epsilon+u_0))^n-(\sddb (\psi_0+u_0))^n).
\end{eqnarray*}
As before, it is easy to see that the second integral on the right converges to $0$ as $\epsilon\rightarrow 0$. The first integral on the right converges to $0$ by the same arguments as in the above proof: define the $\alpha_p$ similar to \eqref{eq-alphap}, integrate by parts, use Schwarz inequality/triangle inequality and finally use $\bfI_{\psi_\epsilon+u_0}(\vphi_\epsilon+u_\epsilon)\rightarrow 0$. We leave the details to the interested reader.
\end{proof}

\begin{lem}\label{lem-convEB'}
For any $s\in [0,1]$, we have the convergence:
\begin{equation}
\lim_{k\rightarrow +\infty}\bfE^{B'}_{\psi_{\epsilon_k}|_{B'}}(\vphi_{\epsilon_k}|_{B'})=0.
\end{equation}
\end{lem}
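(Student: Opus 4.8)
The plan is to reduce the statement to the fact that the intersection numbers $(L_\epsilon^{\cdot n-1}\cdot E_l)$ tend to $0$ as $\epsilon\to 0$, for each component $E_l$ of the exceptional divisor. Write $\vphi_{\epsilon_k}=\vphi_{\epsilon_k}(s)$ and $u_{\epsilon_k}=u_{\epsilon_k}(s)=\vphi_{\epsilon_k}-\psi_{\epsilon_k}$; by Lemma \ref{lem-geodconv} there is $C_0>0$ independent of $k$ and $s$ with $\|u_{\epsilon_k}\|_{L^\infty(Y)}\le C_0$, and by \cite{Che00, CTW17} the geodesic $\vphi_{\epsilon_k}$ is $C^{1,1}$ on $Y$. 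Since $B'_{\epsilon_k}=\sum_l\left(b_l+\tfrac{\epsilon_k}{1+\epsilon_k}\theta_l\right)E_l$ has coefficients bounded uniformly in $k$, the definition of $\bfE^{B'}$, together with $|u_{\epsilon_k}|\le C_0$ and positivity of the relevant forms, gives for every $s\in[0,1]$
\begin{equation*}
\left|\bfE^{B'_{\epsilon_k}}_{\psi_{\epsilon_k}|_{B'_{\epsilon_k}}}(\vphi_{\epsilon_k}|_{B'_{\epsilon_k}})\right|\le \frac{C_0}{n}\sum_l\left(|b_l|+\theta_l\right)\sum_{j=0}^{n-1}\int_{E_l}\omega_{\epsilon_k}^{n-1-j}\wedge(\sddb\vphi_{\epsilon_k})^j,
\end{equation*}
so it suffices to prove that $\int_{E_l}\omega_{\epsilon_k}^{n-1-j}\wedge(\sddb\vphi_{\epsilon_k})^j\to 0$ as $k\to+\infty$ for each $l$ and each $0\le j\le n-1$.

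I would then observe that these integrals are purely cohomological. Since $\sum_k E_k$ has simple normal crossings, $E_l$ is a smooth projective manifold, $\omega_{\epsilon_k}|_{E_l}$ is a K\"{a}hler form on $E_l$, and $\vphi_{\epsilon_k}|_{E_l}$ is a bounded (indeed $C^{1,1}$) $\omega_{\epsilon_k}|_{E_l}$-psh function; hence, by the cohomological invariance of mixed Monge-Amp\`{e}re masses of bounded potentials,
\begin{equation*}
\int_{E_l}\omega_{\epsilon_k}^{n-1-j}\wedge(\sddb\vphi_{\epsilon_k})^j=\int_{E_l}\omega_{\epsilon_k}^{n-1}=(2\pi)^{n-1}\left(L_{\epsilon_k}^{\cdot n-1}\cdot E_l\right).
\end{equation*}
Because $X$ is normal, $X^{\sing}$ has codimension $\ge 2$, so $Z_l:=\mu(E_l)\subseteq X^{\sing}$ has $\dim Z_l\le n-2$; in particular $\mu|_{E_l}\colon E_l\to Z_l$ is not generically finite onto its image, so $\mu_*[E_l]=0$ in $\mathrm{CH}_{n-1}(X)$, whence $(\mu^*L)^{\cdot n-1}\cdot E_l=0$ (equivalently, $\mu^*L|_{E_l}$ is pulled back from $Z_l$ and $c_1(L|_{Z_l})^{n-1}=0$ in $H^{2(n-1)}(Z_l,\bQ)=0$). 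Expanding $L_{\epsilon_k}=\mu^*L+\epsilon_k P$,
\begin{equation*}
\left(L_{\epsilon_k}^{\cdot n-1}\cdot E_l\right)=\sum_{i=1}^{n-1}\binom{n-1}{i}\epsilon_k^i\left((\mu^*L)^{\cdot n-1-i}\cdot P^{\cdot i}\cdot E_l\right)=O(\epsilon_k),
\end{equation*}
since the surviving intersection numbers are fixed finite quantities independent of $k$.

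Putting the displays together yields $\bigl|\bfE^{B'_{\epsilon_k}}_{\psi_{\epsilon_k}|_{B'_{\epsilon_k}}}(\vphi_{\epsilon_k}|_{B'_{\epsilon_k}})\bigr|\le C\,\epsilon_k\to 0$, which is the lemma. The one step that needs genuine care is the cohomological identity in the second display: one has to verify that the restriction to $E_l$ of the geodesic potential $\vphi_{\epsilon_k}$ — available only in low regularity — is an admissible bounded $\omega_{\epsilon_k}|_{E_l}$-psh function on the smooth manifold $E_l$, so that Bedford--Taylor theory and Stokes' theorem legitimately give $\int_{E_l}\omega_{\epsilon_k}^{n-1-j}\wedge(\sddb\vphi_{\epsilon_k})^j=\int_{E_l}\omega_{\epsilon_k}^{n-1}$. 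This is precisely where the SNC hypothesis (smoothness of $E_l$) and the uniform $L^\infty$ bound from Lemma \ref{lem-geodconv} enter; everything else is the elementary vanishing of a power of a class pulled back from the lower-dimensional subvariety $Z_l$.
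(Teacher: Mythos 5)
Your proof follows the same strategy as the paper's: use the uniform $L^\infty$ bound on $u_{\epsilon_k}$ from Lemma \ref{lem-geodconv} to bound the functional by a constant times the cohomological intersection number $\langle L_{\epsilon_k}^{\cdot n-1}, E_l\rangle$, and then observe that this intersection number tends to zero because each $E_l$ is $\mu$-exceptional. You supply more detail than the paper does — spelling out the Bedford--Taylor/cohomological-invariance step and the explicit $O(\epsilon_k)$ expansion via $(\mu^*L)^{\cdot n-1}\cdot E_l = 0$ from $\operatorname{codim} X^{\sing}\ge 2$ — but the argument is essentially the one the authors give.
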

\begin{proof}
Because $u_{\epsilon_k}=\vphi_{\epsilon_k}-\psi_{\epsilon_k}$ is uniformly bounded independent of $\epsilon$, there exists $C>0$ such that 
\begin{eqnarray*}
&&\left|\frac{1}{n}\sum_{i=0}^{n-1} \int_{B'} (\vphi_{\epsilon_k}-\psi_{\epsilon_k}) (\sddb\psi_{\epsilon_k})^{n-1-i}\wedge (\sddb \vphi)^i\right|\le C \la L_{\epsilon_k}^{\cdot n-1}, B'\ra.
\end{eqnarray*}
The quantity on the right-hand-side converges to $0$ as $k\rightarrow +\infty$ because $B'$ is exceptional.
\end{proof}

\begin{proof}[Completion of the proof of Proposition \ref{prop-Mabconv}]
Under the convergence of geodesics $\vphi_\epsilon(s)$ to $\vphi(s)$ (from Lemma \ref{lem-geodconv}), the entropy part of the Mabuchi energy converges for the end points (i.e. $s=0,1$) and is lower semicontinuous in the middle (i.e. $s\in (0,1)$). 
The convergence of the other parts of the Mabuchi energy follows from Lemma \ref{lem-conv3} and Lemma \ref{lem-convEB'}. 

So we get that $\hat{\bfM}_{\epsilon_k}$ approximates $\bfM$ at end points: for $i=0, 1$: 
\begin{equation}\label{eq-endMconv}
\lim_{k\rightarrow +\infty} \hat{\bfM}_{\epsilon_k}(\vphi_{\epsilon_k}(i))=\hat{\bfM}_0(\vphi(i))=\bfM_{\psi_0}(\vphi(i)).
\end{equation}\label{eq-Mlsc}
For metrics in the interior of geodesics, we have:
\begin{equation}\label{eq-endconv}
\lim_{k\rightarrow+\infty}\hat{\bfM}_{\epsilon_k}(\vphi_{\epsilon_k}(s))\ge \hat{\bfM}_0(\vphi(s))=\bfM_{\psi_0}(\vphi(s)).
\end{equation}
By the convexity of $\hat{\bfM}_{\epsilon_k}(\vphi_\epsilon(s))$, we get:
\begin{equation}\label{eq-middle}
\hat{\bfM}_{\epsilon_k}(\vphi_{\epsilon_k}(s))\le \frac{S-s}{S}\hat{\bfM}_{\epsilon_k}(\vphi_\epsilon(0))+\frac{s}{S}\hat{\bfM}_{\epsilon_k}(\vphi_\epsilon(S)).
\end{equation}
The inequality \eqref{eq-weakconvex} follows by letting $\epsilon_k\rightarrow 0$ (see \eqref{eq-hatM0}).

\end{proof}
\begin{rem}\label{rem-smoothconvex}
In Proposition \ref{prop-Mabconv},
when $\vphi(0), \vphi(1)\in \mcH(X, \omega)$ (i.e. smooth K\"{a}hler metrics), R. Berman \cite{Berm19} showed us his earlier proof of the convexity along geodesics connecting smooth K\"{a}hler metrics. In this case, one can directly use the geodesics connecting $\vphi(0)+\epsilon \psi_P$ and $\vphi(1)+\epsilon \psi_P$ to approximate the geodesic connecting $\vphi(0), \vphi(1)$. Then the convergence of Mabuchi energy at the end points (\eqref{eq-endconv}) and the lower semicontinuity inequality \eqref{eq-middle} are easier to prove because the smoothness of the $\vphi(i)+\epsilon\psi_P$ and the easier convergence of geodesic segments. 

Under the assumption of uniform K-stability, the convexity of Mabuchi energy along geodesics connecting smooth K\"{a}hler metrics is enough for proving the properness of Mabuchi energy over $\mcH(X, \omega)$ using the same arguments in this paper (one still needs to use perturbation approach to deal with singularities). However as discussed in Remark \ref{rem-smoothproper}, there is still a difficulty to get KE metrics with this properness condition. In other words, we indeed need to prove the stronger convexity condition to get the properness over finite energy metrics. 
\end{rem}

\subsection{Step 2: Perturbed test configurations and perturbed $\bfE^\NA$ } \label{sec-pertTC}

As in section \ref{sec-convex}, we
fix a resolution of singularities $\mu: Y\rightarrow X$ such that $\mu$ is an isomorphism over $X^\reg$, $\mu^{-1}(X^\sing)=\sum_{k=1}^{g}E_k$ is a simple normal crossing divisor and that there exist $\theta_k\in \bQ_{>0}$ for $k=1,\dots, g$ such that $E_\theta=\sum_{k=1}^g \theta_k E_k$ satisfies $P:=P_\theta=\mu^*L-E_\theta$ is an ample $\bQ$-divisor over $Y$. Choose and fix a smooth Hermitian metric $\psi_P$ on $P$ such that $\sddb \psi_P>0$. 
For any $\epsilon\in \bQ_{>0}$, define a line bundle on $Y$ by 
\begin{equation}\label{eq-defhLep}
L_\epsilon:=(1+\epsilon)\mu^*L-\epsilon E_\theta=\mu^*L+\epsilon P. 
\end{equation}
Then $L_\epsilon$ is a positive $\bQ$-line bundle on $Y$. Define a smooth reference metric on $L_\epsilon$ by $\psi_\epsilon=\psi_0+\epsilon \psi_P$. 
In this section we will first construct a sequence of test configurations of $(Y, L_\epsilon)$ using the method from \cite{BBJ15}. 

Let $\Phi=\{\vphi(s)\}_{s\in [0,\infty)}$ be a geodesic ray in $\cE^1(X, L)$ satisfying:
\begin{equation}\label{eq-geocon}
\sup_X(\vphi(s)-\psi_0)=0, \quad \bfE_{\psi_0}(\vphi(s))=-s.
\end{equation}

Denote by $p'_i, i=1,2$ the projection of  $Y\times\bC$ to the two factors. Define a singular and a smooth Hermitian metric on $p'^*_1 L_\epsilon$ by 
\begin{equation}\label{eq-Phiep}
\Phi_\epsilon:=\bar{\mu}^*(\Phi)+\epsilon\; p'^*_1(\vphi_{P}), \quad \Psi_\epsilon:=p'^*_1(\mu^*\psi_0+\epsilon \psi_P).
\end{equation}
where $\bar{\mu}=\mu\times {\rm id}: Y\times\bC\rightarrow X\times \bC$.
Then $\sddb\Phi_\epsilon\ge 0$ and $\sddb \Psi_\epsilon\ge 0$. Consider the multiplier ideals $\mcJ(m\Phi_\epsilon)\subset \mcO_{Y\times\bC}$ which is defined over any open set $U\subset Y\times\bC$ as:
\begin{eqnarray*}
\mcJ(m\Phi_\epsilon)(U)&=&\mcJ(m\Phi)(U):=\{f\in \mcO_{Y\times\bC}(U); w(f)+A_{Y\times\bC}(w)-m w(\Phi)\ge 0 \\
&&\hskip 2cm \text{ for any divisorial valuation } w \text{ on } Y\times\bC\}.
\end{eqnarray*}
Here we identity $\Phi$ with its pulled-back metric $\bar{\mu}^*\Phi$ on $\bar{\mu}^*(L\times\bC)$. The first identity holds true because $\psi_P$ is a smooth Hermitian metric on $P$.
Denote $Y_\bC:=Y\times\bC$ and consider the following coherent sheaf:
\begin{eqnarray*}
\cF_{\epsilon,m}&:=&\cO_{Y_\bC}\left(p'^*_1(mL_\epsilon)\otimes \cJ(m \Phi_\epsilon)\right).
\end{eqnarray*} 
Fix a very ample line bundle $H'$ over $Y$. Then for any $1\le i\le n$ and $j\ge 0$, we can write: 
\begin{eqnarray*}
&&\cF_{\epsilon,m}\otimes p_1^*H'^{j-i}\\
&=&\cO_{Y_\bC}\left(p'^*_1(K_{Y}+m \mu^*L+(m\epsilon P-K_{Y}-(n+1)H')+(j+n+1-i)H')\otimes \cJ(m \bar{\mu}^*\Phi)\right).
\end{eqnarray*}
Because $P$ is positive, for $m\gg \epsilon^{-1}$ and sufficiently divisible, 
$m \epsilon P-K_{Y}-(n+1)H' $ is an ample line bundle on $Y$.
In this case, by Nadel vanishing theorem, for any $j\ge 1$, 
\begin{eqnarray*}
R^j(p'_2)_*(\cF_{\epsilon,m} \otimes p'^*_1H'^{-j})=0. 
\end{eqnarray*}
By the relative Castelnuovo-Mumford criterion, $\cF_{\epsilon,m}$ is $p'_2$-globally generated.

Let $\pi'_m: \cY_{\epsilon,m}\rightarrow Y_\bC$ denote the normalized blow-up of $Y\times \bC$ along $\cJ(m \Phi_\epsilon)=\cJ(m \bar{\mu}^*\Phi)$, with exceptional divisor $E_{\epsilon,m}$ and set 
\begin{equation}\label{eq-defcLepm}
\cL_{\epsilon,m}:=\pi'^*_m p_1'^*L_\epsilon-\frac{1}{m}E_{\epsilon,m}. 
\end{equation}
Then $(\cY_{\epsilon,m}, \cL_{\epsilon,m})$ is a normal semi-ample test configuration for $(Y, L_\epsilon)$ inducing a non-Archimedean metric $\Phi^\NA_{\epsilon,m}\in \cH^\NA(Y, L_\epsilon)$ and $\Phi^\NA_{\epsilon,m}\in \mcH^\NA(Y, L_\epsilon)$ given by:
\begin{equation}\label{eq-defphiepm}
\Phi^\NA_{\epsilon,m}(v)=-\frac{1}{m}G(v)(\cJ(m \bar{\mu}^*\Phi)) 
\end{equation}
for each divisorial valuation $v$ on $Y$. See Definition \ref{defn-TCNA}.

Let $\Phi_{\epsilon,m}$ be a locally bounded and positively curved Hermitian metric on $\mcL_{\epsilon,m}$.
By Demailly's regularization result (\cite[Proposition 3.1]{Dem92}), $\Phi_{\epsilon,m}$ is less singular then $\Phi_{\epsilon}$. By the monotonicity of $\bfE$ energy, we get:
\begin{eqnarray}\label{eq-ENAepmlb}
\bfE'^\infty_{\psi_\epsilon}(\Phi_{\epsilon,m}):=\lim_{s\rightarrow+\infty} \frac{\bfE_{\psi_\epsilon}(\vphi_{\epsilon,m}(s))}{s} \ge \lim_{s\rightarrow+\infty} \frac{\bfE_{\psi_\epsilon}(\vphi_{\epsilon}(s))}{s}=:\bfE'^\infty_{\psi_\epsilon}(\Phi_\epsilon).
\end{eqnarray}
The following key observation proves \eqref{eq-sklimEep}. 
\begin{prop}\label{prop-E'infconv}
With the above notations and assuming that $\Phi$ satisfies \eqref{eq-geocon}, the following convergence holds:
\begin{equation}\label{eq-limENAep}
\lim_{\epsilon \rightarrow 0} \bfE'^\infty_{\psi_\epsilon}(\Phi_\epsilon)=\lim_{s\rightarrow+\infty} \frac{\bfE_{\psi_0}(\vphi(s))}{s}=:\bfE'^\infty_{\psi_0}(\Phi).
\end{equation}
\end{prop}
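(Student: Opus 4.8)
The plan is to expand $\bfE_{\psi_\epsilon}(\vphi_\epsilon(s))$ as a polynomial in $\epsilon$ and to show that its higher--order part changes the slope by at most $O(\epsilon s)$, uniformly in $s$. By \eqref{eq-Phiep} we have $\psi_\epsilon=\mu^*\psi_0+\epsilon\psi_P$ and $\vphi_\epsilon(s)=\mu^*\vphi(s)+\epsilon\psi_P$ (writing $\psi_P$ for the fixed smooth metric on $P$, i.e.\ $\vphi_P$ in \eqref{eq-Phiep}), so $\vphi_\epsilon(s)-\psi_\epsilon=\mu^*u(s)$ with $u(s):=\vphi(s)-\psi_0$, while $\sddb\psi_\epsilon=\mu^*\omega_0+\epsilon\omega_P$ and $\sddb\vphi_\epsilon(s)=\mu^*\sddb\vphi(s)+\epsilon\omega_P$, where $\omega_0:=\sddb\psi_0$ and $\omega_P=\sddb\psi_P$. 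Substituting into \eqref{eq-Ephi} and expanding the two binomials, one obtains
\begin{equation*}
\bfE_{\psi_\epsilon}(\vphi_\epsilon(s))=\bfE_{\psi_0}(\vphi(s))+\sum_{l=1}^{n}\epsilon^l R_l(s)=-s+\sum_{l=1}^{n}\epsilon^l R_l(s),
\end{equation*}
where each $R_l(s)$ is a sum, with positive combinatorial coefficients, of non--pluripolar mixed integrals $\int_Y \mu^*u(s)\wedge\omega_P^{\,l}\wedge(\mu^*\omega_0)^a\wedge(\mu^*\sddb\vphi(s))^b$ with $a+b=n-l$, and where the identification $\bfE_{\mu^*\psi_0}(\mu^*\vphi(s))=\bfE_{\psi_0}(\vphi(s))=-s$ uses the birational invariance of the energy. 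Since $\mu^*u(s)\le 0$ and every form occurring is a closed positive current, each $R_l(s)\le 0$; in particular $\bfE_{\psi_\epsilon}(\vphi_\epsilon(s))\le -s$. As $\Phi_\epsilon$ has linear growth, $s\mapsto\bfE_{\psi_\epsilon}(\vphi_\epsilon(s))$ is concave, so $\bfE'^\infty_{\psi_\epsilon}(\Phi_\epsilon)$ is well defined and the last inequality already gives $\bfE'^\infty_{\psi_\epsilon}(\Phi_\epsilon)\le -1$ for every $\epsilon>0$.

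For the reverse inequality I would reduce everything to a single uniform estimate. Taking $\epsilon=1$ in the expansion gives $\sum_{l=1}^{n}R_l(s)=\bfE_{\psi_1}(\vphi_1(s))+s$, where $\psi_1=\mu^*\psi_0+\psi_P$ is smooth and $\vphi_1(s)=\mu^*\vphi(s)+\psi_P$ is a positively curved metric on the \emph{ample} $\bQ$--line bundle $L_1=\mu^*L+P$ on $Y$. The claim is
\begin{equation*}
\bfE_{\psi_1}(\vphi_1(s))\ \ge\ -C\,s\qquad\text{for all }s\gg 1,
\end{equation*}
with $C$ independent of $s$; equivalently $\sum_l R_l(s)\ge-(C+1)s$. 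Granting this, for $0<\epsilon\le 1$ and $R_l(s)\le 0$ we have $\epsilon^l R_l(s)\ge\epsilon R_l(s)$, hence $\sum_l\epsilon^l R_l(s)\ge\epsilon\sum_l R_l(s)\ge-(C+1)\epsilon s$; dividing by $s$ and letting $s\to+\infty$ yields $\bfE'^\infty_{\psi_\epsilon}(\Phi_\epsilon)\ge -1-(C+1)\epsilon$. Combined with $\bfE'^\infty_{\psi_\epsilon}(\Phi_\epsilon)\le -1$ from the previous paragraph and letting $\epsilon\to 0$, this proves \eqref{eq-limENAep}.

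It remains to prove the displayed estimate, and this is the technical heart of the argument. The only property of $\Phi$ that is used is the normalization $\sup_X(\vphi(s)-\psi_0)=0$ from \eqref{eq-geocon}, which forces $\bfJ_{\psi_0}(\vphi(s))\le-\bfE_{\psi_0}(\vphi(s))=s$ and hence $\bfI_{\psi_0}(\vphi(s))\le(n+1)s$ by \eqref{eq-IJineq}. Expanding $\bfE_{\psi_1}(\vphi_1(s))$ with $\sddb\psi_1=\mu^*\omega_0+\omega_P$ and $\sddb\vphi_1(s)=\mu^*\sddb\vphi(s)+\omega_P$, the part of the expansion containing no factor $\omega_P$ is precisely $\bfE_{\psi_0}(\vphi(s))=-s$, so one is left to bound above, by $C\,s$, each mixed integral $\int_Y \mu^*(-u(s))\wedge\omega_P^{\,l}\wedge(\mu^*\omega_0)^a\wedge(\mu^*\sddb\vphi(s))^b$ with $l\ge 1$. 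For these I would use the two geometric facts that $\mu^*\omega_0\le C_1\omega_P$ as smooth forms and that $\omega_P-\mu^*\omega_0$ is a \emph{fixed} smooth form representing the exceptional class $-2\pi c_1(E_\theta)$: writing $\omega_P=\mu^*\omega_0+(\omega_P-\mu^*\omega_0)$ and integrating by parts repeatedly so as to transfer the $\sddb$--exact part of $\omega_P-\mu^*\omega_0$ onto the bounded local potentials of $\mu^*u(s)$, every such term is dominated either by a purely cohomological intersection number of $[\mu^*L]$ and $[P]$ (a constant), by the uniform bound $\int_Y(-\mu^*u(s))\,\omega_P^{\,n}\le C$ for the $\omega_P$--normalized quasi--psh function $\mu^*u(s)/C_1$, or by $\bfI_{\psi_0}(\vphi(s))\le(n+1)s$. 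The main obstacle is exactly to carry out this last step cleanly: to turn the comparison between the K\"ahler form $\omega_P$ on $Y$ and the degenerate pulled--back class $\mu^*\omega_0$ into an $s$--uniform estimate that remains valid even though $\mu^*u(s)=\mu^*(\vphi(s)-\psi_0)$ may be unbounded along the exceptional locus of $\mu$, with all Monge--Amp\`ere products interpreted in the non--pluripolar sense.
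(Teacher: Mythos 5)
Your first half (the expansion of $\bfE_{\psi_\epsilon}(\vphi_\epsilon(s))$ in powers of $\epsilon$ with each $R_l(s)\le 0$, hence $f_\epsilon(s):=\bfE_{\psi_\epsilon}(\vphi_\epsilon(s))\le f_0(s)=-s$ and $\bfE'^\infty_{\psi_\epsilon}(\Phi_\epsilon)\le -1$) coincides with the paper's argument, and it is correct. The difficulty is in the reverse direction, and there your proposal has a genuine gap.

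First, a sign error that turns out to matter: you assert that $s\mapsto\bfE_{\psi_\epsilon}(\vphi_\epsilon(s))$ is \emph{concave} along the psh ray. It is in fact \emph{convex}. The point is that $\sddb_\tau\, \bfE(\vphi(\log|\tau|)) = \int (\sddb\Phi_\epsilon)^{n+1}\ge 0$ for a psh ray, so $f_\epsilon$ is subharmonic on the punctured disc, i.e.\ convex in $s=\log|\tau|$; geodesics are the case of equality $(\sddb\Phi)^{n+1}=0$, giving affine $f_0$. (Concavity is a property of $\bfE$ along \emph{linear} paths $t\vphi_0+(1-t)\vphi_1$, which are not psh paths.) You use the alleged concavity only to say the slope at infinity exists, so the mistake does not immediately break your argument, but it hides the cheapest route to the lower bound.

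For the reverse inequality you reduce everything to a uniform estimate $\bfE_{\psi_1}(\vphi_1(s))\ge -C\,s$, then invoke $\epsilon^l R_l(s)\ge\epsilon R_l(s)$ (valid because $R_l\le 0$ and $0<\epsilon\le 1$) to get $f_\epsilon(s)\ge -s-(C+1)\epsilon s$. That reduction is logically sound, and the estimate you need is in fact true; but you do not prove it, and you explicitly say the integration-by-parts plan runs into trouble along the exceptional locus. This is the gap. The irony is that the estimate $\bfE_{\psi_1}(\vphi_1(s))\ge -Cs$ is an immediate consequence of the convexity you got wrong: $f_1(s)=\bfE_{\psi_1}(\vphi_1(s))$ is convex, finite, and $\le 0$ (since $\vphi_1(s)-\psi_1=\mu^*(\vphi(s)-\psi_0)\le 0$), so $f_1(s)\ge f_1(1)+f_1'(1^+)(s-1)\ge -Cs$ for large $s$, with no Monge--Amp\`ere mixed-term estimates needed.

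The paper's proof avoids even this. It uses convexity of $f_\epsilon$ together with the pointwise convergence $f_\epsilon(s_*)\to f_0(s_*)=-s_*$ at a single fixed $s_*$: since $f_\epsilon$ is convex with $f_\epsilon(0)\le 0$, the slope $f_\epsilon(s)/s$ is nondecreasing, so $\bfE'^\infty_{\psi_\epsilon}(\Phi_\epsilon)=\lim_{s\to\infty}f_\epsilon(s)/s\ge f_\epsilon(s_*)/s_*$; letting $\epsilon\to 0$ and then $s_*$ be arbitrary yields $\liminf_\epsilon\bfE'^\infty_{\psi_\epsilon}(\Phi_\epsilon)\ge -1$. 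No uniform-in-$s$ bound is required. If you want to keep your ``uniform estimate plus homogeneity in $\epsilon$'' structure, you can, but you must first correct the concavity to convexity and then observe that the estimate follows for free, rather than attempting a direct attack via integration by parts against the degenerate class $\mu^*\omega_0$.
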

\begin{proof}
Note that $\vphi_\epsilon(s)-\psi_\epsilon=\vphi+\epsilon \psi_P-(\psi_0+\epsilon \psi_P)=\vphi(s)-\psi_0$. So we get:
\begin{eqnarray*}
 \bfE_{\psi_0}(\vphi(s))&=&\frac{1}{n+1}\sum_i \int_X (\vphi(s)-\psi_0)(\sddb\vphi(s))^i\wedge (\sddb\psi_0)^{n-i}=:f_0(s)\\
&&\\
 \bfE_{\psi_\epsilon}(\vphi_\epsilon(s))&=&\frac{1}{n+1}\sum_{i=0}^n \int_X (\vphi_\epsilon(s)-\psi_\epsilon) (\sddb \vphi_\epsilon(s))^i\wedge (\sddb\psi_\epsilon)^{n-i}\\
&=&\frac{1}{n+1}\sum_i \int_X (\vphi(s)-\psi_0)(\sddb\vphi+\epsilon\sddb\psi_P)^i\\
&&\hskip 5cm \wedge (\sddb\psi_0+\epsilon\sddb\psi_P)^{n-i}\\
&=&f_0(s)+\int_X (\vphi(s)-\psi_0)T\\
&=:&f_\epsilon(s),
\end{eqnarray*}
where $T=T(\epsilon)$ is a positive $(n,n)$-current which approaches $0$ as $\epsilon\rightarrow 0$. 
Because the following hold true:
\begin{eqnarray*}
\sddb (f_\epsilon(s))&=&\int_X (\sddb \Phi_\epsilon)^{n+1}\ge 0, \\
\sddb (f_0(s))&=&\int_X (\sddb \Phi)^{n+1}=0,
\end{eqnarray*}
$f_\epsilon(s)$ is a convex funtion (by using a standard regularization argument) and $f_0(s)=-s$ is a linear function with respect to $s\in [0, +\infty)$.

Because $\vphi(s)-\psi_0\le 0$, we have
$f_\epsilon(s)\le f_0(s)$. So we get:
\begin{eqnarray}\label{eq-ENAeple}
\lim_{s\rightarrow +\infty}\frac{f_\epsilon(s)}{s}\le \lim_{s\rightarrow +\infty} \frac{f_0(s)}{s}=\lim_{s\rightarrow+\infty}\frac{\bfE_{\psi_0}(\vphi(s))}{s}=-1.
\end{eqnarray}
On the other hand, it follows from the above expressions of $f_\epsilon(s)$ that for any $s\ge 0$, $\lim_{\epsilon\rightarrow 0}f_\epsilon(s)=f_0(s)$.  
Fix $s_*>0$. By the convexity of $f_\epsilon(s)$, we have:
\begin{eqnarray*}
\lim_{s\rightarrow +\infty}\frac{f_\epsilon(s)}{s} \ge \frac{f_\epsilon(s_*)}{s_*}.
\end{eqnarray*}
Letting $\epsilon\rightarrow 0$, we get
\begin{eqnarray}\label{eq-ENAepge}
\lim_{s\rightarrow +\infty}\frac{f_\epsilon(s)}{s}\ge \frac{f_0(s_*)}{s_*}=-1.
\end{eqnarray}
Now \eqref{eq-limENAep} follows from \eqref{eq-ENAeple} and \eqref{eq-ENAepge}.
\end{proof}

\subsection{Step 3: Perturbed $\bL^\NA$ function}\label{sec-pertLNA}
Recall that we have the identity (see \eqref{eq-KYdec}):
\begin{eqnarray*}
K_Y+D'=\mu^*(K_X+D)+\sum_{k=1}^g a_k E_k=\mu^*(K_X+D)-\sum_{i=1}^{g_1} b_i E'_i+\sum_{j=g_1+1}^g a_jE''_j,
\end{eqnarray*}
where $D'=\mu^{-1}_* D$; for $i=1,\dots, g_1$, $E'_i=E_i$, $b_i=-a_i\in [0,1)$; for $j=g_1+1,\dots, g$, $a_j>0$ and $E''_j=E_j$. Denote by $\lceil a_j\rceil$ the round up of $a_j$ and $\{a_j\}=\lceil a_j\rceil-a_j\in [0,1)$. Then we re-write the above identity as:
\begin{eqnarray*}
-K_Y+\sum_j\lceil a_j\rceil E_j&=&\mu^*(-K_X-D)+D'+\sum_i b_iE'_i+\sum_j\{a_j\} E''_j\\
&=&\frac{1}{1+\epsilon}\left((1+\epsilon)\mu^*(-K_X-D)-\epsilon \sum_k \theta_k E_k\right)+D'\\
&&\hskip 3cm +\sum_i (b_i+\frac{\epsilon}{1+\epsilon}\theta_i)E'_i+\sum_j (\{a_j\}+\frac{\epsilon}{1+\epsilon} \theta_j)E''_j\\
&=&\frac{1}{1+\epsilon}(\mu^*(-K_X-D))+\epsilon P)+\Delta_\epsilon,
\end{eqnarray*}
where $P=\mu^*(-K_X-D)-\sum_k \theta_k E_k$ and 
\begin{eqnarray*}
\Delta_\epsilon=D'+\sum_i b_i E'_i+\sum_j\{a_j\} E''_j+\frac{\epsilon}{1+\epsilon} \sum_k\theta_k E_k=:\Delta_0+\frac{\epsilon}{1+\epsilon} E_\theta.
\end{eqnarray*}
Note that $\Delta_\epsilon$ is a simple normal crossing divisor with $\lfloor \Delta_\epsilon\rfloor=0$.
For simplicity of notations, we set $G:=\sum_j \lceil a_j\rceil E''_j$ and $B_\epsilon=\Delta_\epsilon-G$. Then we have:
\begin{equation}\label{eq-KYG}
-K_Y=\frac{1}{1+\epsilon}( \mu^*(-K_X-D)+\epsilon P)+\Delta_\epsilon-G=:\frac{1}{1+\epsilon} L_\epsilon+B_\epsilon.
\end{equation}

Consider the Ding energy \eqref{eq-DB} associated to this decomposition. Denote $V_\epsilon=(2\pi)^n L_\epsilon^{\cdot n}$. For any $\vphi_\epsilon\in \mcE^1(Y, L_\epsilon)$, denote:
\begin{eqnarray*}
\bfD_{\epsilon}(\vphi_\epsilon)&=&-\bfE_{\psi_\epsilon}(\vphi_\epsilon)+\bL_{(Y, B_\epsilon)}(\vphi_\epsilon)
\end{eqnarray*}
where $\psi_\epsilon=\psi_0+\epsilon \psi_P$, $B_\epsilon=\Delta_\epsilon-G$ and with $\lambda=\frac{1}{1+\epsilon}$ in \eqref{eq-LB}-\eqref{eq-DB}
\begin{equation}
\bL_\epsilon(\vphi_\epsilon):=\bL_{(Y, B_\epsilon)}(\vphi_\epsilon)=-V_\epsilon(1+\epsilon)\cdot \log\left( \int_Y e^{-\frac{\vphi_\epsilon}{1+\epsilon}}\frac{|s_{G}|^2}{|s_{\Delta_\epsilon}|^2}\right).
\end{equation}
The proof of the following lemma is similar to an argument from \cite[Proof of Theorem 5.1]{BBEGZ} ($\epsilon=0$ case).
\begin{lem}\label{lem-conv}
With the above notations, let $\epsilon$ be sufficiently small such that $\lfloor \Delta_\epsilon\rfloor=0$.
Assume that $\Phi_\epsilon=\{\vphi_\epsilon(s)\}$ is a psh ray in $\cE^1(Y, L_\epsilon)$. Then 
$\bL_{(Y, B_\epsilon)}(\vphi_\epsilon(s))$ is convex in $s=\log|t|^{-1}$.
\end{lem}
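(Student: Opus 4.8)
The plan is to reduce the convexity of $\bL_{(Y,B_\epsilon)}(\vphi_\epsilon(s))$ to a Kiselman–type minimum principle / Prékopa-type argument applied to the adapted measure, exactly as in the classical case $\epsilon=0$ treated in \cite{BBEGZ}. Write $\lambda=\frac{1}{1+\epsilon}$, so that
\begin{equation*}
\bL_\epsilon(\vphi_\epsilon(s)) = -\frac{V_\epsilon}{\lambda}\log\left(\int_Y e^{-\lambda\vphi_\epsilon(s)}\frac{|s_G|^2}{|s_{\Delta_\epsilon}|^2}\right),
\end{equation*}
and recall that $\Phi_\epsilon=\{\vphi_\epsilon(s)\}$, regarded as a function $\vphi_\epsilon(z,\tau)$ on $Y\times\bD_{(0,\infty)}$, is $p_1^*(\sddb\psi_\epsilon)$-psh with $s=\log|\tau|^{-1}$. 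Thus convexity in $s$ is the statement that the function
\begin{equation*}
h(\tau):=-\frac{V_\epsilon}{\lambda}\log\left(\int_Y e^{-\lambda\vphi_\epsilon(z,\tau)}\,d\nu_\epsilon(z)\right), \qquad d\nu_\epsilon=\frac{|s_G|^2}{|s_{\Delta_\epsilon}|^2},
\end{equation*}
is (the negative of) something whose $\log$ is psh in $\tau$; i.e. $\tau\mapsto \int_Y e^{-\lambda\vphi_\epsilon(z,\tau)}d\nu_\epsilon(z)$ should be log-psh on $\bD_{(0,\infty)}$, and being $S^1$-invariant it is then a log-convex function of $s$.

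The key steps, in order: (1) First reduce to the smooth/bounded case by approximation: using Proposition \ref{prop-smapp} (or Proposition \ref{prop-geod} for the finite-energy geodesic setting) choose smooth, bounded decreasing approximants $\vphi_\epsilon^j\searrow\vphi_\epsilon$ (as metrics on $p_1^*L_\epsilon$ over $Y\times\bD_I$), so that by monotone convergence $\int_Y e^{-\lambda\vphi_\epsilon^j(z,\tau)}d\nu_\epsilon\nearrow\int_Y e^{-\lambda\vphi_\epsilon(z,\tau)}d\nu_\epsilon$; a decreasing (hence increasing after the $e^{-\lambda(\cdot)}$) limit of log-convex functions is log-convex, so it suffices to prove the claim for smooth $\vphi_\epsilon$. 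Here one needs $\lfloor\Delta_\epsilon\rfloor=0$ precisely so that the reference integrand $e^{-\lambda\psi_\epsilon}d\nu_\epsilon$ has finite mass (the sub-Klt / Klt integrability), guaranteeing everything in sight is finite. (2) For smooth $\vphi_\epsilon$, fix a local trivialization of $L_\epsilon$; then locally $e^{-\lambda\vphi_\epsilon(z,\tau)}$ is $e^{-\lambda\cdot(\text{local psh function of }(z,\tau))}$, and the weight $|s_G|^2/|s_{\Delta_\epsilon}|^2$ is a fixed (positive, locally integrable since $\lfloor\Delta_\epsilon\rfloor=0$) density independent of $\tau$. (3) Apply the Prékopa/Berndtsson-type result (as used in \cite[Proof of Theorem 5.1]{BBEGZ}): if $\Psi(z,\tau)$ is psh on $\Omega\times\bD$ then $\tau\mapsto -\log\int_\Omega e^{-\Psi(z,\tau)}\rho(z)\,dz$ is psh (for $\rho\geq 0$ fixed), hence — combining local pieces via a partition of unity on $Y$ and using that the defining weights patch consistently with the transition functions of the $\bQ$-line bundle — $\tau\mapsto-\log h(\tau)$-type expression is psh; since $\vphi_\epsilon$ is $S^1$-invariant in $\tau$, psh plus $S^1$-invariant in $\log|\tau|$ gives convexity in $s=\log|\tau|^{-1}$.

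The main obstacle I expect is step (3) in the present generality: the classical Prékopa/Berndtsson statement is for a genuine psh function times a fixed density on a \emph{bounded} base, whereas here $Y$ is compact and $\vphi_\epsilon$ is only a metric on a $\bQ$-line bundle (so not a global function), and the weight $|s_G|^2/|s_{\Delta_\epsilon}|^2$ has poles along $\Delta_\epsilon$ and zeros along $G$. One must check that the local applications glue: the product $e^{-\lambda\vphi_\epsilon}\cdot\frac{|s_G|^2}{|s_{\Delta_\epsilon}|^2}$ is a globally well-defined measure on $Y$ (this is the content of the adapted-measure formalism, valid exactly because $L_\epsilon=-(1+\epsilon)(K_Y+B_\epsilon)$ so that the poles/zeros are absorbed), and the fiber integral is then genuinely a function of $\tau$ alone to which the psh-variation-of-fiber-integrals theorem applies after passing to the universal cover or using the $S^1$-invariance to work on an annulus; the finiteness and the interchange of limit and integral in step (1) must be justified using the uniform integrability coming from $\lfloor\Delta_\epsilon\rfloor=0$ and the boundedness of the approximants. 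Once these bookkeeping points are settled, convexity in $s$ is immediate from psh-ness plus rotation invariance, as claimed.
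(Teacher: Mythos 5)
Your step (3) is where the argument breaks down, and the breakdown is not a bookkeeping issue but a genuine mathematical gap. The claimed Prékopa/Berndtsson-type statement — ``if $\Psi(z,\tau)$ is psh on $\Omega\times\bD$ then $\tau\mapsto -\log\int_\Omega e^{-\Psi(z,\tau)}\rho(z)\,dz$ is psh for any fixed $\rho\ge 0$'' — is simply false for general nonnegative densities $\rho$. It holds when $-\log\rho$ is (quasi-)psh, which is the case for a Klt weight of the form $\rho=|s_D|^{-2}$ with $D$ effective, and that is exactly what is used for Theorem 5.1 of \cite{BBEGZ}. Here, however, the density is $\rho=|s_G|^2/|s_{\Delta_\epsilon}|^2$, and the numerator $|s_G|^2$ contributes $-\log|s_G|^2$ to $-\log\rho$, whose $\sddb$ is $-[G]$ plus a smooth form: this is a \emph{negative} current contribution, so $\Psi-\log\rho$ is no longer psh. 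A one-variable counterexample shows the phenomenon is real: take $\Psi(z,\tau)=|z-\tau|^2$ on $\bC\times\bC$ and $\rho(z)=|z|^2$; then $\int_{\bC}e^{-\Psi}\rho\,dV(z)=C_1+C_2|\tau|^2$ with $C_1,C_2>0$, and $-\log(C_1+C_2|\tau|^2)$ is strictly \emph{super}harmonic. So the non-effective twisting (the zeros along $G$) cannot be absorbed into the weight.

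The paper sidesteps this by not treating $|s_G|^2$ as part of a fixed density at all. Setting $L'=\frac{1}{1+\epsilon}L_\epsilon+\Delta_\epsilon=-K_Y+G$, the metric $e^{-\Phi'}=e^{-\frac{\Phi_\epsilon}{1+\epsilon}}|s_{(\Delta_\epsilon)_\bC}|^{-2}$ is a genuinely positively curved singular metric on $p'^*_1L'$, and $s_G$ is then reinterpreted as a \emph{holomorphic} section of the adjoint bundle $K_Y+L'=G$. The quantity $\int_Y|s_G|^2 e^{-\Phi'}$ is the $L^2$-norm of this section, and the convexity of $\bL$ becomes the positivity of the $L^2$-metric on the (rank-one, by $H^0(Y,G)\cong\bC$) direct image $p_{2*}\bigl(K_{Y\times\bD/\bD}+p'^*_1L'\bigr)$. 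That is exactly Berndtsson's theorem on positivity of direct images, in the form \cite[Theorem 11.1]{BBEGZ}; the holomorphicity of $s_G$ is used in an essential way there via the Hörmander-type $\bar\partial$-estimate, and the hypothesis $H^1(Y,K_Y+L')=0$ (supplied here by Kawamata--Viehweg, using $\lfloor\Delta_\epsilon\rfloor=0$) guarantees that the direct image is a line bundle to which the theorem applies. So the condition $\lfloor\Delta_\epsilon\rfloor=0$ is not only for integrability, as you suggest, but also for the vanishing step. To repair your argument you would have to replace the naive fiberwise Prékopa in step (3) by this direct-image formulation; the reduction/approximation machinery in your steps (1)--(2) can then be kept or simplified, since \cite[Theorem 11.1]{BBEGZ} already allows singular psh metrics.
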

\begin{proof}
This essentially follows from the convexity result from \cite[Theorem 11.1]{BBEGZ} which generalizes Berndtsson's convexity result about Ding energy in smooth Fano case \cite{Bern15} to a very general log case. To see this, by using \eqref{eq-KYG} we set $L'=\frac{1}{1+\epsilon} L_\epsilon+\Delta_\epsilon=-K_Y+G$ to be a line bundle on $Y$. Let $p'_1: Y\times\bC\rightarrow \bC$ be the natural projection. Then $e^{-\Phi'}:=e^{-\frac{\Phi_\epsilon}{1+\epsilon}}\frac{1}{|s_{(\Delta_\epsilon)_\bC}|^2}$ is a positively curved (singular) Hermitian metric on $p'^*_1 L'$. Because $K_Y+L'=G=\sum_j \lceil a_j\rceil E''_j$ is exceptional, we see that $H^0(K_Y+L')=\bC\cdot s_{G_\bC}\cong \bC$ and $\bL_{(Y, B_\epsilon)}(\vphi_\epsilon(s))$ is the Bergman kernel of $K_Y+L'$ with respect to the Hermitian metric $e^{-\Phi'}$. 
Moreover, we have 
$$H^1(Y, K_Y+L')=H^1(Y, K_Y+L_\epsilon+\Delta_\epsilon)=0$$ by the Kawamata-Viehweg vanishing theorem.
So all the conditions in \cite[Theorem 11.1]{BBEGZ} are satisfied and, as proved there, Berndtsson's convexity result implies that $\bL_{(Y, B_\epsilon)}(\vphi_\epsilon(s))$ is indeed convex with respect to $s=\log|t|^{-1}$.
\end{proof}

In the following discussion, let $W$ denote the space of $\bC^*$-invariant divisorial valuations on $Y_\bC=Y\times\bC$ such that $w(t)=1$, and $A_{Y_\bC}(w)$ is the log discrepancy of $w$ over $Y_\bC$.
The following theorem can be proved by the similar method as \cite[Theorem 3.1]{BBJ18}. However, since there is a non-effective twisting in our case, we will give the details of proof.
\begin{prop}\label{prop-LBexpan}
Fix $0\le \epsilon\ll 1$.
Let $\Phi_\epsilon=\{\vphi(s)\}_{s\in [0, +\infty)}$ be a psh ray in $\cE^1(Y, L_\epsilon)$ normalized such that $\sup(\vphi(s)-\psi_\epsilon)=0$. With the above notations,
we have the identity:
\begin{equation}\label{eq-LBexpan}
\frac{1}{V_\epsilon(1+\epsilon)}\cdot \lim_{s\rightarrow+\infty} \frac{\bL_{(Y, B_\epsilon)} (\vphi(s))}{s}=
\inf_{w\in W} \left(A_{Y_\bC}(w)-\frac{1}{1+\epsilon}w(\Phi_\epsilon)-w((\Delta_\epsilon)_\bC)+ w(G_\bC)\right)-1,
\end{equation}
where $(\Delta_\epsilon)_\bC=\Delta_\epsilon\times\bC$ and $G_\bC=G\times \bC$.
\end{prop}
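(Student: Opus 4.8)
The plan is to adapt the proof of \cite[Theorem 3.1]{BBJ18} (which rests on \cite{Berm15}); the only genuinely new point is to run the argument with the \emph{non-effective} twisting $B_\epsilon=\Delta_\epsilon-G$. Write $I_\epsilon(s):=\int_Y e^{-\vphi(s)/(1+\epsilon)}\frac{|s_G|^2}{|s_{\Delta_\epsilon}|^2}$, so that $\bL_{(Y,B_\epsilon)}(\vphi(s))=-V_\epsilon(1+\epsilon)\log I_\epsilon(s)$ and the left-hand side of \eqref{eq-LBexpan} is $\ell:=\lim_{s\to+\infty}s^{-1}\bigl(-\log I_\epsilon(s)\bigr)$. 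First I would observe that $\ell$ is well defined: by Lemma \ref{lem-conv} the function $s\mapsto-\log I_\epsilon(s)$ is convex, and since $\vphi(s)\le\psi_\epsilon$ it is bounded above and finite at $s=0$, so its difference quotients increase to $\ell$. Because $\lfloor\Delta_\epsilon\rfloor=0$, the measure $\frac{|s_G|^2}{|s_{\Delta_\epsilon}|^2}$ is, up to a bounded positive factor, of the form $\prod_j|z_j|^{2c_j}\,dV$ with integrable poles, $G$ being effective and $\mu$-exceptional; combining this with the fact that $\tfrac1{1+\epsilon}\vphi(s)$ is a metric on $-K_Y-B_\epsilon$, the asymptotics of $I_\epsilon(s)$ are governed by log discrepancies on the pair $(Y_\bC,(B_\epsilon)_\bC)$ twisted by $\tfrac1{1+\epsilon}\Phi_\epsilon$ --- which is exactly the right-hand side of \eqref{eq-LBexpan}, once one writes $A_{Y_\bC}(w)-w((\Delta_\epsilon)_\bC)+w(G_\bC)=A_{(Y_\bC,(B_\epsilon)_\bC)}(w)$.

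For the inequality ``$\le$'', fix $w\in W$ and choose a $\bC^*$-equivariant proper birational model $\rho:\mcZ\to Y_\bC$ on which $w=\ord_F$ for a prime divisor $F$ over $\mcZ_0$. Near the generic point of the center of $F$, and using that $\Phi_\epsilon$ has linear growth, $\vphi(s)$ acquires along the induced center a logarithmic pole of slope $s\,w(\Phi_\epsilon)$ up to a bounded term, while the twisting contributes $w((\Delta_\epsilon)_\bC)-w(G_\bC)$ to the local exponent. A local computation in adapted coordinates (as in \cite{Berm15} and \cite[Theorem 3.1]{BBJ18}) then yields $I_\epsilon(s)\ge c\,e^{-s\beta_w}$ with $\beta_w=A_{(Y_\bC,(B_\epsilon)_\bC)}(w)-\tfrac1{1+\epsilon}w(\Phi_\epsilon)-1$; here $\lfloor\Delta_\epsilon\rfloor=0$ makes the relevant local integral positive, and the effective $G$ only improves the estimate. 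Hence $\ell\le\beta_w$, and taking the infimum over $w\in W$ gives ``$\le$'' in \eqref{eq-LBexpan}.

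The reverse inequality ``$\ge$'' is the main obstacle: it requires an \emph{upper} bound $I_\epsilon(s)\le C_\delta\,e^{-s(\mathrm{RHS}-\delta)}$ for every $\delta>0$, and this is exactly where the smoothness of $Y$ (hence of $Y_\bC$) is indispensable. For $m\gg1$ I would pass to the normalized blow-up of $Y_\bC$ along $\cJ(m\Phi_\epsilon)$ (the test configuration $(\cY_{\epsilon,m},\cL_{\epsilon,m})$ of Section \ref{sec-pertTC}): on it $e^{-m\vphi(s)/(1+\epsilon)}$ has analytic singularities, and pulling back $e^{-m\vphi(s)/(1+\epsilon)}\frac{|s_G|^2}{|s_{\Delta_\epsilon}|^2}$ and applying a Fubini/change-of-variables estimate along the fibration over the $t$-disc (as in \cite[proof of Theorem 5.1]{BBEGZ}, which already handles the integrable $\Delta_\epsilon$-poles) gives $I_\epsilon(s)\le C_m\,e^{-s\gamma_m}$ with $\gamma_m=\min_F\bigl(A_{Y_\bC}(F)-\tfrac1{1+\epsilon}\tfrac1m\ord_F(\cJ(m\Phi_\epsilon))-\ord_F((\Delta_\epsilon)_\bC)+\ord_F(G_\bC)\bigr)-1$, the minimum over the exceptional divisors $F$, up to an $O(1/m)$ error. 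Letting $m\to+\infty$ and invoking Demailly's regularization \cite[Proposition 3.1]{Dem92} --- so that $\tfrac1m\ord_F(\cJ(m\Phi_\epsilon))\to w(\Phi_\epsilon)$ and the infimum over $W$ is recovered in the limit by such divisorial valuations --- yields $\ell\ge\inf_{w\in W}\bigl(A_{(Y_\bC,(B_\epsilon)_\bC)}(w)-\tfrac1{1+\epsilon}w(\Phi_\epsilon)\bigr)-1$, and together with the previous step this proves \eqref{eq-LBexpan}. Throughout, the genuine (if bookkeeping-heavy) point is to verify that the non-effective summand $-G$ enters every local integrability estimate additively and with the correct sign, so that the argument of \cite[Theorem 3.1]{BBJ18} --- written there for a quasi-positive (klt-current) twisting --- goes through for $B_\epsilon=\Delta_\epsilon-G$; this works because $G$ is effective and $\mu$-exceptional, so $A_{(Y_\bC,(B_\epsilon)_\bC)}$ is still the correct discrepancy function and $|s_G|^2$ is a bounded factor away from the measure-zero center of $G$.
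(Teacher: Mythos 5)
Your proposal correctly identifies the setup (convexity of $s\mapsto -\log I_\epsilon(s)$ via Lemma \ref{lem-conv}, the reformulation $A_{Y_\bC}(w)-w((\Delta_\epsilon)_\bC)+w(G_\bC)=A_{(Y_\bC,(B_\epsilon)_\bC)}(w)$, and the role of $G$ effective and $\mu$-exceptional), but there is a genuine gap in the ``$\ge$'' direction, and the proposal also misdescribes what \cite[Theorem 3.1]{BBJ18} actually does.

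The paper's proof does not estimate $I_\epsilon(s)$ directly. It reinterprets the desired slope as the Lelong number $\nu$ at $0\in\bD$ of the subharmonic function $t\mapsto -\log I_\epsilon(\log|t|^{-1})$, then invokes \cite[Proposition 3.8]{Berm15} to characterize $\nu$ as the integrability threshold of a single double integral over $Y\times U$, and finally applies \emph{both} directions of the valuative characterization of such thresholds from \cite[Theorem 5.5]{BFJ08}. The hard direction (``valuative bound $\Rightarrow$ finite integral'') moreover requires the uniform openness estimate $w(\tilde\Phi)+w((\Delta_\epsilon)_\bC)\le(1-\alpha)A(w)+C$ from \cite[Lemma 5.5]{BBJ18}, which your proposal never addresses, and which is needed to control the valuations near the infimum. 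Your sketch replaces all of this with a claimed pointwise-in-$s$ bound $I_\epsilon(s)\le C_m e^{-s\gamma_m}$ derived by blowing up $\cJ(m\Phi_\epsilon)$ and appealing to Demailly regularization. This step is where the argument breaks: Demailly's theorem produces a \emph{less} singular approximation, $\Phi\le\Phi_m+C_m$, so it yields $e^{-\vphi(s)/(1+\epsilon)}\ge c_m\,e^{-\vphi_m(s)/(1+\epsilon)}$, i.e., a \emph{lower} bound on $I_\epsilon(s)$ in terms of the analytic-singularities model. That is exactly the wrong direction for an upper bound on $I_\epsilon(s)$. (In terms of slopes: you get $\ell\le\ell^{(m)}$, and \eqref{eq-wJPhim} gives $\gamma_m\ge$ RHS, so the chain runs $\ell\le\ell^{(m)}$ and RHS $\le\gamma_m$, which never combine into $\ell\ge$ RHS.) The statement ``$e^{-m\vphi(s)/(1+\epsilon)}$ has analytic singularities on the blow-up'' is also not correct: blowing up $\cJ(m\Phi_\epsilon)$ principalizes the ideal but does not change the singularity type of $\Phi$ itself.

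In short: your ``$\le$'' direction is plausible (it is essentially the easy half of the BFJ criterion, obtained by integrating over a small neighborhood of a chosen center), and your observations about $B_\epsilon=\Delta_\epsilon-G$ entering the discrepancies additively are correct and match the paper's treatment. But the ``$\ge$'' half needs the full strength of \cite[Theorem 5.5]{BFJ08} together with the $L^1_{\rm loc}$ openness bound of \cite[Lemma 5.5]{BBJ18}; it cannot be obtained from Demailly regularization used in the naive ``less singular $\Rightarrow$ better bound'' way. You should also note that the blow-ups of $\cJ(m\Phi_\epsilon)$ are used in the \emph{next} proposition (to prove \eqref{eq-limLNAm}), not in the proof of \eqref{eq-LBexpan} itself.
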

\begin{proof}
For simplicity of notations, set $\tilde{\Phi}:=\frac{1}{1+\epsilon}\Phi_\epsilon$. Then $w(\tilde{\Phi})=\frac{1}{1+\epsilon}w(\Phi_\epsilon)$. 
Since the function
\begin{equation}
u_\epsilon(t):=\bL_{(Y, B_\epsilon)}(\vphi(\log|t|^{-1}))=-\log\left(\int_Y e^{-\tilde{\Phi}}\frac{|s_{G}|^2}{|s_{\Delta_\epsilon}|^2}\right)
\end{equation}
is subharmonic on $\bD$ by Lemma \ref{lem-conv}, its Lelong number $\nu$ at the origin coincides with the negative of the left-hand-side of \eqref{eq-LBexpan}.
We need to show that $\nu$ is equal to
\begin{equation}\label{eq-zeta}
\zeta:=\sup_{w\in W} \left(w(\tilde{\Phi})+w((\Delta_\epsilon)_\bC)-w(G_\bC)-A_{Y_\bC}(w)\right)+1.
\end{equation}
By \cite[Proposition 3.8]{Berm15}, $\nu$ is the infimum of all $c\ge 0$ such that:
\begin{equation}
\int_U e^{-(u_\epsilon(t)+(1-c)\log|t|^2)} \sqrt{-1}dt\wedge d\bar{t}=\int_{Y\times U}e^{-(\tilde{\Phi}+(1-c)\log|t|^2)}\frac{|s_{G_\bC}|^2}{|s_{(\Delta_\epsilon)_\bC}|^2} \sqrt{-1}dt\wedge d\bar{t}<+\infty.
\end{equation}
Set $p:=\lfloor c\rfloor$ and $r=c-p\in [0,1)$. Then we have
$$e^{-(\tilde{\Phi}+(1-c)\log|t|^2)}\frac{|s_{G_\bC}|^2}{|s_{(\Delta_\epsilon)_\bC}|^2}
\sqrt{-1}dt\wedge d\bar{t}=|t|^{2p}|s_{G_\bC}|^2 e^{-(\tilde{\Phi}+(1-r)\log|t|+\log|s_{(\Delta_\epsilon)_\bC}|^2)}dV.
$$
It follows from \cite[Theorem B.5]{BBJ18} (see also \cite[Theorem 5.5]{BFJ08}) that
\begin{eqnarray*}
&&\int_{Y\times U}e^{-(\tilde{\Phi}+(1-c)\log|t|^2)}\frac{|s_{G_\bC}|^2}{|s_{(\Delta_\epsilon)_\bC}|^2} \sqrt{-1}dt\wedge d\bar{t}<+\infty\\
&&\hskip 3cm \Longrightarrow\hskip 1cm \sup_{w\in W}\frac{w(\tilde{\Phi})+w((\Delta_\epsilon)_\bC)+(1-r)w(t)}{p w(t)+w(G_\bC)+A_{Y_\bC}(w)}\le 1,
\end{eqnarray*}
where $w$ ranges over all divisorial valuations on $Y_\bC$. By homogeneity and by the $S^1$-invariance of $\Phi$, it suffices to consider $w$ that are $\bC^*$-invariant and normalized by
$w(t)=1$. We then get:
\begin{equation}
w({\Phi})+1\le p+r+w(G_\bC)-w((\Delta_\epsilon)_\bC)+A_{Y_\bC}(w).
\end{equation}
So we get $\zeta \le \nu$.

Conversely, the valuative description of multiplier ideals from \cite[Theorem B.5]{BBJ18} (see also \cite[Theorem 5.5]{BFJ08}) shows that:
\begin{eqnarray}\label{eq-intsuff}
&&\sup_{w\in W} \frac{w(\tilde{\Phi})+w((\Delta_\epsilon)_\bC)+1-r}{p+w(G_\bC)+A_{Y_\bC}(w)}<1\\
&&\hskip 3cm \Longrightarrow \hskip 0.5cm \int_{Y\times U}e^{-(\Phi+(1-c)\log|t|)}\frac{|s_{G_\bC}|^2}{|s_{(\Delta_\epsilon)_\bC}|^2}\sqrt{-1}dt\wedge d\bar{t}<+\infty. \nonumber
\end{eqnarray}
To prove $\zeta\ge \nu$, it suffices to show that for any $\delta>0$ and $a\ge \zeta+\delta$, if we let $\lfloor a\rfloor=p$ and $r=a-p\in [0,1)$, then the following inequality holds:
\begin{eqnarray}\label{eq-sgnu}
\sup_{w\in W}\frac{w(\tilde{\Phi})+w((\Delta_\epsilon)_\bC+1-r}{p+w(G_\bC)+A_{Y_\bC}(w)}<1.
\end{eqnarray}
For any $w\in W$, by \eqref{eq-zeta} we have:
\begin{eqnarray*}
a=p+r \ge \delta+\zeta\ge \delta+w(\tilde{\Phi})+w((\Delta_\epsilon)_\bC)-w(G_\bC)-A_{Y_\bC}(w)+1
\end{eqnarray*}
or equivalently:
\begin{eqnarray*}
p+w(G_\bC)+A_{Y_\bC}(w)\ge  \delta+ w(\tilde{\Phi})+w((\Delta_\epsilon)_\bC)+1-r
\end{eqnarray*}
So we get:
\begin{eqnarray*}
\frac{w(\tilde{\Phi})+w((\Delta_\epsilon)_\bC)+1-r}{p+w(G_\bC)+A_{Y_\bC}(w)}\le 1-\frac{\delta}{p+w(G_\bC)+A_{Y_\bC}(w)}\le 1-\frac{\delta}{p+(1+\lct(G_\bC))A_{Y_\bC}(w)}.
\end{eqnarray*}
On the other hand, because locally $e^{-\tilde{\Phi}}\frac{1}{|s_{(\Delta_\epsilon)_\bC}|^2}\in L^1_{\rm loc}(Y\times\bD^*)$ for $\epsilon\ll 1$, by \cite[Lemma 5.5]{BBJ18}, there exist $\alpha\in (0,1)$ and $C>0$ such that $w(\tilde{\Phi})+w((\Delta_\epsilon)_\bC)\le (1-\alpha)A(w)+C$ for any $w\in W$. So we have
\begin{eqnarray*}
\frac{w(\tilde{\Phi})+w((\Delta_\epsilon)_\bC)+1-r}{p+w(G_\bC)+A_{Y_\bC}(w)}\le \frac{w(\tilde{\Phi})+w((\Delta_\epsilon)_\bC)+1}{A_{Y_\bC}(w)}\le 1-\alpha+\frac{1}{A_{Y_\bC}(w)}.
\end{eqnarray*}
Now it's easy to get the inequality \eqref{eq-sgnu}.
\end{proof}

In the following discussion, let $\Phi$ be the destabilising geodesic ray constructed in section \ref{sec-step1}. 
Then as in section \ref{sec-pertTC}, for any $\epsilon>0$ sufficiently small,
let $\Phi_\epsilon=\Phi+\epsilon \psi_P$ be a psh ray in $\mcE^1(Y, L_\epsilon)$, and $\Phi^\NA_\epsilon$ be the associated non-Archimedean metric (see Definition \ref{defn-PhiNA}). Let $(\mcY, \mcL_{\epsilon,m})$ be the test configuration of $(Y, L_\epsilon)$ constructed in \eqref{eq-defcLepm}-\eqref{eq-defphiepm}), and $\Phi^\NA_{\epsilon,m}\in \mcH^\NA(Y, L_\epsilon)$ be the associated non-Archimedean metric (see Definition \ref{defn-TCNA}).

Set
\begin{eqnarray}\label{eq-LBNA2}
\bL^\NA_{(Y, B_\epsilon)}(\Phi^\NA_\epsilon)&:=&V_\epsilon(1+\epsilon)\cdot \inf_{w\in W}\left(A_{Y_\bC}(w)-\frac{1}{1+\epsilon}w(\Phi_\epsilon)-w((\Delta_\epsilon)_\bC+w(G_\bC)\right)-1\\
&=&V_\epsilon(1+\epsilon)\cdot \inf_{v\in Y^{\rm div}_\bQ}\left(A_{Y_\bC}(v)+\frac{1}{1+\epsilon}\Phi^\NA_\epsilon(v)-v(B_\epsilon)\right).\nonumber
\end{eqnarray}
With Proposition \ref{prop-LBexpan}, the following result can be proved by the similar argument as in \cite{BBJ15}. Again since there is a non-effective twisting in our case, we give the details. We need the following inequalities: for any psh ray $\Phi_\epsilon$ on $L_\epsilon$ and $w\in W$,
\begin{equation}\label{eq-wJPhim}
 w(\cJ(m\Phi_\epsilon))\le  m\; w(\Phi_\epsilon)\le w(\cJ(m\Phi_\epsilon))+ A_{Y_\bC}(w).
\end{equation}
The first inequality holds because $\Phi_{m}$ is less singular than $\Phi$ by Demailly's regularization result.  The second inequality follows from the definition of multiplier ideal $\cJ(m\Phi_\epsilon)$. 
\begin{prop}[see \cite{BBJ15}]
We have the identity:
\begin{equation}\label{eq-limLNAm}
\lim_{m\rightarrow+\infty} \bL_{(Y, B_\epsilon)}^\NA(\Phi^\NA_{\epsilon,m})=\bL^\NA_{(Y, B_\epsilon)}(\Phi^\NA_\epsilon)=\lim_{s\rightarrow+\infty} \frac{\bL_{(Y, B_\epsilon)}(\vphi_\epsilon(s))}{s}.
\end{equation}
\end{prop}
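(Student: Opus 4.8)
The plan is to establish the two equalities in \eqref{eq-limLNAm} separately. The second one, $\bL^\NA_{(Y, B_\epsilon)}(\phi_\epsilon)=\lim_{s\rightarrow+\infty}\bL_{(Y, B_\epsilon)}(\vphi_\epsilon(s))/s$, is essentially definitional: since the destabilizing ray satisfies $\sup_X(\vphi(s)-\psi_0)=0$, the perturbed ray $\Phi_\epsilon=\Phi+\epsilon\psi_P$ satisfies $\sup_Y(\vphi_\epsilon(s)-\psi_\epsilon)=0$, so Proposition \ref{prop-LBexpan} applies to $\Phi_\epsilon$ and its right-hand side is precisely the expression defining $\bL^\NA_{(Y, B_\epsilon)}(\phi_\epsilon)$ in \eqref{eq-LBNA2}. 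The real content is therefore the first equality $\lim_{m\rightarrow+\infty}\bL^\NA_{(Y, B_\epsilon)}(\phi_{\epsilon,m})=\bL^\NA_{(Y, B_\epsilon)}(\phi_\epsilon)$.

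For the latter I would start from the valuative expression of $\bL^\NA_{(Y, B_\epsilon)}$ recalled in section \ref{sec-valuative} and in \eqref{eq-LBNA2},
\[
\bL^\NA_{(Y, B_\epsilon)}(\phi)=V_\epsilon(1+\epsilon)\cdot\inf_{v\in Y^{\rm div}_\bQ}\left(A_{(Y,B_\epsilon)}(v)+\tfrac{1}{1+\epsilon}\,\phi(v)\right),
\]
valid for any $\phi\in\mcH^\NA(Y,L_\epsilon)$; for $\phi=\phi_{\epsilon,m}$ this also follows from Proposition \ref{prop-LBexpan} applied to the psh ray $\Phi_{\epsilon,m}$ attached to $(\cY_{\epsilon,m},\cL_{\epsilon,m})$. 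In particular $\bL^\NA_{(Y, B_\epsilon)}$ is monotone non-decreasing in $\phi$, the non-effectivity of $B_\epsilon$ being irrelevant here since $v(B_\epsilon)$ does not involve $\phi$. By \eqref{eq-defphiepm} and Definition \ref{defn-PhiNA} we have $\phi_{\epsilon,m}(v)=-\tfrac1m G(v)(\cJ(m\Phi_\epsilon))$ and $\phi_\epsilon(v)=-G(v)(\Phi_\epsilon)$, so applying the two inequalities \eqref{eq-wJPhim} to $w=G(v)$ --- the first of which uses Demailly's regularization on the \emph{smooth} variety $Y$ and the second the defining property of the multiplier ideal --- gives the sandwich
\[
\phi_\epsilon(v)\le\phi_{\epsilon,m}(v)\le\phi_\epsilon(v)+\tfrac1m\,A_{Y_\bC}(G(v))\qquad\text{for every }v\in Y^{\rm div}_\bQ .
\]

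The left inequality together with monotonicity gives $\bL^\NA_{(Y, B_\epsilon)}(\phi_{\epsilon,m})\ge\bL^\NA_{(Y, B_\epsilon)}(\phi_\epsilon)$ for every $m$. For the reverse bound, fix $\eta>0$ and choose $v_0\in Y^{\rm div}_\bQ$ nearly realizing the infimum defining $\bL^\NA_{(Y, B_\epsilon)}(\phi_\epsilon)$; feeding the same $v_0$ into the infimum for $\phi_{\epsilon,m}$ and using the right inequality of the sandwich,
\[
\tfrac{1}{V_\epsilon(1+\epsilon)}\bL^\NA_{(Y, B_\epsilon)}(\phi_{\epsilon,m})\le\tfrac{1}{V_\epsilon(1+\epsilon)}\bL^\NA_{(Y, B_\epsilon)}(\phi_\epsilon)+\eta+\tfrac{1}{m(1+\epsilon)}\,A_{Y_\bC}(G(v_0)).
\]
Since $v_0$ is fixed once $\eta$ is chosen, letting $m\to\infty$ and then $\eta\to0$ yields $\limsup_{m}\bL^\NA_{(Y, B_\epsilon)}(\phi_{\epsilon,m})\le\bL^\NA_{(Y, B_\epsilon)}(\phi_\epsilon)$, which together with the previous inequality proves the first equality.

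I expect the main obstacle to be a bookkeeping one rather than an analytic one: one must be sure that the valuative formula for $\bL^\NA_{(Y, B_\epsilon)}$ is legitimate and that its defining infimum is finite (so that a near-optimal $v_0$ exists) even though $B_\epsilon$ is not effective --- which is exactly what Proposition \ref{prop-LBexpan} was set up to handle, via $\lfloor\Delta_\epsilon\rfloor=0$ and the klt-ness of $(Y,\Delta_\epsilon)$ --- and that the identification $\phi_{\epsilon,m}=\Phi^\NA_{\epsilon,m}$ coming from the test configuration is compatible with \eqref{eq-wJPhim}. Apart from this, the one genuinely nontrivial ingredient is Demailly's regularization, invoked only on the smooth model $Y$; this is precisely why the argument is carried out on the resolution rather than directly on $X$.
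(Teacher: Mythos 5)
Your proof is correct and essentially identical to the paper's own argument: both use the sandwich inequality \eqref{eq-wJPhim} (Demailly regularization on the smooth $Y$ for the lower bound, the definition of the multiplier ideal for the upper bound), derive the easy inequality from monotonicity, and obtain the reverse by plugging a near-optimal valuation into the infimum and letting $m\to\infty$. The only cosmetic difference is that you parametrize by $v\in Y^{\rm div}_\bQ$ via the Gauss extension $w=G(v)$, while the paper works directly with $\bC^*$-invariant $w\in W$ on $Y_\bC$; these are the same valuations, as noted in \eqref{eq-LBNA2}.
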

\begin{proof}
The second equality follows from Proposition \ref{prop-LBexpan}.
For simplicity of notations, denote $T'=\frac{1}{(1+\epsilon)V_\epsilon}\bL^\NA_{(Y, B_\epsilon)}(\Phi^\NA_\epsilon)$
and
\[
T_+:=\frac{1}{(1+\epsilon)V_\epsilon}\limsup_{m\rightarrow+\infty} \bL_{(Y, B_\epsilon)}^\NA(\Phi^\NA_{\epsilon,m})\quad \ge  \quad T_-:=\frac{1}{(1+\epsilon)V_\epsilon}\liminf_{m\rightarrow+\infty} \bL_{(Y, B_\epsilon)}^\NA(\Phi^\NA_{\epsilon,m}).
\]
Using \eqref{eq-wJPhim}, we get, for any $\bC^*$-invariant valuation $w$ on $Y_\bC$ with $w(t)=1$,
\begin{eqnarray*}
-\frac{1}{m}w(\cJ(m {\Phi}))&\ge& - w({\Phi}).
\end{eqnarray*}
Adding $A_{Y_\bC}(w)-w((\Delta_\epsilon)_\bC)+w(G_\bC)-1$ on both sides and taking infimum, we get $\frac{1}{(1+\epsilon)V_\epsilon}\bL^\NA_{(Y, B_\epsilon)}(\Phi^\NA_{\epsilon,m})\ge T'$ for any $m$ and hence $T_-\ge T'$. 

On the other hand, for any $\alpha>0$, there exists $w\in W$ such that
$$
A_{Y_\bC}(w)-1-w(\Phi_\epsilon)-w((\Delta_\epsilon)_\bC)+v(G_\bC) \le T'+\alpha.
$$
So we get the inequality:
\begin{eqnarray*}
-\frac{1}{m}w(\cJ(m\Phi_\epsilon))&\le&  \left(-w(\Phi_\epsilon)+\frac{1}{m}A_{Y_\bC}(w)\right)\\
&\le& T'+\alpha-A_{Y_\bC}(w)+1+w((\Delta_\epsilon))-v(G_\bC)+\frac{1}{m}A_{Y_\bC}(w).
\end{eqnarray*}
Taking $\limsup$ as $m\rightarrow+\infty$, we get $T_+\le T'+\alpha$. Since $\alpha>0$ is arbitrary, we get $T_+\le T'$ and hence $T_+=T_-=T'$ as wanted.
\end{proof}

The following key proposition proves the convergence in \eqref{eq-sklimLep}. 
\begin{prop}\label{prop-limLep}
With the above notations, the following convergence holds true:
\begin{equation}\label{eq-limLNAep}
\lim_{\epsilon\rightarrow 0}\bL_{(Y, B_\epsilon)}^\NA(\Phi^\NA_\epsilon)=\bL^\NA(\Phi^\NA).
\end{equation}
\end{prop}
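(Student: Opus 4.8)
The plan is to run a continuity-in-$\epsilon$ argument directly on the valuative formula for $\bL^\NA_{(Y,B_\epsilon)}(\phi_\epsilon)$ provided by Proposition~\ref{prop-LBexpan} and \eqref{eq-limLNAm}, the point being that a uniform coercivity estimate confines the defining infimum to valuations of bounded log discrepancy, where the $\epsilon$-dependence is manifestly continuous. First I would record the shape of both sides. Combining Proposition~\ref{prop-LBexpan} with \eqref{eq-limLNAm} one may write
\[
\bL^\NA_{(Y,B_\epsilon)}(\phi_\epsilon)=V_\epsilon(1+\epsilon)\Big(\inf_{w\in W}h_\epsilon(w)-1\Big),\qquad h_\epsilon(w):=A_{Y_\bC}(w)-\tfrac{1}{1+\epsilon}w(\Phi_\epsilon)-w((\Delta_\epsilon)_\bC)+w(G_\bC).
\]
Since $\psi_P$ is smooth, $\Phi_\epsilon$ differs from $\bar\mu^*\Phi$ by $\epsilon$ times a smooth function, so $w(\Phi_\epsilon)=w(\bar\mu^*\Phi)$, which equals $w(\Phi)$ because $\bar\mu$ is birational; as $\Delta_\epsilon-\Delta_0=\tfrac{\epsilon}{1+\epsilon}E_\theta$, this yields
\[
h_\epsilon(w)=h_0(w)+\tfrac{\epsilon}{1+\epsilon}\big(w(\Phi)-w((E_\theta)_\bC)\big).
\]
For $\epsilon=0$ the identity $-K_Y=\mu^*(-K_X)+B_0$ means $B_0=-K_{Y/X}$, so the transformation rule for log discrepancies gives $A_{(Y,B_0)}=A_X$ and $V_0=V$, whence $\bL^\NA_{(Y,B_0)}(\phi_0)=\bL^\NA(\phi)$ (and $\phi_0=\phi$ as functions on $Y^{\rm div}_\bQ=X^{\rm div}_\bQ$, again because $\bar\mu$ is birational). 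Since $V_\epsilon(1+\epsilon)\to V$ by continuity of intersection numbers and $(\mu^*L)^{\cdot n}=L^{\cdot n}$, it remains to prove $\inf_{w\in W}h_\epsilon(w)\to \inf_{w\in W}h_0(w)=:c_0$.

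The inequality $\limsup_{\epsilon\to0}\inf_W h_\epsilon\le c_0$ is immediate: for each fixed $w_*$ one has $\inf_W h_\epsilon\le h_\epsilon(w_*)\to h_0(w_*)$, and one takes the infimum over $w_*$. For the reverse inequality I would use the integrability estimate already invoked in the proof of Proposition~\ref{prop-LBexpan} (\cite[Lemma~5.5]{BBJ18}) at $\epsilon=0$: there exist $\alpha_0\in(0,1)$ and $C_0>0$ with $w(\Phi)+w((\Delta_0)_\bC)\le(1-\alpha_0)A_{Y_\bC}(w)+C_0$ for all $w\in W$, hence $h_0(w)\ge\alpha_0 A_{Y_\bC}(w)-C_0$ (using $w(G_\bC)\ge0$); in particular $c_0$ is finite (bounded below by $-C_0$ and above by $h_0$ at the Gauss extension of the trivial valuation, which is $1$). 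Moreover $w((E_\theta)_\bC)=\sum_k\theta_k w(E_{k,\bC})\le\beta\,A_{Y_\bC}(w)$ with $\beta=\sum_k\theta_k$, because each $E_{k,\bC}$ is a smooth prime divisor on the smooth $Y_\bC$ and so has log canonical threshold $1$. Using $w(\Phi)\ge0$ and then $h_0(w)\ge\alpha_0 A_{Y_\bC}(w)-C_0$,
\[
h_\epsilon(w)\ \ge\ h_0(w)-\tfrac{\epsilon}{1+\epsilon}\beta\,A_{Y_\bC}(w)\ \ge\ \tfrac{\alpha_0}{2}A_{Y_\bC}(w)-C_0\qquad\big(\tfrac{\epsilon}{1+\epsilon}\beta\le\tfrac{\alpha_0}{2}\big).
\]
Fixing $M:=2(c_0+1+C_0)/\alpha_0$, the last display forces $h_\epsilon(w)\ge c_0+1$ whenever $A_{Y_\bC}(w)\ge M$, while for $A_{Y_\bC}(w)\le M$ we get $h_\epsilon(w)\ge h_0(w)-\tfrac{\epsilon}{1+\epsilon}\beta M\ge c_0-\tfrac{\epsilon}{1+\epsilon}\beta M$. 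Hence $\inf_W h_\epsilon\ge c_0-\tfrac{\epsilon}{1+\epsilon}\beta M\to c_0$, i.e.\ $\liminf_{\epsilon\to0}\inf_W h_\epsilon\ge c_0$. Combining the two bounds and multiplying by $V_\epsilon(1+\epsilon)$ gives \eqref{eq-limLNAep}.

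The main obstacle is exactly the $\liminf$ direction: the perturbation term $\tfrac{\epsilon}{1+\epsilon}w((E_\theta)_\bC)$ is unbounded over $W$, so one must rule out an ``escape of the infimum to valuations of unbounded log discrepancy,'' which is precisely what the coercivity estimate of \cite[Lemma~5.5]{BBJ18} (applied at $\epsilon=0$) prevents. A secondary point to check is that, if one prefers not to reduce to $\epsilon=0$, the same estimate holds with constants uniform in $\epsilon$ for $\epsilon$ small; this is routine since, as in Proposition~\ref{prop-LBexpan}, the relevant integrand $e^{-\tfrac{1}{1+\epsilon}\Phi_\epsilon}|s_{(\Delta_\epsilon)_\bC}|^{-2}$ is dominated on $Y\times\bD^*$ independently of $\epsilon$. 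Finally, one should be a little careful with the compatibility of generic Lelong numbers and non-Archimedean metrics under $\bar\mu$, which underlies both $w(\Phi_\epsilon)=w(\Phi)$ and the identification $\bL^\NA_{(Y,B_0)}(\phi_0)=\bL^\NA(\phi)$.
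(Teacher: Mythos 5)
Your proposal is correct and follows essentially the same strategy as the paper's own proof: rewrite both sides valuatively, observe that $h_\epsilon(w)=h_0(w)+\tfrac{\epsilon}{1+\epsilon}\bigl(w(\Phi)-w((E_\theta)_\bC)\bigr)$, get the $\limsup$ direction by testing against fixed valuations, and for the $\liminf$ direction use a coercivity estimate to confine the competing infimum to valuations of bounded log discrepancy, where the perturbation is $O(\epsilon)$ uniformly. The paper reaches the coercivity bound $A_{Y_\bC}(w)\le C'/\tau$ by folding everything into a single openness/integrability estimate of the form $w((\Delta_0)_\bC)+w(\Phi)+w((E_\theta)_\bC)\le(1-\tau)A_{Y_\bC}(w)+C_1$ coming from \cite[Lemma~5.5]{BBJ18}, and then compares $F_\epsilon$ and $F_0$ on the resulting bounded set $W'$; you instead peel off the term $w((E_\theta)_\bC)\le\beta\,A_{Y_\bC}(w)$ by the elementary observation that each $E_{k,\bC}$ is a smooth prime divisor on the smooth $Y_\bC$ (so $\lct=1$), which is a marginally cleaner way to control that piece but does not change the structure. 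Both arguments invoke $w(\Phi)\ge 0$ (which holds because $\Phi$ is normalized with $\sup(\vphi(s)-\psi_0)=0$) in the $\liminf$ direction, and both reduce the $\epsilon=0$ data on $Y$ back to $X$ via the transformation rule for log discrepancies and birational invariance of $w(\Phi)$. So your proof is sound; the only caveat is to make the normalization by $V_\epsilon(1+\epsilon)$ explicit (the paper drops it in the proof though it appears in \eqref{eq-LBNA2}), and to state the argument for the general log Fano pair $(X,D)$ rather than $D=0$, replacing $A_X$ with $A_{(X_\bC,D_\bC)}$ — a purely cosmetic change.
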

\begin{proof}
Since $w(\Phi+\epsilon \psi_P)=w(\Phi)$, by Proposition \ref{prop-LBexpan}, we have:
\begin{eqnarray*}
\bL^\NA_{(Y, B_\epsilon)}(\Phi^\NA_\epsilon)&=& \inf_{w\in W} \left(A_{Y_\bC}(w)-\frac{1}{1+\epsilon}w(\Phi)-w((\Delta_\epsilon)_\bC)+ w(G_\bC)\right)-1.
\end{eqnarray*}
On the other hand, recall that (see \eqref{eq-LBNA2})
\begin{eqnarray*}
\bL^\NA(\phi)=\inf_{w\in W}\left(A_{(X_\bC, D_\bC)}(w)-w(\Phi)\right)-1.
\end{eqnarray*}

Note that since $A_{(X_\bC, D_\bC)}(w)=A_{Y_\bC}(w)+w(K_{Y_\bC/(X_\bC, D_\bC)})$, we have the following identities:
\begin{eqnarray*}
&&A_{Y_\bC}(w) -\frac{1}{1+\epsilon}w(\Phi)-w((\Delta_\epsilon)_\bC)+w(G_\bC)\\
&=&A_{Y_\bC}(w)-w((\Delta_0)_\bC)+w(G_\bC)-\frac{1}{1+\epsilon}w(\Phi)-\frac{\epsilon}{1+\epsilon} w((E_\theta)_\bC) \\
&=&A_{(X_\bC, D_\bC)}(w)-w(\Phi)+\frac{\epsilon}{1+\epsilon}w(\Phi)-\frac{\epsilon}{1+\epsilon} w((E_\theta)_\bC).
\end{eqnarray*}
This holds for any $\epsilon\ge 0$.
For the simplicity of notations, denote the above equivalent quantity by
\begin{equation}
F_\epsilon(w):=A_{Y_\bC}(w)-w((\Delta_0)_\bC)+w(G_\bC)-\frac{1}{1+\epsilon}w(\Phi)-\frac{\epsilon}{1+\epsilon} w((E_\theta)_\bC).
\end{equation}
Then, by definition, for any $\epsilon\ge 0$, $\bL^\NA_\epsilon(\vphi_\epsilon)= \bI_\epsilon-1$ where 
\begin{eqnarray*}
\bI_{\epsilon}&:=&\inf_{w\in W} F_\epsilon(w). 
\end{eqnarray*}
So we need to prove $\lim_{\epsilon\rightarrow 0}\bI_\epsilon=\bI_0$.

We first show that $\bI_\epsilon$ is uniformly bounded for any $\epsilon\in [0,1)$. To see this note that:
\begin{eqnarray*}
&&A_{Y_\bC}(w)-w((\Delta_0)_\bC)-w(\Phi)-w((E_\theta)_\bC)\\
&&\hskip 2cm \le F_\epsilon(w)=A_{Y_\bC}(w)-w((\Delta_0)_\bC)+w(G_\bC)-\frac{1}{1+\epsilon}w(\Phi)-\frac{\epsilon}{1+\epsilon} w((E_\theta)_\bC)\\
&&\hskip 5cm \le A_{Y_\bC}(w)-w((\Delta_0)_\bC)+w(G_\bC)-\frac{1}{2}w(\Phi).
\end{eqnarray*}
We can assume $\theta_i\ll 1$ such that $e^{-\Phi}\frac{1}{|s_{{\Delta_0}}\cdot s_{E_\theta}|^2}\in L^1_{\rm loc}(Y\times \bD^*)$. By \cite[Lemma 5.5]{BBJ18}, there exist $\tau\in (0,1)$ and a constant $C_1>0$ such that:
\begin{eqnarray}\label{eq-openness}
w((\Delta_0)_\bC)+w(\Phi)+w((E_\theta)_\bC))\le (1-\tau) A(w)+C_1 \quad \text{ for all } w\in W
\end{eqnarray}
So we easily get that there exists a constant $C>0\in \bR$ independent of $\epsilon\in [0,1)$ such that
\begin{eqnarray*}
|\bI_\epsilon|\le C.
\end{eqnarray*}
So we get $\bI_\epsilon=\inf_{F_\epsilon(w)\le C+1} F_\epsilon(w)$. 
Pick any $w\in W$ such that
\begin{eqnarray*}
F_\epsilon(w)\le A_{Y_\bC}(w)-w((\Delta_0)_\bC)+w(G_\bC)-\frac{1}{1+\epsilon}w(\Phi)-\frac{\epsilon}{1+\epsilon} w((E_\theta)_\bC)<C+1.
\end{eqnarray*}
We can estimate, by using \eqref{eq-openness}, that
\begin{eqnarray*}
A_{Y_\bC}(w)&\le& C+1+w((\Delta_0)_\bC)-w(G_\bC)+\frac{1}{1+\epsilon} w(\Phi)+\frac{\epsilon}{1+\epsilon} w((E_\theta)_\bC)\\
&\le&C+1+w((\Delta_0)_\bC)+w(\Phi)+w((E_\theta)_\bC)\\
&\le&C+1+C_1+(1-\tau)A(w).
\end{eqnarray*}
This implies $A_{Y_\bC}(w)\le \frac{C'}{\tau}$ with $C'=C+1+C_1$. If we denote $W'=\{w\in W; A_{Y_\bC}(w)\le \frac{C'}{\tau}\}$, then we get:
\begin{equation}\label{eq-bIW'}
\bI_\epsilon=\inf_{w\in W'}F_\epsilon(w).
\end{equation}
For any $w\in W'$, we have, by using \eqref{eq-openness} again, that:
\begin{eqnarray*}
F_\epsilon(w) &=& A_{Y_\bC}(w)-w((\Delta_0)_\bC)+w(G_\bC)-w(\Phi)+\frac{\epsilon}{1+\epsilon}w(\Phi)-\frac{\epsilon}{1+\epsilon} w((E_\theta)_\bC)\\
&\le& F_0(w)+\frac{\epsilon}{1+\epsilon}w(\Phi) \le F_0(w)+\frac{\epsilon}{1+\epsilon}\left((1-\tau)A(w)+C_1\right)\\
&\le& F_0(w)+\frac{\epsilon}{1+\epsilon}\left((1-\tau)\frac{C'}{\tau}+C_1\right).
\end{eqnarray*}
and also:
\begin{eqnarray*}
F_\epsilon(w)&=&A_{Y_\bC}(w)-w((\Delta_0)_\bC)+w(G_\bC)-w(\Phi)+\frac{\epsilon}{1+\epsilon}w(\Phi)-\frac{\epsilon}{1+\epsilon} w((E_\theta)_\bC)\\
&\ge& A_{Y_\bC}(w)-w((\Delta_0)_\bC)+w(G_\bC)-w(\Phi)-\frac{\epsilon}{1+\epsilon}w((E_\theta)_\bC)\\
&\ge& F_0(w)-\frac{\epsilon}{1+\epsilon}((1-\tau)A(w)+C_1)\\
&\ge& F_0(w)-\frac{\epsilon}{1+\epsilon}((1-\tau)\frac{C'}{\tau}+C_1).
\end{eqnarray*}
Letting $C''=(1-\tau)\frac{C'}{\tau}+C_1$ and taking infimum, we get:
\begin{eqnarray*}
\inf_{w\in W'}F_\epsilon(w)-C''\frac{\epsilon}{1+\epsilon} \le \inf_{w\in W'}F_0(w)\le \inf_{w\in W'}F_\epsilon(w)+\frac{\epsilon}{1+\epsilon}C''.
\end{eqnarray*}
Now \eqref{eq-limLNAep} follows by letting $\epsilon\rightarrow 0$ and using \eqref{eq-bIW'}.

\end{proof}

\subsection{Step 4: Uniform Ding-stability of $(Y, B_\epsilon)$}\label{sec-uniDing}
Recall that we have the following identity from \eqref{eq-KYG}
\begin{equation}
-(K_Y+B_\epsilon)=\frac{1}{1+\epsilon}(\mu^*(-K_X-D)+\epsilon P)=\frac{1}{1+\epsilon}L_\epsilon. 
\end{equation}
where
$$B_\epsilon=D'+\sum_k b_k E_k+\frac{\epsilon}{1+\epsilon}\sum_k \theta_k E_k=:B_0+\frac{\epsilon}{1+\epsilon}E_\theta.$$
The following result is analogous to \cite[Proposition 3.1]{LTW17}, which is based on the valuative criterion from Theorem \ref{thm-MvsD}. 
\begin{prop}\label{prop-DepNAproper}
With the above notations, assume that $(X, -K_X-D)$ is uniformly Ding stable with $\delta(X, D)=\delta_0>1$. Then
there exists a constant $C>0$ and a constant $\epsilon_*>0$ such that 
for any $0<\epsilon\ll \epsilon_*$, we have the following identity on $\cH^{\NA}(Y, L_\epsilon)$:
\begin{eqnarray*}
\bfD_{(Y, B_\epsilon)}^\NA=-\bfE_{L_\epsilon}^\NA+\bL_{(Y, B_\epsilon)}^\NA\ge  \left(1-\left((1-C\epsilon)\delta_{0}\right)^{-1/n}\right) \bfJ_{L_\epsilon}^\NA.
\end{eqnarray*}
\end{prop}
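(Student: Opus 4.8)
The plan is to deduce the statement from the valuative criterion of Theorem~\ref{thm-MvsD}.2. The identity $\bfD^\NA_{(Y,B_\epsilon)}=-\bfE^\NA_{L_\epsilon}+\bL^\NA_{(Y,B_\epsilon)}$ is just the non-Archimedean form of \eqref{eq-bfDTC}, so the content is the inequality. Since $Y$ is smooth and, for $\epsilon$ small, $(Y,B_\epsilon)$ is a sub-Klt pair (its coefficients with respect to the snc divisor $\mu^{-1}_*D+\sum_kE_k$ are all $<1$: the coefficient of a component of $\mu^{-1}_*D$ lies in $[0,1)$ and the coefficient of $E_k$ is $-a_k+\tfrac{\epsilon}{1+\epsilon}\theta_k<1$ by klt-ness of $(X,D)$ once $\epsilon$ is small), Theorem~\ref{thm-MvsD}.2 applies with polarization $L_\epsilon=\tfrac1\lambda(-K_Y-B_\epsilon)$, $\lambda=\tfrac1{1+\epsilon}$ (using Remark~\ref{rem-lambda}), and gives $\bfD^\NA_{(Y,B_\epsilon)}\ge(1-\delta(Y,B_\epsilon)^{-1/n})\bfJ^\NA_{L_\epsilon}$ on $\cH^{\NA}(Y,L_\epsilon)$ as soon as $\delta(Y,B_\epsilon)>1$. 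Thus it suffices to prove $\delta(Y,B_\epsilon)\ge(1-C\epsilon)\delta_0$ for a uniform constant $C$ and all $0<\epsilon<\epsilon_*$, and then use that $t\mapsto 1-t^{-1/n}$ is increasing. Note the infimum defining $\delta$ runs over divisorial valuations on $\bC(Y)=\bC(X)$ in both cases.

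The first two ingredients are formal consequences of \eqref{eq-KYG}. On the one hand $B_\epsilon=B_0+\tfrac{\epsilon}{1+\epsilon}E_\theta$ with $K_Y+B_0=\mu^*(K_X+D)$, so $B_0$ is the log pull-back of $D$, whence $A_{(Y,B_0)}(v)=A_{(X,D)}(v)$ and therefore $A_{(Y,B_\epsilon)}(v)=A_{(X,D)}(v)-\tfrac{\epsilon}{1+\epsilon}v(E_\theta)$ for every divisorial $v$. On the other hand $-(K_Y+B_\epsilon)=\tfrac1{1+\epsilon}L_\epsilon$ with $L_\epsilon=\mu^*L+\epsilon P$, $\mu^*L-P=E_\theta\ge 0$; since $\mu^*L$ is nef and big and $P$ is ample, monotonicity of the volume function (under adding an effective, resp. nef, class) gives $\vol(\mu^*L-xv)\le\vol(L_\epsilon-xv)\le\vol((1+\epsilon)\mu^*L-xv)$ for all $x\ge 0$, together with $L^n\le L_\epsilon^n$. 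Integrating, substituting $x=(1+\epsilon)y$ in the upper bound, and using the scaling $S_{cM}(v)=c\,S_M(v)$, we obtain $S_{-K_Y-B_\epsilon}(v)=\tfrac1{1+\epsilon}S_{L_\epsilon}(v)\le(1+\epsilon)^n S_{-K_X-D}(v)$.

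The key auxiliary estimate is: there is a constant $c_\theta>0$, independent of $v$, with $v(E_\theta)\le c_\theta\, A_{(X,D)}(v)$ for all divisorial $v$ over $X$. This is where klt-ness of $(X,D)$ and the choice of resolution \eqref{eq-Pep} are used. Fix $m\gg 1$ with $mE_\theta$ integral; then $\mathfrak{b}_m:=\mu_*\mathcal{O}_Y(-mE_\theta)\subseteq\mathcal{O}_X$ is a nonzero ideal sheaf, nonzero because $\mu_*\mathcal{O}_Y(-mE_\theta)\otimes\mathcal{O}_X(mL)=\mu_*\mathcal{O}_Y(mP)\neq 0$ as $P$ is ample. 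Set $c_\theta^{-1}:=m\cdot\lct_{(X,D)}(\mathfrak{b}_m)>0$. For any divisorial $v$ we have $\mathfrak{b}_m\cdot\mathcal{O}_Y\subseteq\mathcal{O}_Y(-mE_\theta)$, so $v(\mathfrak{b}_m)\ge m\,v(E_\theta)$, and hence $A_{(X,D)}(v)\ge\lct_{(X,D)}(\mathfrak{b}_m)\,v(\mathfrak{b}_m)\ge\lct_{(X,D)}(\mathfrak{b}_m)\,m\,v(E_\theta)=c_\theta^{-1}v(E_\theta)$. (Alternatively, one could invoke a homogeneous version of the uniform bound used in \eqref{eq-openness}.) Combining the three estimates, for $\epsilon$ so small that $1-c_\theta\tfrac{\epsilon}{1+\epsilon}>0$,
\[
\frac{A_{(Y,B_\epsilon)}(v)}{S_{-K_Y-B_\epsilon}(v)}\ \ge\ \frac{\bigl(1-c_\theta\tfrac{\epsilon}{1+\epsilon}\bigr)A_{(X,D)}(v)}{(1+\epsilon)^n\,S_{-K_X-D}(v)}\ \ge\ \frac{1-c_\theta\epsilon}{(1+\epsilon)^n}\,\delta_0\ \ge\ (1-C\epsilon)\,\delta_0,
\]
where $C:=c_\theta+n$ and we used $(1+\epsilon)^{-n}\ge 1-n\epsilon$. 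Taking the infimum over $v$ gives $\delta(Y,B_\epsilon)\ge(1-C\epsilon)\delta_0$. Since $\delta_0>1$, choose $\epsilon_*>0$ so that $(1-C\epsilon_*)\delta_0>1$ and $(Y,B_\epsilon)$ is sub-Klt for $\epsilon<\epsilon_*$; then Theorem~\ref{thm-MvsD}.2 yields $\bfD^\NA_{(Y,B_\epsilon)}\ge\bigl(1-\delta(Y,B_\epsilon)^{-1/n}\bigr)\bfJ^\NA_{L_\epsilon}\ge\bigl(1-((1-C\epsilon)\delta_0)^{-1/n}\bigr)\bfJ^\NA_{L_\epsilon}$ on $\cH^{\NA}(Y,L_\epsilon)$.

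The main obstacle is the auxiliary estimate $v(E_\theta)\le c_\theta A_{(X,D)}(v)$; everything else is bookkeeping with log discrepancies and an elementary volume comparison. One should also be a little careful to check that $(Y,B_\epsilon)$ genuinely satisfies the sub-Klt hypothesis of Theorem~\ref{thm-MvsD}.2 and that the two stability thresholds are computed over the same set of divisorial valuations on the common function field.
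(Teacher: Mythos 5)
Your proposal is correct and follows essentially the same strategy as the paper: reduce, via Theorem~\ref{thm-MvsD}.2 (and Remark~\ref{rem-lambda} for the choice $\lambda=\tfrac{1}{1+\epsilon}$), to the estimate $\delta(Y,B_\epsilon)\ge(1-C\epsilon)\delta_0$, and prove the latter by comparing $A_{(Y,B_\epsilon)}$ and $S_{-K_Y-B_\epsilon}$ with $A_{(X,D)}$ and $S_{-K_X-D}$.

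The only genuine variation is in the two technical bounds. For the log-discrepancy comparison, the paper works directly on $Y$: writing $B_0=\Delta_0-G$ with $\lfloor\Delta_0\rfloor=0$ and $G\ge 0$, it bounds $\ord_E(E_\theta)/A_{(Y,B_0)}(E)\le\ord_E(E_\theta)/A_{(Y,\Delta_0)}(E)\le\lct(Y,\Delta_0;E_\theta)^{-1}$, which is finite because $(Y,\Delta_0)$ is a klt snc pair. You instead push the estimate down to $X$ by introducing the ideal $\mathfrak{b}_m=\mu_*\mathcal{O}_Y(-mE_\theta)$ and invoking $\lct_{(X,D)}(\mathfrak{b}_m)>0$; this is equally valid and amounts to the same finiteness of a log canonical threshold, only phrased on $X$ rather than on the resolution. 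For the $S$-comparison, the paper splits the ratio $\Theta(\epsilon)/\Theta(0)$ into $R_1R_2R_3$ and observes $R_2\ge 1$ (monotonicity of $\vol$ under removing the effective $\tfrac{\epsilon}{1+\epsilon}E_\theta$) and $R_3\to 1$, while you bound $S_{-K_Y-B_\epsilon}(v)\le(1+\epsilon)^nS_{-K_X-D}(v)$ directly via the sandwich $\mu^*L\preceq L_\epsilon\preceq(1+\epsilon)\mu^*L$ and a change of variables. Your version is a bit more explicit in the rate; the paper's is a bit shorter. Both are correct and give the required uniform constant $C$. Your care in checking that $(Y,B_\epsilon)$ is sub-klt for small $\epsilon$ and that both stability thresholds range over the same set of divisorial valuations over $\mathbb{C}(X)=\mathbb{C}(Y)$ is appropriate and matches the paper's implicit use of those facts.
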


\begin{proof}
By Theorem \ref{thm-MvsD}, we just need to show that $\delta(Y, B_\epsilon)\ge (1-C\epsilon)\delta_0$.
Consider the quantity:
\begin{equation}
\Theta(\epsilon):=\frac{A_{(Y, B_\epsilon)}(E)(-K_Y-B_\epsilon)^n}{\int_0^\infty \vol_Y(-K_Y-B_\epsilon-xE)dx}.
\end{equation}
Then $\delta(Y, B_\epsilon):=\inf_E \Theta(\epsilon)$. Moreover, by the definition of $\delta(X, D)$ in \eqref{eq-deltaXY}, we have:
\begin{eqnarray*}
\Theta(0)=\frac{A_{(Y, B_0)}(E)(-K_Y-B_0)^n}{\int_0^\infty \vol_Y(-K_Y-B_0-xE)}dx=\frac{A_X(E)(-K_X-D)^n}{\int_0^\infty\vol(-K_X-D-x E)dx}\ge \delta(X,D)=\delta_{0}.
\end{eqnarray*}
So it is enough to prove that $\Theta(\epsilon)\ge (1-C\epsilon)\Theta(0)$. Consider the ratio:

\begin{eqnarray*}
{\bf R}(\epsilon):=\frac{\Theta(\epsilon)}{\Theta(0)}&=&\frac{A_{(Y, B_\epsilon)}(E)}{A_{(Y, B_0)}(E)}\cdot \frac{\int_0^{+\infty}\vol_Y(-K_Y-B_0-xE)dx}{\int_0^\infty \vol_Y(-K_Y-B_\epsilon-xE)dx}\cdot \frac{(-K_Y-B_\epsilon)^n}{(-K_Y-B_0)^n}\\
&=&R_1\cdot R_2\cdot R_3.
\end{eqnarray*}
The second ratio $R_2\ge 1$ because $-B_\epsilon=-B_0-\frac{\epsilon}{1+\epsilon} E_\theta\le -B_0$ and volume function is increasing along effective divisors. The factor $R_3$, which does not depend on $E$, clearly goes to $1$ as $\epsilon\rightarrow 0$. To estimate $R_1$, we use the decomposition $B_0=\Delta_0-G$ with $\lfloor \Delta_0\rfloor=0$ (see \eqref{eq-KYG}) and estimate as follows:
\begin{eqnarray*}
R_1&=&\frac{A_{(Y, B_\epsilon)}(E)}{A_{(Y, B_0)}(E)}=\frac{A_{(Y, B_0)}(E)-\frac{\epsilon}{1+\epsilon}\ord_E(E_\theta)}{A_{(Y, B_0)}(E)}\\
&=&1-\frac{\epsilon}{1+\epsilon}\frac{\ord_E(E_\theta)}{A_Y(E)-\ord_E(\Delta_0)+\ord_E(G)}\\
&\ge& 1-\frac{\epsilon}{1+\epsilon}\frac{\ord_E(E_\theta)}{A_Y(E)-\ord_E(\Delta_0)}\\
&\ge& 1-\frac{\epsilon}{1+\epsilon}(\lct(Y, \Delta_0; E_\theta))^{-1}.
\end{eqnarray*}
So ${\bf R}(\epsilon)\ge 1-C\epsilon$ for some $C>0$ independent of $E$. This concludes the proof.
\end{proof}

\subsection{Step 5: Completion of the proof}\label{sec-step5}

With the above notations and preparations, we can complete the proof of our main result. On the one hand,
by \eqref{eq-EPhi*}-\eqref{eq-Dvphiupb}, 
\begin{equation}\label{eq-bLsmall}
\bL^\NA(\Phi^\NA)=\lim_{s\rightarrow+\infty} \frac{\bL(\vphi(s))}{s}
=
\lim_{s\rightarrow+\infty} \frac{\bfD(\vphi(s))}{s}+\lim_{s\rightarrow+\infty}\frac{\bfE(\vphi(s))}{s}\le \gamma-1.
\end{equation}

On the other hand, by Proposition \ref{prop-DepNAproper}, we have, with $\tilde{\delta}_\epsilon=1-((1-C\epsilon)\delta_0)^{-1/n}$,
\begin{eqnarray*}
\bL_{(Y, B_\epsilon)}^\NA(\Phi^\NA_{\epsilon,m})&=&\bfD^\NA_{(Y, B_\epsilon)}(\Phi^\NA_{\epsilon,m})+\bfE_{L_\epsilon}^\NA(\Phi^\NA_{\epsilon,m}) \ge \tilde{\delta}_\epsilon \bfJ_{L_\epsilon}^\NA(\Phi^\NA_{\epsilon,m})+\bfE_{L_\epsilon}^\NA(\Phi^\NA_{\epsilon,m})\\
&=&(1-\tilde{\delta}_\epsilon)\bfE_{L_\epsilon}^\NA(\Phi^\NA_{\epsilon,m})=((1-C\epsilon)\delta_0)^{-1/n} \bfE_{L_\epsilon}^\NA(\Phi^\NA_{\epsilon,m})\\
&\ge& ((1-C\epsilon)\delta_0)^{-1/n}\bfE'^\infty_{\psi_\epsilon}(\Phi_\epsilon).
\end{eqnarray*}
The second equality uses $\sup \Phi^\NA_{\epsilon,m}=0$ (see \eqref{eq-phimcI}).
The last inequality uses \eqref{eq-ENAepmlb}. 
Taking $m\rightarrow+\infty$ and using \eqref{eq-limLNAm}, we get the inequality:
\begin{eqnarray*}
\bL^\NA_{(Y, B_\epsilon)}(\Phi^\NA_\epsilon)\ge ((1-C\epsilon)\delta_0)^{-1/n}\bfE'^\infty_{\psi_\epsilon}(\Phi_\epsilon).
\end{eqnarray*}
Now we let $\epsilon\rightarrow 0$ and use Proposition \ref{prop-limLep} and Proposition \ref{prop-E'infconv} to get:
\begin{eqnarray*}
\bL^\NA(\Phi^\NA)\ge \delta_0^{-1/n} E'^\infty(\Phi)=-\delta_0^{-1/n}>-1.
\end{eqnarray*}

But this contradicts \eqref{eq-bLsmall} when $0<\gamma<1-\delta_0^{-1/n}$. So the proof is completed.

\vskip 3mm

\noindent
Department of Mathematics, Purdue University, West Lafayette, IN 47907-2067.\\
\noindent
{\it Current Address:} 
Department of Mathematics, Rutgers University,  Piscataway, NJ 08854-8019.

\noindent
{\it E-mail address:} chi.li@rutgers.edu

\vskip 2mm

\noindent
School of Mathematical Sciences and BICMR, Peking University, Yiheyuan Road 5, Beijing, P.R.China, 100871

\noindent {\it E-mail address:} tian@math.princeton.edu

\vskip 2mm

\noindent
School of Mathematical Sciences, Zhejiang University, Zheda Road 38, Hangzhou, Zhejiang,
310027, P.R. China

\noindent {\it E-mail address:} wfmath@zju.edu.cn

\end{document}